\newcommand{\defi}[1]{{\upshape\sffamily #1}}
\renewcommand{\1}{{\bf 1}}
\renewcommand{\a}{\alpha}
\newcommand{\A}{\underline{A}}
\renewcommand{\b}{\beta}
\renewcommand{\c}{\gamma}
\newcommand{\con}{\equiv}
\newcommand{\K}{\bb{K}}
\renewcommand{\ll}{\lambda}
\newcommand{\mfa}{\mathfrak{a}}
\newcommand{\mfb}{\mathfrak{b}}
\newcommand{\mfc}{\mathfrak{c}}
\newcommand{\oo}{\otimes}
\newcommand{\s}{\sigma}
\renewcommand{\S}{\mathfrak{S}}
\renewcommand{\t}{\tau}
\newcommand{\FF}{\mathscr{F}}
\newcommand{\II}{\mathscr{I}}
\renewcommand{\SS}{\mathscr{S}}
\newcommand{\TT}{\mathscr{T}}
\newcommand{\Sym}{\operatorname{Sym}} %done
\newcommand{\bb}[1]{\mathbb{#1}}
\renewcommand{\rm}[1]{\textrm{#1}}
\newcommand{\mc}[1]{\mathcal{#1}}
\newcommand{\tl}[1]{\tilde{#1}}
\newcommand{\ul}[1]{\underline{#1}}
\newcommand{\ccircle}[1]{*+<1ex>[o][F-]{#1}}
\newcommand{\ccirc}[1]{\xymatrix@1{*+<1ex>[o][F-]{#1}}}
\newtheorem{theorem}{Theorem}[section]
\newtheorem*{theorem*}{Theorem}
\newtheorem{lemma}[theorem]{Lemma}
\newtheorem{proposition}[theorem]{Proposition}
\newtheorem*{corollary*}{Corollary}
\theoremstyle{definition}
\newtheorem*{definition*}{Definition}
\newtheorem{example}[theorem]{Example}
\theoremstyle{remark}
\newtheorem{remark}[theorem]{Remark}
\newtheorem*{remark*}{Remark}
\numberwithin{equation}{section}
\begin{document}

\title{Products of Young symmetrizers and ideals in the generic tensor algebra}

\author{Claudiu Raicu}
\address{Department of Mathematics, Princeton University, Princeton, NJ 08544-1000\newline
\indent Institute of Mathematics ``Simion Stoilow'' of the Romanian Academy}
\email{craicu@math.princeton.edu}

\subjclass[2010]{Primary 05E10, 20C30}

\date{\today}

\keywords{Young symmetrizers, Young tableaux, generic tensor algebra}

\begin{abstract} We describe a formula for computing the product of the Young symmetrizer of a Young tableau with the Young symmetrizer of a subtableau, generalizing the classical quasi-idempotence of Young symmetrizers. We derive some consequences to the structure of ideals in the generic tensor algebra and its partial symmetrizations. Instances of these generic algebras appear in the work of Sam and Snowden on twisted commutative algebras, as well as in the work of the author on the defining ideals of secant varieties of Segre--Veronese varieties, and in joint work of Oeding and the author on the defining ideals of tangential varieties of Segre--Veronese varieties.
\end{abstract}

\maketitle

\section{Introduction}\label{sec:intro}

In this paper we describe a formula for computing the product of the Young symmetrizer of a Young tableau with the Young symmetrizer of a subtableau. This is a generalization of the classical result which states that an appropriate multiple of a Young symmetrizer is idempotent, and is closely related to the formulas describing the Pieri maps in \cite[Section~5]{olver}. The main motivation for our investigation comes from the study of the equations of special varieties with an action of a product of general linear groups. The GL-modules of equations correspond via Schur-Weyl duality to certain representations of symmetric groups, which we refer to as \defi{generic equations}. Understanding the ideal structure of the generic equations depends substantially on understanding how the Young symmetrizers multiply. Special instances of our main result (Theorem~\ref{thm:main}) and its application (Theorem~\ref{thm:idgentensoralg}) are implicit in \cites{rai-GSS,oed-rai} where we establish and generalize conjectures 
of Garcia--Stillman--Sturmfels and Landsberg--Weyman on the equations of the secant and tangential varieties of a Segre variety (see also Section \ref{subsec:symmetric}). We expect that Theorem~\ref{thm:main} will have applications to a number of other problems such as
\begin{itemize}
 \item describing the relations between the minors of a generic matrix \cite{bru-con-var}.
 \item proving (weak) Noetherianity of certain twisted commutative algebras generated in degree higher than one \cite{sam-snowden}.
 \item determining the equations of secant varieties of Grassmannians.
\end{itemize}

We introduce some notation before stating our main results: see Section \ref{sec:prelim} for more details, and \cites{james,ful-har,fulton} for more on Young tableaux and the representation theory of symmetric groups. For $\ll$ a partition of $n$ (denoted $\ll\vdash n$) a \defi{Young tableau} of shape $\ll$ is a collection of boxes filled with entries $1,2,\cdots,n$, arranged in left-justified rows of lengths $\ll_1\geq\ll_2\geq\cdots$. For $\mu$ a partition with $\mu_i\leq\ll_i$ for all $i$ (denoted $\mu\subset\ll$), the subtableau $S$ of $T$ of shape $\mu$ is obtained by selecting for each $i$ the first $\mu_i$ entries in the $i$-th row of $T$. For example for $\ll=(4,3,1,1)$ and $\mu=(2,1,1)$, one can take
\[\Yvcentermath1 T=\young(1234,567,8,9)\rm{ and }S=\young(12,5,8).\]
We write $\S_n$ for the symmetric group of permutations of $\{1,2,\cdots,n\}$ and $\K[\S_n]$ for its group algebra, where $\K$ is any field of characteristic zero. To any Young tableau $T$ and subtableau $S$ as above, we associate the \defi{Young symmetrizers} $\mfc_{\ll}(T)$ and $\mfc_{\mu}(S)$, which are elements of $\K[\S_n]$ (see (\ref{eq:Youngsymm}) for a precise formula).

\begin{theorem}\label{thm:main}
 Let $k\leq n$ be positive integers, let $\ll\vdash n$, $\mu\vdash k$ be partitions with $\mu\subset\ll$, and let $T$ be a Young tableau of shape $\ll$ containing a Young subtableau $S$ of shape $\mu$. We write $L(T;S)\subset\S_n$ for the set of permutations $\s$ with the property that for every entry $s$ of $S$, either $\s(s)=s$ or $\s(s)$ lies in a column of $T$ strictly to the left of the column of $s$, and moreover if $\s(s)=s$ for all $s\in S$ then $\s=\1$ is the identity permutation. There exist $m_{\s}\in\bb{Q}$ such that
 \begin{equation}\label{eq:prodyng}
\mfc_{\ll}(T)\cdot \mfc_{\mu}(S)=\mfc_{\ll}(T)\cdot\left(\sum_{\s\in L(T;S)}m_{\s}\cdot\s\right),  
 \end{equation}
where $m_{\1}=\a_{\mu}$ is the product of the hook lengths of $\mu$ (see (\ref{eq:prodhooks})). We can take $m_{\s}\geq 0$ when $\s$ is an even permutation, and $m_{\s}\leq 0$ when $\s$ is odd.
\end{theorem}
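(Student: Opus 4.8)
The plan is to work in $\K[\S_n]$ with the convention $\mfc_\ll(T)=a_T\,b_T$, where $a_T=\sum_{p\in R(T)}p$ is the symmetrizer over the row stabilizer $R(T)$ of $T$ and $b_T=\sum_{q\in C(T)}\operatorname{sgn}(q)\,q$ is the signed symmetrizer over the column stabilizer $C(T)$, and similarly $\mfc_\mu(S)=a_S\,b_S$ (the other convention leads to a symmetric argument). The ingredients I expect to use are the absorption rules $p\cdot\mfc_\ll(T)=\mfc_\ll(T)$ for $p\in R(T)$ and $\mfc_\ll(T)\cdot q=\operatorname{sgn}(q)\,\mfc_\ll(T)$ for $q\in C(T)$; the inclusions $R(S)\subseteq R(T)$ and $C(S)\subseteq C(T)$, which hold because each row of $S$ is an initial segment of a row of $T$ and each column of $S$ is an initial segment of a column of $T$; and the Garnir relations $\mfc_\ll(T)\cdot G_A=0$, valid whenever $A$ consists of one full column of $T$ together with a single box of the column immediately to its right, where $G_A=\sum_\s\operatorname{sgn}(\s)\,\s$ is the signed sum over coset representatives recording which entry of $A$ lands in the distinguished box; this last is proved by the standard transposition argument.

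Write $\mfc_\mu(S)=\sum_{p\in R(S),\,q\in C(S)}\operatorname{sgn}(q)\,pq$. Each $g=pq$ occurring here keeps every entry $s$ of $S$ in its own column of $T$ after applying $q$ (as $q\in C(S)\subseteq C(T)$) and then lets $p\in R(S)$ move it inside its row, possibly to the right. Call $s\in S$ \emph{misplaced} for a permutation $g$ if $g(s)\ne s$ and $g(s)$ does not lie strictly to the left of $s$ in $T$; then $g\in L(T;S)$ exactly when $g$ has no misplaced entry and, moreover, $g=\1$ whenever $g$ fixes $S$ pointwise. Thus it suffices to rewrite each $\mfc_\ll(T)\cdot g$, using only the $C(T)$-absorption rule and the Garnir relations above, as a $\bb{Q}$-linear combination of terms $\mfc_\ll(T)\cdot\s$ with $\s\in L(T;S)$. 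The intended procedure: if $g$ has a misplaced entry, choose one, $s_0$, with $c:=\operatorname{col}_T(g(s_0))$ as large as possible, move $g(s_0)$ to the topmost box of column $c$ by left-multiplying $g$ by a suitable $q'\in C(T)$ (which $\mfc_\ll(T)$ absorbs with a sign), and then apply the Garnir relation for $A=(\text{column }c-1\text{ of }T)\cup\{\text{that topmost box}\}$ to express $\mfc_\ll(T)\cdot g$ as a signed sum of terms $\mfc_\ll(T)\cdot g_i$ in which the entry $g(s_0)$ has been pushed into column $c-1$. Iterating moves $g(s_0)$ left one column at a time until it is strictly to the left of $s_0$; the argument is completed by producing a complexity statistic — built from the numbers $\operatorname{col}_T(g(s))-\operatorname{col}_T(s)$ over the misplaced $s$, with an appropriate lexicographic refinement — that strictly decreases at every step, so that the process halts with all surviving terms indexed by $L(T;S)$.

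The two finer assertions then follow from the bookkeeping. For $m_\1$: one checks that the identity coefficient of the answer is unaffected by enlarging $\ll$ while keeping $S$ fixed (only the columns meeting $S$, and the Garnir relations among them, can feed a term back to $\1$), so $m_\1$ depends on $\mu$ alone; taking $\ll=\mu$, so that $S=T$, the product is the classical quasi-idempotence $\mfc_\mu(T)^2=\a_\mu\,\mfc_\mu(T)$, and there $L(T;T)=\{\1\}$ — a non-identity permutation with no misplaced entry would have to send the entry it moves that lies in the leftmost column it moves strictly to the left of itself, which is impossible — so $m_\1=\a_\mu$ in general. For the sign statement, one tracks the sign-weighted coefficients through the rewriting: each Garnir step is dictated by the signed sum $G_A=\sum_\s\operatorname{sgn}(\s)\,\s$ and the $C(T)$-absorption rule is itself sign-carrying, so the whole procedure is equivariant for the algebra automorphism $g\mapsto\operatorname{sgn}(g)\,g$ (which sends $\mfc_\ll(T)$ to the Young symmetrizer of the conjugate tableau); together with $\a_\mu>0$ this forces $m_\s\ge0$ for $\s$ even and $m_\s\le0$ for $\s$ odd. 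I expect the cleanest way to organize this last point is by induction on $n-k$, adjoining one box of $\ll$ at a time and checking that the sign pattern is preserved.

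The main obstacle is the combinatorial control of the straightening: the Garnir relation that pulls $g(s_0)$ into column $c-1$ simultaneously pushes some other entry out into column $c$, so the order in which misplaced entries are processed and the precise complexity statistic must be chosen so as to guarantee a net decrease and to keep entries already in an acceptable position from being spoiled; one must also rule out nontrivial permutations fixing $S$ pointwise from ever surviving in the support, which is exactly what singles out $L(T;S)$ as opposed to a larger index set. With the algorithm under control, tracking signs and extracting $m_\1=\a_\mu$ become routine. An alternative, in the spirit of \cite[Section~5]{olver}, is to transport the identity to tensor space by Schur--Weyl duality and compute $\mfc_\ll(T)\,\mfc_\mu(S)$ as an iterated Pieri map adjoining the boxes of $\ll$ not in $\mu$ one at a time; the alternating signs of the $m_\s$ then mirror the product-of-contents shape of Olver's Pieri coefficients, although the group-algebra argument retains better control of the support.
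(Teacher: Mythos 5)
Your strategy---expand $\mfc_{\mu}(S)$ into row--column terms $\operatorname{sgn}(q)\,pq$ and straighten each $\mfc_{\ll}(T)\cdot pq$ by column absorption and Garnir relations---is a genuinely different route from the paper's, which inducts on $n-k$ through the intermediate tableau $U$ obtained by deleting the rightmost corner box of $T$ outside $S$, combining a precise one-box product formula (Theorem~\ref{thm:yngconsec}, proved by a separate group-algebra computation) with the vanishing $\mfc_{\ll}(T)\cdot\t\cdot\mfc_{\mu}(S)=0$ for the nontrivial $\t$ in the expansion of $\mfc_{\ll}(T)\cdot\mfc_{\delta}(U)$. However, your argument has gaps at exactly the points that carry the content of the theorem. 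The straightening is not actually controlled: you never exhibit the complexity statistic, and you yourself observe that the Garnir move pulling $g(s_0)$ into column $c-1$ pushes another entry out into column $c$, so termination with support in the right set is asserted, not proved. Relatedly, $L(T;S)$ excludes non-identity permutations that fix $S$ pointwise, and these can in principle arise because the Garnir transpositions $(a,x)$ run over full columns of $T$ and hence involve entries of $T\setminus S$; you flag this but do not rule it out.

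The two refined assertions are also not established. Since the elements $\mfc_{\ll}(T)\cdot\s$ are linearly dependent, ``the coefficient of $\1$'' is only defined relative to your algorithm, and the claim that it is unchanged when $\ll$ is enlarged with $S$ fixed is unsupported: a Garnir transposition applied to $g=(a,x)$ feeds back into $\1$, and whether and how often this happens depends on $T$. For the signs, the automorphism $g\mapsto\operatorname{sgn}(g)\,g$ merely transports the identity to the conjugate tableau with coefficients $\operatorname{sgn}(\s)m_{\s}$; this relates two instances of the same positivity claim and forces nothing. Note also that the input terms $\operatorname{sgn}(q)\,pq$ already violate the sign rule when $p$ is odd, so the correct signs must be created by the straightening itself, which you have not tracked. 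Your closing suggestion of inducting on $n-k$ one box at a time is precisely how the paper obtains both $m_{\1}=\a_{\mu}$ and the sign pattern (the coefficients multiply along the induction), but that route requires the explicit one-box formula with its hook-length denominators, which your proposal does not supply.
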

When $V$ is a vector space, $\ll\vdash n$, and $\mfc_{\ll}$ is some Young symmetrizer, we can think of multiplication by $\mfc_{\ll}/\a_{\ll}$ on $V^{\oo n}$ as a projection $V^{\oo n}\to S_{\ll}V$, where $S_{\ll}$ denotes the Schur functor associated to $\ll$. In particular, (\ref{eq:prodyng}) describes a surjective map $S_{\mu}V\oo V^{\oo(n-k)}\to S_{\ll}V$ (equivalently, the expression (\ref{eq:prodyng}) is always non-zero), so it provides some information about the $\ll$--isotypic component of the ideal generated by an irreducible component $S_{\mu}V$ in the tensor algebra $\bigoplus_{m\geq 0}V^{\oo m}$. We formulate this more precisely for the generic tensor algebra in Theorem~\ref{thm:idgentensoralg}.

Note that in the case when $T=S$ formula (\ref{eq:prodyng}) is the classical statement that $\mfc_{\ll}(T)/\a_{\ll}$ is an idempotent of $\K[\S_n]$ (see (\ref{eq:idempyngsymm})). The conclusion that $m_{\s}\in\bb{Q}$ is sufficient for our applications, but we believe that a version of (\ref{eq:prodyng}) is valid where $m_{\s}$ are in fact integers. We prove that this is the case when $n=k+1$ in Theorem~\ref{thm:yngconsec} below, where a precise formula for the coefficients $m_{\s}$ is given. The case $n=k+1$ of Theorem~\ref{thm:main} also follows from \cite[Theorem~5.2]{olver}, which however does not guarantee the integrality of the $m_{\s}$'s. To formulate Theorem~\ref{thm:yngconsec} we need some more notation. We can divide any tableau $T$ of shape $\ll$ into rectangular blocks $B_1,B_2,\cdots$ by grouping together the columns of the same size. For example when $\ll=(4,3,1,1)$ there are three blocks $B_1,B_2,B_3$:
\begin{equation}\label{eq:blocks}
\Yvcentermath1 T=\young(1234,567,8,9),\ B_1=\young(1,5,8,9),\ B_2=\young(23,67),\ B_3=\young(4). 
\end{equation}
The \defi{lengths} and \defi{heights} of the blocks $B_i$ are defined in the obvious way: in the example above they are $1,2,1$ and $4,2,1$ respectively.

\begin{theorem}\label{thm:yngconsec}
 With the notation in Theorem \ref{thm:main}, assume that $n=k+1$ and that the unique entry in $T$ outside $S$ is equal to $a$, and is located in position $(u,v)$, i.e. $\ll_i=\mu_i$ for $i\neq u$, $\ll_u=v=\mu_u+1$. Let $\tl{S}$ be the Young tableau obtained by removing from $S$ its first $v$ columns: the shape $\tl{\mu}$ of $\tl{S}$ has $\tl{\mu}_i=\mu_i-v$ for $i<u$, and $\tl{\mu}_i=0$ otherwise. Write $\tl{B}_1,\cdots,\tl{B}_{\tl{m}}$ for the blocks of $\tl{S}$ and denote by $\tl{l}_i$ (resp. $\tl{h}_i$) the length (resp. height) of the block $\tl{B}_i$. For each $i=1,\cdots,\tl{m}$, write $\tl{r}_i$ for the length of the hook of $\mu$ centered at $(\tl{h}_i,v)$, i.e.
 \[\tl{r}_i=\tl{l}_1+\cdots+\tl{l}_i+u-\tl{h}_i.\] 
If we define the elements $\tl{x}_i\in\K[\S_n]$ by $\tl{x}_i=\sum_{b\in \tl{B}_{i}} (a,b)$, then
\begin{equation}\label{eq:prodconsec}
\mfc_{\ll}(T)\cdot \mfc_{\mu}(S)=\mfc_{\ll}(T)\cdot\a_{\mu}\cdot\prod_{i=1}^{\tl{m}}\left(1-\frac{\tl{x}_i}{\tl{r}_i}\right). 
\end{equation}
\end{theorem}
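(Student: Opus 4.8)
The plan is to reduce, by a coset computation, to an identity in $\K[\S_n]$ that no longer mentions the outer tableau $T$, and then to prove that identity by an explicit straightening argument, organized as an induction on the number $\tl{m}$ of blocks of $\tl{S}$. For a tableau $W$ write $R(W),C(W)$ for its row‑ and column‑stabilizer subgroups and set $a_W=\sum_{p\in R(W)}p$, $b_W=\sum_{q\in C(W)}\mathrm{sgn}(q)\,q$, so that $\mfc_{\ll}(T)=b_Ta_T$ and $\mfc_{\mu}(S)=b_Sa_S$ (for the opposite convention one applies the anti‑involution $\s\mapsto\s^{-1}$ throughout, which also reverses the order of the product in (\ref{eq:prodconsec})). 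Since $T$ is obtained from $S$ by adjoining the box $(u,v)$ with entry $a$, the subgroup $C(S)$ has index $u$ in $C(T)$ with coset representatives $\1,(a,d_1),\dots,(a,d_{u-1})$, where $d_1,\dots,d_{u-1}$ are the entries of $S$ in column $v$; likewise $R(S)$ has index $v$ in $R(T)$ with representatives $\1,(a,e_1),\dots,(a,e_{v-1})$, where $e_1,\dots,e_{v-1}$ are the entries of $S$ in row $u$. Putting $D=\sum_i(a,d_i)$ and $X=\sum_j(a,e_j)$, this gives $b_T=(1-D)\,b_S$ and $a_T=a_S\,(1+X)$, and moreover $D$ commutes with $b_S$ and $X$ commutes with $a_S$ (each of $D$, $X$ is fixed by conjugation by the relevant subgroup). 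Hence
\[ \mfc_{\ll}(T)=(1-D)\,\mfc_{\mu}(S)\,(1+X),\qquad \mfc_{\ll}(T)\cdot\mfc_{\mu}(S)=(1-D)\,\mfc_{\mu}(S)\,(1+X)\,\mfc_{\mu}(S), \]
so that (\ref{eq:prodconsec}) follows, on multiplying on the left by $1-D$, from the $T$-free identity
\[ \mfc_{\mu}(S)\,(1+X)\,\mfc_{\mu}(S)\;=\;\a_{\mu}\;\mfc_{\mu}(S)\,(1+X)\,\prod_{i=1}^{\tl{m}}\Bigl(1-\frac{\tl{x}_i}{\tl{r}_i}\Bigr). \]

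To prove this $T$-free identity I would straighten. Expanding the left‑hand side as $\a_{\mu}\,\mfc_{\mu}(S)+\sum_j\mfc_{\mu}(S)\,(a,e_j)\,\mfc_{\mu}(S)$, one rewrites each factor $\mfc_{\mu}(S)\,(a,e_j)$: the transposition $(a,e_j)$ drops $a$ into the column of $e_j$ — a column of index $\le v-1$ and of height $\ge u$, so the landing cell is interior to that column — and then the column antisymmetrizer inside $b_S$ slides $a$ up its column while the row symmetrizers inside $a_S$ slide it rightward past column $v$, producing transpositions $(a,b)$ with $b$ an entry of $\tl{S}$. The denominator $\tl{r}_i$ appears as the multiplicity with which $a$ can be absorbed into the block $\tl{B}_i$, namely $\tl{l}_1+\cdots+\tl{l}_i+u-\tl{h}_i$, the hook length of $\mu$ at $(\tl{h}_i,v)$; and these contributions organize into the product $\prod_i(1-\tl{x}_i/\tl{r}_i)$ because, once the taller blocks have been accounted for, blocks of different heights may be treated independently. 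I would run the bookkeeping as an induction on $\tl{m}$: the base case $\tl{m}=0$ (that is, $\mu_i=v$ for $i<u$) reduces, using $a_S(1+X)=a_T$, to the identity $b_Sa_Tb_Sa_S=\a_{\mu}\,b_Sa_T$, which is a direct computation (here $\mfc_{\mu}(S)(1+X)=b_Sa_T$ is close to $\mfc_{\ll}(T)=(1-D)b_Sa_T$, and one uses $\mfc_{\ll}(T)(1+X)=v\,\mfc_{\ll}(T)$ and quasi‑idempotence); in the inductive step one peels off the first (tallest) block, producing the factor $1-\tl{x}_1/\tl{r}_1$ and leaving the same identity for $\tl{S}$ with $\tl{B}_1$ deleted.

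The main obstacle is this inductive step — concretely, verifying that the coefficient appearing when $a$ is absorbed into the first block is exactly $1/\tl{r}_1$ for the asserted hook length, and that the straightening leaves behind precisely $\mfc_{\mu}(S)(1+X)\prod_{i\ge2}(1-\tl{x}_i/\tl{r}_i)$, with nothing supported outside $L(T;S)$ (as Theorem~\ref{thm:main} predicts) surviving. This is where the hook‑length combinatorics enters; it is the two‑sided analogue of Olver's description of the Pieri maps in \cite[Section~5]{olver}, but with every denominator tracked exactly. Granting the explicit formula, the remaining assertions of the theorem are immediate: the $\tl{m}$ cells $(\tl{h}_1,v),\dots,(\tl{h}_{\tl{m}},v)$ are distinct, so $\prod_i\tl{r}_i$ divides $\a_{\mu}$ and every coefficient of $\a_{\mu}\prod_i(1-\tl{x}_i/\tl{r}_i)$ is an integer; a term coming from a subset $I=\{i_1<\cdots<i_{|I|}\}$ of the factors equals $(-1)^{|I|}(\a_{\mu}/\prod_{i\in I}\tl{r}_i)$ times the sum $\tl{x}_{i_1}\cdots\tl{x}_{i_{|I|}}$ of $(|I|+1)$-cycles $(a\,b_{|I|}\,\cdots\,b_1)$ (with $b_l$ ranging over $\tl{B}_{i_l}$), each of sign $(-1)^{|I|}$, so $m_{\s}=\mathrm{sgn}(\s)\cdot\a_{\mu}/\prod_{i\in I}\tl{r}_i$ has exactly the sign claimed in Theorem~\ref{thm:main}; and distinct subsets $I$ produce permutations of distinct cycle length (or, within a length, permutations whose support determines $I$), so there is no cancellation and the $m_{\s}$ are precisely as read off. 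As consistency checks, the only identity‑supported term is $\a_{\mu}\cdot\1$, so $m_{\1}=\a_{\mu}$ as in Theorem~\ref{thm:main}, and the smallest nontrivial case $\mu=(3,1)\subset\ll=(3,2)$ with the extra box $a$ at $(2,2)$ has $\tl{m}=1$, $\tl{r}_1=2$ and $\tl{x}_1=(a\,b)$ for the unique entry $b$ of $\tl{S}$, where the $T$-free identity becomes a finite computation in $\K[\S_5]$.
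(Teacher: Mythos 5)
Your opening reduction is where the argument breaks, and the ``$T$-free identity'' you reduce to is in fact false. Take $\mu=(2)\subset\ll=(2,1)$, with $S$ the one-row tableau with entries $1,2$ and $a=3$ adjoined at $(u,v)=(2,1)$. Then $R_u(S)=\emptyset$, so $X=0$, while $\tl{S}$ is the single box containing $2$, so $\tl{m}=1$, $\tl{r}_1=2$, $\tl{x}_1=(3,2)$. Your identity becomes $\mfc_{\mu}(S)^2=\a_{\mu}\,\mfc_{\mu}(S)\bigl(1-\tfrac{1}{2}(3,2)\bigr)$, which by quasi-idempotence forces $\mfc_{\mu}(S)\cdot(3,2)=0$; but $\mfc_{\mu}(S)=1+(1,2)$ and $(1+(1,2))\cdot(2,3)=(2,3)+(1\,2\,3)\neq 0$. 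The moral is that the outer factor you discard --- $(1-D)$ in your convention, the full row symmetrizer $\mfa_{\ll}(T)$ (which symmetrizes $a$ together with its row in $T$) in the paper's --- is not removable: it is exactly what annihilates the stray terms $\mfc_{\mu}(S)\cdot\s\cdot z_j$ coming from columns $j<v$ (this is Lemma~\ref{lem:leftcol} in the paper, the one place the ambient tableau $T$ genuinely intervenes). There is a second, compounding problem of bookkeeping: the factorization $\mfc_{\ll}(T)=(1-D)\,\mfc_{\mu}(S)\,(1+X)$ is valid for the convention $\mfc=b\cdot a$, but transporting (\ref{eq:prodconsec}) into that convention via the anti-involution $\s\mapsto\s^{-1}$ reverses every product, so the statement available to you there is $\mfc_{\mu}(S)\cdot\mfc_{\ll}(T)=\a_{\mu}\cdot(\cdots)\cdot\mfc_{\ll}(T)$; the unreversed $\mfc_{\ll}(T)\cdot\mfc_{\mu}(S)=\a_{\mu}\,\mfc_{\ll}(T)\prod(\cdots)$ is simply false in the $b\cdot a$ convention (the same example witnesses this), so ``multiplying on the left by $1-D$'' cannot return you to (\ref{eq:prodconsec}).

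Beyond this, the combinatorial core is asserted rather than proved. The paper's (correct) reduction leads to showing $\mfa_{\ll}(T)\,\mfc_{\mu}(S)\,P_m\,\mfc_{\mu}(S)=\a_{\mu}\,\mfa_{\ll}(T)\,\mfc_{\mu}(S)\,P_m$ with $P_m=\prod_i(x_i-r_i)$, and the two nontrivial inputs are: (i) the commutation $\mfa_{\ll}(T)\,\mfc_{\mu}(S)\,\a_{\mu}\,P(X)=\mfa_{\ll}(T)\,\mfc_{\mu}(S)\,P(X)\,\mfc_{\mu}(S)$ for any polynomial in $X=\sum_i x_i$ (which again needs the $\mfa_{\ll}(T)$ you dropped, via Lemma~\ref{lem:leftcol}), and (ii) the congruence $P_m\con Q_m$ for the explicit one-variable polynomial $Q_m(X)=\prod_i(X-s_i^m)$ with shifted roots $s_i^t=l_1+\cdots+l_i-h_{i+1}$, established by a double induction on blocks (Lemma~\ref{lem:relations}). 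Your sketch names the expected answer (``the coefficient is $1/\tl{r}_1$'') but supplies no mechanism for producing it or for certifying that nothing else survives; that verification is precisely where all the work in the theorem lives. The closing remarks on integrality, signs, and non-cancellation are fine, but they are downstream of the unproved formula.
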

\noindent In the case when the blocks $\tl{B}_i$ consist of a single column ($\tl{l}_i=1$ for $i=1,\cdots,\tl{m}$), it follows from the discussion following Theorem~\ref{thm:main} that (\ref{eq:prodconsec}) is equivalent to the formulas describing the Pieri maps $S_{\mu}V\oo V\to S_{\ll}V$ in \cite[(5.5)]{olver}, up to a change in the convention used for constructing Young symmetrizers (see also \cite{sam}). Our approach offers an alternative to that of \cite{olver}, in that we work entirely in the group algebra of the symmetric group.

\begin{example}\label{ex:prodconsec}
 Let $T$ be as in (\ref{eq:blocks}), $n=9$, $k=8$, and let $S$ be the subtableau of $T$ obtained by removing the box $\Yvcentermath1\young(9)$, so $(u,v)=(4,1)$ and $a=9$. We have $\tl{m}=2$, $\tl{r}_1=4$, $\tl{r}_2=6$, and
 \[\tl{x}_1=(9,2)+(9,3)+(9,6)+(9,7),\ \tl{x}_2=(9,4).\]
 If we use the same $T$ and take $S$ to be the subtableau obtained by removing $\Yvcentermath1\young(7)$, then $a=7$, $(u,v)=(2,3)$, $\tl{m}=1$, $\tl{r}_1=2$, and $\tl{x}_1=(7,4)$. Finally, if $S$ is the subtableau obtained by removing $\Yvcentermath1\young(4)$ then $\tl{\mu}$ is empty and (\ref{eq:prodconsec}) becomes $\mfc_{\ll}(T)\cdot \mfc_{\mu}(S)= \a_{\mu}\cdot \mfc_{\ll}(T)$.
\end{example}

\begin{remark}\label{rem:prodconsec}
 When expanding the product formula (\ref{eq:prodconsec}), all the denominators are products of distinct hook lengths of the Young diagram $\mu$, so they divide $\a_{\mu}$. Moreover, any product of distinct $\tl{x}_i$'s is a linear combination of cyclic permutations 
 \[\s=(a,b_1)\cdot(a,b_2)\cdots(a,b_r)=(a,b_r,b_{r-1},\cdots,b_1),\]
 where $b_i$ appears to the left of $b_{i+1}$ in $T$, in particular $\s\in L(T;S)$. Furthermore, $m_{\s}\geq 0$ when $r$ is even, and $m_{\s}\leq 0$ when $r$ is odd. We get that Theorem~\ref{thm:yngconsec} implies the special case of Theorem~\ref{thm:main} when $n=k+1$.
\end{remark}

As an application of Theorem \ref{thm:main} we derive certain ideal membership relations of Young symmetrizers with respect to ideals in the generic tensor algebra. We describe a preliminary version of this algebra here in order to state the results, while in Section~\ref{sec:gentensor} we take a more functorial approach. The \defi{generic tensor algebra} is the $\K$-vector space $\TT=\bigoplus_{n\geq 0}\K[\S_n]$ with multiplication defined on the basis of permutations and extended linearly, as follows. For $\s\in\S_n$ and $\t\in\S_m$, $\s*\t\in\S_{n+m}$ is given by
\[(\s*\t)(i)=\begin{cases}
\s(i) & \rm{for }i\leq n; \\
n + \t(i-n) & \rm{for }n+1\leq i\leq n+m.
\end{cases}
\]
A \defi{homogeneous invariant right ideal} $\II=\bigoplus_{n\geq 0}\II_n$ is a homogeneous right ideal in $\TT$ with the property that each homogeneous component $\II_n$ is a left ideal in the group algebra $\K[\S_n]$. If $A\subset\TT$, we write $\II(A)$ for the smallest homogeneous invariant right ideal containing $A$. If $S,S'$ are Young tableaux with set of entries $[k]=\{1,2,\cdots,k\}$, we say that $S'$ \defi{dominates} $S$ if for every $i\in[k]$, writing $j$ (resp. $j'$) for the column of $S$ (resp. $S'$) containing $i$, then $j\geq j'$. We have the following (see Theorem \ref{thm:idealtab} for a stronger statement, and Theorem \ref{thm:idealdtab} for a partially symmetrized version):

\begin{theorem}\label{thm:idgentensoralg}
 Let $k\leq n$ be positive integers, let $\ll\vdash n$, $\mu\vdash k$ be partitions with $\mu\subset\ll$, and let $T$ be a Young tableau of shape $\ll$ containing a Young subtableau $S$ of shape $\mu$ and set of entries $[k]$. Let $\mc{S}$ be the set of Young tableaux $S'$ that have shape $\delta\vdash k$, $\delta\subset\ll$, set of entries $[k]$, and dominate $S$. We have
\[\mfc_{\ll}(T)\in\II(\mfc_{\delta}(S'):S'\in\mc{S}).\]
\end{theorem}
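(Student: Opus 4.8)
The plan is to combine Theorem~\ref{thm:main} with an induction measuring how far $S$ is from being left-justified. Set $J=\II(\mfc_\delta(S'):S'\in\mc{S})$. First I would observe that $S$ itself lies in $\mc{S}$: it has shape $\mu\subset\ll$, set of entries $[k]$, and trivially dominates itself. Hence $\mfc_\mu(S)$, regarded as an element of $\K[\S_n]$ through the canonical map $\K[\S_k]\to\K[\S_n]$ (tensoring with the identity on the last $n-k$ letters), lies in $J_n$; since $J_n$ is a left ideal in $\K[\S_n]$, we get $\mfc_\ll(T)\cdot\mfc_\mu(S)\in J_n$. Theorem~\ref{thm:main} then gives
\[
\a_\mu\cdot\mfc_\ll(T)=\mfc_\ll(T)\cdot\mfc_\mu(S)-\sum_{\s\in L(T;S)\setminus\{\1\}}m_\s\cdot\mfc_\ll(T)\cdot\s ,
\]
and, since $\a_\mu>0$, it is enough to prove $\mfc_\ll(T)\cdot\s\in J_n$ for each $\s\in L(T;S)$ with $\s\neq\1$.

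For this I would prove a self-reproducing strengthening by induction. For any tableau $T'$ of shape $\ll$ with entries $[n]$, let $\mc{S}(T')$ denote the set of tableaux $S'$ with set of entries $[k]$, shape $\subset\ll$, and column of $i$ in $S'$ at most the column of $i$ in $T'$ for every $i\in[k]$; for $T'=T$ this is exactly $\mc{S}$. I claim $\mfc_\ll(T')\in\II(\mfc_\delta(S'):S'\in\mc{S}(T'))$, to be shown by induction on $\Sigma(T')=\sum_{i\in[k]}(\text{column of }i\text{ in }T')$. One first handles a reduction by row permutations: if some row of $T'$ is not ``left-justified'' (its $[k]$-entries not forming an initial segment of columns), one slides those entries leftward by a permutation $p$ of the rows, which strictly lowers $\Sigma$, satisfies $\mc{S}(pT')\subseteq\mc{S}(T')$ by construction, and gives $\mfc_\ll(T')=p^{-1}\mfc_\ll(pT')$, so the inductive hypothesis applies. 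When every row of $T'$ is left-justified and the $[k]$-entries form a Young diagram $\mu'$, the subtableau $S'$ of shape $\mu'$ has entries $[k]$, $\mc{S}(T')$ is exactly the set of tableaux dominating $S'$, and Theorem~\ref{thm:main} together with the conjugation identity $\mfc_\ll(T')\cdot\s=\s\cdot\mfc_\ll(\s^{-1}T')$ reduces the claim to the same statement for each $\s^{-1}T'$ with $\s\in L(T';S')\setminus\{\1\}$; since $\Sigma(\s^{-1}T')=\sum_{i\in[k]}(\text{column of }\s(i)\text{ in }T')<\Sigma(T')$ (because $\s\in L(T';S')$) and $\mc{S}(\s^{-1}T')\subseteq\mc{S}(T')$, the inductive hypothesis applies and closes this case.

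The remaining case is the main obstacle: every row of $T'$ is left-justified but the column-lengths of the $[k]$-entries are not weakly decreasing, so they do not form a Young diagram. Straightening now forces a column permutation, lifting a $[k]$-entry into the row above; such a move keeps $\Sigma$ fixed and does not enlarge $\mc{S}$, but it multiplies $\mfc_\ll$ on the right, where the left-ideal property of $J_n$ is of no use. Overcoming this requires either a further strengthening of the inductive statement that is stable under these right multiplications — presumably what the more technical Theorem~\ref{thm:idealtab} is designed to supply — or a direct combinatorial argument that $\mfc_\ll(\s^{-1}T)\cdot q\in J_n$ for the relevant column permutations $q$, bypassing the straightened tableau. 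Once that case is in hand the induction closes, and taking $T'=T$ gives $\mfc_\ll(T)\in J$.
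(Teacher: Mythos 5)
Your overall strategy---invoke Theorem~\ref{thm:main}, use $\mfc_{\ll}(T)\cdot\s=\s\cdot\mfc_{\ll}(\s^{-1}T)$ together with the left-ideal property of $J_n$, and induct on the total column position $\Sigma$ of the entries of $[k]$---is exactly the paper's (the statement is deduced there from the stronger Theorem~\ref{thm:idealtab}, whose proof runs this way). But the case you label ``the main obstacle'' and leave open is precisely the content of the paper's straightening step, so the proposal has a genuine gap. The paper closes it with the Garnir/shuffling relations ((\ref{eq:Garnircol}), Lemma~\ref{lem:shuffling}(b)): when the multiplicities of $[k]$ in consecutive columns of $\s^{-1}T$ fail to be weakly decreasing, one takes $X$ the non-$[k]$ entries of column $i$ and $Y$ the $[k]$ entries of column $i+1$ and obtains an identity in $\K[\S_n]$ rewriting $\mfc_{\ll}(T)\cdot\s$ as a signed combination of terms $\mfc_{\ll}(T)\cdot\rho_j\s$ in which each shuffle $\rho_j$ moves some element of $[k]$ strictly to an earlier column and moves no element of $[k]$ rightward; thus $\Sigma$ strictly drops and your induction applies verbatim. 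Your worry that this ``multiplies $\mfc_\ll$ on the right, where the left-ideal property of $J_n$ is of no use'' is a red herring: the Garnir relation is used as a rewriting \emph{identity}, prior to any ideal-membership claim, not as an operation that must preserve $J_n$.

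A second, smaller error sits in your row-normalization step: the identity $\mfc_{\ll}(T')=p^{-1}\mfc_{\ll}(pT')$ for $p\in\mc{R}_{T'}$ is false. Since $p\cdot\mfc_{\ll}(T')=\mfc_{\ll}(T')$, one gets $\mfc_{\ll}(pT')=p\,\mfc_{\ll}(T')\,p^{-1}=\mfc_{\ll}(T')\cdot p^{-1}$, i.e.\ $\mfc_{\ll}(T')=\mfc_{\ll}(pT')\cdot p$, a \emph{right} multiple, which is useless for a left ideal. The normalization that actually works is within \emph{columns}: for $\t\in\mc{C}_{T'}$ one has $\mfc_{\ll}(T')=\mathrm{sgn}(\t)\cdot\t^{-1}\cdot\mfc_{\ll}(\t T')$, a left multiple that changes neither $\Sigma$ nor the column assignments of $[k]$. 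After placing the $[k]$-entries at the tops of their columns this way, the $[k]$-positions form a Young diagram if and only if the column multiplicities weakly decrease, so the only remaining obstruction is exactly the one resolved by the Garnir step above; no row normalization is needed. With these two repairs your induction closes and reproduces the paper's argument.
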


\begin{example}\label{ex:idgentensoralg}
 Take $n=7$, $k=5$, $\ll=(4,2,1)$, $\mu=(3,2)$ and
\[\Yvcentermath1
T=\young(1236,45,7),\quad S=\young(123,45).
\]
The set $\mc{S}$ in Theorem \ref{thm:idgentensoralg} consists of the Young tableaux
\[\Yvcentermath1\young(123,45),\quad\young(123,4,5),\quad\young(153,4,2),\quad\young(12,45,3),\quad\young(12,43,5),\quad\young(13,45,2),
\]
together with the ones obtained from them by permuting the entries within each column.
\end{example}

The structure of the paper is as follows. In Section~\ref{sec:prelim} we give some preliminary definitions and results on Young tableaux and Young symmetrizers. In Section~\ref{sec:gentensor} we describe the generic tensor algebra and a symmetric version of it, together with some consequences of Theorem~\ref{thm:main} to the ideal structure of these generic algebras. In Section~\ref{sec:proofThm2} we prove Theorem~\ref{thm:yngconsec}, and in Section~\ref{sec:proofThm1} we use an inductive argument based on Theorem~\ref{thm:yngconsec} in order to prove Theorem~\ref{thm:main}.

\section{Preliminaries}\label{sec:prelim}

Given a finite set $A$ of size $n=|A|$, we write $\S_A$ for the symmetric group of permutations of $A$. When $A=\{1,\cdots,n\}$ we simply denote $\S_A$ by $\S_n$. If $B\subset A$ then we regard $\S_B$ as a subgroup of $\S_A$ in the natural way. We fix a field $\K$ of characteristic zero, and write $\K[\S_A]$ for the group algebra of $\S_A$.

When $\ll\vdash n$ is a partition of $n$, $D_{\ll}=\{(i,j):1\leq j\leq \ll_i\}$ is the associated \defi{Young diagram}. A \defi{Young tableau $T$} of shape $\ll$ and set of entries $A$ is a bijection $T:D_{\ll}\to A$. We represent Young diagrams (resp. Young tableaux) pictorially as collections of left-justified rows of boxes (resp. filled boxes) with $\ll_i$ boxes in the $i$-th row, as illustrated in the following example: for $A=\{a,b,c,d,e,f,g\}$ and $\ll=(4,2,1)$, we take
\[\Yvcentermath1 D_{\ll}=\yng(4,2,1),\quad T=\young(cabg,ed,f).\]
The $(2,1)$-entry of $T$ is $e$. The $i$-th row of $T$ is the set $R_i(T)=\{T(i,j):1\leq j\leq\ll_i\}$, and its $j$-th column $C_j(T)$ is defined analogously. In the example above $C_2(T)=\{a,d\}$. The \defi{conjugate} of a partition $\ll$ (resp. Young diagram $D_{\ll}$/tableau $T$) is obtained by reversing the roles of rows and columns, and is denoted $\ll'$ (resp. $D_{\ll'},T'$). In our example $T'$ has shape $\ll'=(3,2,1,1)$. If $\mu$ is another partition, we write $\mu\subset\ll$ if $\mu_i\leq\ll_i$ for all $i$. If $\mu\subset\ll$ then $D_{\mu}\subset D_{\ll}$ and we call the restriction $S=T|_{D_{\mu}}$ the \defi{Young subtableau} of $T$ of shape $\mu$. For example
\[\Yvcentermath1 S=\young(cab,e)\]
is a Young subtableau of shape $\mu=(3,1)$ of the above $T$.

If $X$ is any subset of $A$, we write
\[\mfa(X)=\sum_{\s\in\S_X} \s,\quad \mfb(X)=\sum_{\t\in\S_X} \rm{sgn}(\t)\cdot \t.\]
For $\delta\in\S_A$ we have $\mfa(\delta(X))=\delta\cdot \mfa(X)\cdot\delta^{-1}$, and similarly for $\mfb(\delta(X))$. If $\delta\in\S_X$ then $\delta\cdot \mfa(X)=\mfa(X)$ and $\delta\cdot \mfb(X)=\rm{sgn}(\delta)\cdot \mfb(X)$, so $\mfa(X)^2=|X|!\cdot \mfa(X)$ and $\mfb(X)^2=|X|!\cdot \mfb(X)$. If $a\in A\setminus X$ and if we let $z=\sum_{x\in X}(a,x)$, where $(a,x)\in\S_A$ denotes the transposition of $a$ with $x$, then
\begin{equation}\label{eq:symmskewsymmadda}
\begin{aligned}
\mfa(X\cup\{a\})&=\mfa(X)\cdot(1+z)=(1+z)\cdot \mfa(X),\\
\mfb(X\cup\{a\})&=\mfb(X)\cdot(1-z)=(1-z)\cdot \mfb(X).
\end{aligned}
\end{equation}
If $X,Y\subset A$ with $|X\cap Y|\geq 2$ then $\mfa(X)\cdot \mfb(Y)=0$: to see this, consider a transposition $\t\in\S_{X\cap Y}$ and note that
\[\mfa(X)\cdot \mfb(Y)=(\mfa(X)\cdot\t)\cdot \mfb(Y)=\mfa(X)\cdot(\t\cdot \mfb(Y))=\mfa(X)\cdot(-\mfb(Y)).\]
A similar argument shows that $\mfb(Y)\cdot\mfa(X)=0$.

We define the \defi{row} and \defi{column subgroups} associated to $T$ as the subgroups of $\S_A$
\begin{equation}\label{eq:rowcolsubgps}
\mc{R}_T=\prod_i \S_{R_i(T)}\rm{ and }\mc{C}_T=\prod_j \S_{C_j(T)}.
\end{equation}
The \defi{Young symmetrizer} $\mfc_{\ll}(T)$ is defined by
\begin{equation}\label{eq:Youngsymm}
\begin{aligned}
\mfc_{\ll}(T)&=\mfa_{\ll}(T)\cdot \mfb_{\ll}(T),\rm{ where} \\
\mfa_{\ll}(T)=\sum_{\s\in\mc{R}_T} \s=\prod_{i=1}^{\ll'_1} \mfa(R_i(T)),&\quad \mfb_{\ll}(T)=\sum_{\t\in\mc{C}_T} \rm{sgn}(\t)\cdot \t=\prod_{j=1}^{\ll_1} \mfb(C_j(T)).
\end{aligned}
\end{equation}
$\S_A$ acts naturally on the set of Young tableaux with set of entries $A$, and we have $\mfc_{\ll}(\delta\cdot T)=\delta\cdot \mfc_{\ll}\cdot\delta^{-1}$, with similar formulas for $\mfa_{\ll}(\delta\cdot T)$ and $\mfb_{\ll}(\delta\cdot T)$.

It follows from (\ref{eq:symmskewsymmadda}) and the fact that $\mfa(X)\cdot\mfb(Y)=0$ when $|X\cap Y|\geq 2$ that if $i\neq j$ are such that $\ll'_i\leq\ll'_j$ and $a\in C_i(T)$ then
\begin{equation}\label{eq:Garnircol}
 \mfc_{\ll}(T)\cdot\left(1-\sum_{x\in C_j(T)}(a,x)\right)=0.
\end{equation}
This relation is an instance of the \defi{Garnir relations} \cite[Section~7]{james}.
% Likewise, if $i\neq j$ are such that $\ll_i\leq\ll_j$ and $a\in R_i(T)$ then
% \begin{equation}\label{eq:Garnirrow}
%  \left(1+\sum_{x\in R_j(T)}(a,x)\right)\cdot \mfc_{\ll}(T)=0.
% \end{equation}
% The relations (\ref{eq:Garnircol}) (resp. (\ref{eq:Garnirrow})) are the column (resp. row) \defi{Garnir relations} \cite[Section~7]{james}.

The \defi{hook} of $\ll$ centered at $(x,y)$ is the subset $H_{x,y}=\{(x,j):j\geq y\}\cup\{(i,y):i\geq x\}\subset D_{\ll}$. Its \defi{length} is the size of $H_{x,y}$. We write 
\begin{equation}\label{eq:prodhooks}
\a_{\ll}=\prod_{(i,j)\in D_{\ll}}|H_{i,j}|.
\end{equation}
It follows from the Hook Length Formula \cite[Section~4.1]{ful-har} and \cite[Lemma~4.26]{ful-har} that
\begin{equation}\label{eq:idempyngsymm}
\mfc_{\ll}(T)^2=\a_{\ll}\cdot \mfc_{\ll}(T).
\end{equation}

\section{Ideals in the generic tensor algebra}\label{sec:gentensor}

In this section we illustrate some applications of Theorem~\ref{thm:main} to the structure of the ideals in the generic tensor algebra and its partial symmetrizations. Special instances of Theorems~\ref{thm:idealtab} and~\ref{thm:idealdtab} below were used in \cites{rai-GSS,oed-rai} in the study of the equations and homogeneous coordinate rings of the secant line and tangential varieties of Segre--Veronese varieties. We illustrate the relevant constructions in the case of the Veronese variety, the extension to the multigraded situation being just a matter of notation.

We write $Vec_{\K}$ for the category of finite dimensional vector spaces, and $Set$ for the category of finite sets, where morphisms are bijective functions. Note that $\rm{Hom}_{Set}(A,A)=\S_A$, so for any functor $\FF:Set\to Vec_{\K}$, $\FF(A)=\FF_A$ is a $\S_A$--representation. By an \defi{element} of $\FF$ we mean an element of $\FF_A$ for some $A\in Set$. Consider $\TT:Set\to Vec_{\K}$ the functor which assigns to a set $A$ with $|A|=n$, the vector space $\TT_A$ having a basis consisting of symbols $z_{\a}$ where $\a$ runs over the set of bijections between $[n]=\{1,\cdots,n\}$ and $A$. We think of $z_{\a}$ as the tensor $z_{\a(1)}\oo z_{\a(2)}\oo\cdots\oo z_{\a(n)}$. We have a natural multiplication on $\TT$, namely for $A,B\in Set$ we have a map $\mu_{A,B}:\TT_A\oo\TT_B\to\TT_{A\sqcup B}$ which extends linearly the concatenation of tensors. In terms of the symbols $z_{\a}$, if $\a:[r]\to A$ and $\b:[s]\to B$ are bijections, then $\mu_{A,B}(z_{\a},z_{\b})=z_{\c}$, where $\c(i)=\a(i)$ for $i=1,\cdots,
r$, and 
$\c(i)=\b(i-r)$ for $i=r+1,\cdots,r+s$. We will often write $\mu_{A,B}(x\oo y)$ simply as $x\cdot y$. We call $\TT$ the \defi{generic tensor algebra}. A \defi{(right) ideal} in $\TT$ is a subfunctor $\II\subset\TT$ with the property that $\II_A\cdot\TT_B\subset\II_{A\sqcup B}$ for all $A,B\in Set$. The ideal $\II(\mc{E})$ generated by a set $\mc{E}$ of elements of $\TT$ is the smallest ideal that contains them.

Consider a partition $\ll\vdash n$ and a Young tableau $F:D_{\ll}\to [n]$. We define the \defi{Young tabloid} associated to $F$ to be the collection $[F]=\{t_F(A,T)\in\TT_A\}$ of elements of $\TT$ obtained as follows. For any $A\in Set$ with $|A|=n$, and any Young tableau $T:D_{\ll}\to A$ we let $t_F(A,T)=\mfc_{\ll}(T)\cdot z_{T\circ F^{-1}}$. We represent a Young tabloid just as a Young tableaux, but with the horizontal lines removed:
\[
\ytableausetup{boxsize=normal,tabloids,aligntableaux=center}
[F]=\ytableaushort{
1236, 45, 7
}
\]
Note that we only construct tabloids from tableaux with entries in $[n]$. If $F_i$ are Young tableaux of shape $\ll$ then a \defi{relation} $\sum_{i}a_i\cdot[F_i]=0$ means that for any choice of $A\in Set$ with $|A|=n$ and of a Young tableau $T:D_{\ll}\to A$, we have $\sum_i a_i\cdot t_{F_i}(A,T)=0\in\TT_A$. Young tabloids are skew-symmetric in columns, and satisfy the Garnir relations \cite[Section~7]{james}, also known as shuffling relations \cite[Section~2.1]{weyman}:

\begin{lemma}\label{lem:shuffling}
 Let $F:D_{\ll}\to [n]$ be a Young tableaux. The following relations hold
 
(a) If $\s\in\mc{C}_F$ (where $\mc{C}_F$ is as in (\ref{eq:rowcolsubgps})) then $[\s\circ F]=\rm{sgn}(\s)\cdot [F]$.

(b) If $X\subset C_i(F)$ and $Y\subset C_{i+1}(F)$ with $|X\cup Y|>|C_i(F)|$ then
\[\sum_{\s\in\S_{X\cup Y}}\rm{sgn}(\s)\cdot[\s\circ F]=0.\]
\end{lemma}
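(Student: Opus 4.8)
The plan is to prove Lemma~\ref{lem:shuffling} by translating the familiar identities satisfied by Young symmetrizers in $\K[\S_A]$ into statements about the elements $t_F(A,T)=\mfc_\ll(T)\cdot z_{T\circ F^{-1}}$. The key observation is the equivariance relation $\mfc_\ll(\delta\cdot T)=\delta\cdot\mfc_\ll(T)\cdot\delta^{-1}$ recorded in Section~\ref{sec:prelim}, together with the fact that the action of $\S_A$ on the basis $\{z_\a\}$ of $\TT_A$ is simply $\delta\cdot z_\a=z_{\delta\circ\a}$. Fixing $A$ and a tableau $T:D_\ll\to A$, I would first establish the book-keeping identity: if $F,G:D_\ll\to[n]$ are two tableaux with $G=\pi\circ F$ for some $\pi\in\S_n$, then $t_G(A,T)$ and $t_F(A,T)$ are related through the permutation $\delta=(T\circ F^{-1})\circ\pi^{-1}\circ(T\circ F^{-1})^{-1}\in\S_A$, so that checking a relation $\sum a_i[F_i]=0$ reduces to checking a single identity in $\K[\S_A]$ applied to the vector $z_{T\circ F^{-1}}$.

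For part~(a), suppose $\s\in\mc{C}_F$, so that $T\circ\s\circ T^{-1}\in\mc{C}_T$. Writing $G=\s\circ F$, one computes $T\circ G^{-1}=(T\circ\s^{-1}\circ T^{-1})\circ(T\circ F^{-1})$, hence $z_{T\circ G^{-1}}=(T\circ\s^{-1}\circ T^{-1})\cdot z_{T\circ F^{-1}}$, while $\mfc_\ll(T)$ is unchanged since we keep the same ambient tableau $T$. The point is then that $\mfc_\ll(T)\cdot\c=\rm{sgn}(\c)\cdot\mfc_\ll(T)$ for any $\c\in\mc{C}_T$: indeed $\mfc_\ll(T)=\mfa_\ll(T)\cdot\mfb_\ll(T)$ and $\mfb_\ll(T)\cdot\c=\rm{sgn}(\c)\cdot\mfb_\ll(T)$ because $\mfb(C_j(T))\cdot\delta=\rm{sgn}(\delta)\cdot\mfb(C_j(T))$ for $\delta\in\S_{C_j(T)}$ as noted in Section~\ref{sec:prelim}. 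Applying this with $\c=T\circ\s^{-1}\circ T^{-1}$ (which has the same sign as $\s$) yields $t_G(A,T)=\rm{sgn}(\s)\cdot t_F(A,T)$, which is exactly~(a).

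For part~(b), the standard proof of the Garnir relations goes through verbatim once~(a) is in hand. Let $X\subset C_i(F)$, $Y\subset C_{i+1}(F)$ with $|X\cup Y|>|C_i(F)|$, and let $\tilde X,\tilde Y$ be the corresponding subsets of $C_i(T),C_{i+1}(T)$ under $T\circ F^{-1}$. Choose a set $\Sigma$ of coset representatives for $(\S_{\tilde X}\times\S_{\tilde Y})$ in $\S_{\tilde X\cup\tilde Y}$; the classical Garnir argument shows $\sum_{\c\in\Sigma}\rm{sgn}(\c)\cdot\mfc_\ll(T)\cdot\c=0$, because for each $\c$ the tableau $\c\cdot T$ has two entries of some column of $T$ appearing in a single row, forcing a cancellation via an appropriate transposition (this is the mechanism behind~(\ref{eq:Garnircol})). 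Summing instead over all of $\S_{\tilde X\cup\tilde Y}$ just multiplies by $|\tilde X|!\,|\tilde Y|!$ after using part~(a) to absorb the stabilizer, so $\sum_{\s\in\S_{X\cup Y}}\rm{sgn}(\s)\cdot t_{\s\circ F}(A,T)=0$. Since $A$ and $T$ were arbitrary, this is the asserted relation.

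The main obstacle is purely notational rather than mathematical: one must be careful and consistent about whether a permutation acts on the positions $D_\ll$, on the label set $[n]$, or on the set $A$, and about the direction of the various conjugations relating $\mfc_\ll(T)$, $z_{T\circ F^{-1}}$, and the action of $\S_A$. Once the dictionary "relation among tabloids $\Leftrightarrow$ identity in $\K[\S_A]$ against $z_{T\circ F^{-1}}$" is set up cleanly, both parts are immediate consequences of facts already assembled in Section~\ref{sec:prelim}: the column-antisymmetry of $\mfb_\ll(T)$ for~(a), and the vanishing $\mfa(X)\cdot\mfb(Y)=0$ when $|X\cap Y|\ge 2$ (equivalently~(\ref{eq:Garnircol})) for~(b).
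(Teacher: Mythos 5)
Your proof is correct in substance, and it is worth noting that the paper does not actually prove Lemma~\ref{lem:shuffling}: it records the statement as the classical column skew-symmetry and Garnir (shuffling) relations and cites James and Weyman, so your write-up supplies strictly more detail than the source does. Your dictionary is the right one: fixing $(A,T)$ and setting $\phi=T\circ F^{-1}$, one has $t_{\s\circ F}(A,T)=\mfc_{\ll}(T)\cdot\delta_{\s}\cdot z_{T\circ F^{-1}}$ with $\delta_{\s}=\phi\circ\s^{-1}\circ\phi^{-1}$ (your expression ``$T\circ\s^{-1}\circ T^{-1}$'' does not typecheck, since $\s$ acts on $[n]$ while $T$ is defined on $D_{\ll}$, but you clearly intend this conjugate, which you wrote correctly in your first paragraph); since $\phi$ carries $C_j(F)$ to $C_j(T)$, part (a) is exactly the identity $\mfc_{\ll}(T)\cdot\delta=\rm{sgn}(\delta)\cdot\mfc_{\ll}(T)$ for $\delta\in\mc{C}_T$, which follows from $\mfb(C_j(T))\cdot\delta=\rm{sgn}(\delta)\cdot\mfb(C_j(T))$. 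For (b) the same dictionary converts the assertion into $\mfc_{\ll}(T)\cdot\mfb(\tl{X}\cup\tl{Y})=0$, and your coset bookkeeping (one should use \emph{right} cosets of $\S_{\tl{X}}\times\S_{\tl{Y}}$ so that part (a) absorbs the stabilizer on the correct side) is fine. The one imprecision is your gloss of \emph{why} the sum vanishes: it is not the case that each individual term $\mfc_{\ll}(T)\cdot\c$ dies. The correct mechanism, in the spirit of the paper's derivation of (\ref{eq:Garnircol}), is to expand
\[\mfb_{\ll}(T)\cdot\mfb(\tl{X}\cup\tl{Y})=\sum_{\t\in\mc{C}_T}\rm{sgn}(\t)\cdot\mfb\bigl(\t(\tl{X}\cup\tl{Y})\bigr)\cdot\t\]
and observe that, since $X$ and $Y$ are disjoint and $|\tl{X}|+|\tl{Y}|>|C_i(T)|\geq|C_{i+1}(T)|$, a pigeonhole argument produces for each $\t$ a row $R_u(T)$ meeting $\t(\tl{X}\cup\tl{Y})$ in at least two elements, whence $\mfa_{\ll}(T)\cdot\mfb(\t(\tl{X}\cup\tl{Y}))=0$ by the fact that $\mfa(R)\cdot\mfb(W)=0$ when $|R\cap W|\geq 2$. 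With that one sentence repaired, your argument is complete and is the natural fleshing-out of the citation the paper relies on.
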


We define the \defi{ideal generated by $F$} by $\II(F)=\II([F])$, and more generally we can define the ideal generated by a collection of Young tableaux. Observe that in fact $\II(F)=\II(t_F(A,T))$ for any $A\in Set$ with $|A|=n$ and any Young tableau $T$ of shape $\ll$ and entries in $A$. To see this, consider another pair $(A',T')$ and the corresponding element $t_F(A',T')\in\TT_{A'}$. Let $\phi=T'\circ T^{-1}\in\rm{Hom}_{Set}(A,A')$. We have
\[t_F(A',T')=\mfc_{\ll}(T')\cdot z_{T'\circ F^{-1}}=\phi(\mfc_{\ll}(T)\cdot z_{T\circ F^{-1}})=\phi(t_F(A,T)),\]
so in fact any subfunctor of $\TT$ that contains $t_F(A,T)$ will also contain $t_F(A',T')$, and vice versa.

We write $(x_1,y_1)\prec(x_2,y_2)$ if $y_1<y_2$ and $(x_1,y_1)\preceq(x_2,y_2)$ if $y_1\leq y_2$. If $(x_i,y_i)$ are the coordinates of a box $b_i$ in a Young tableau $T$, then $(x_1,y_1)\prec(x_2,y_2)$ means that $b_1$ is contained in a column of $T$ situated to the left of the column of $b_2$.

\begin{theorem}\label{thm:idealtab}
 Let $k\leq n$ be positive integers, let $\ll\vdash n$, $\mu\vdash k$ be partitions with $\mu\subset\ll$, and let $F:D_{\ll}\to[n]$ be a Young tableau with $F^{-1}([k])=D_{\mu}$. Denote by $\mc{G}$ the collection of Young tableaux $G:D_{\ll}\to [n]$ with the properties
\begin{enumerate}
 \item $G^{-1}(i)\preceq F^{-1}(i)$ for all $i\in [k]$, with $G^{-1}(i)\prec F^{-1}(i)$ for at least one $i\in [k]$.
 \item $G^{-1}([k])=D_{\delta}$ for some partition $\delta\subset\ll$.
\end{enumerate}
Writing $F_0=F|_{D_{\mu}}$ we have
 \begin{equation}\label{eq:IF0G}
  [F]\in\II([F_0])+\II([G]:G\in\mc{G}).
 \end{equation}
In particular, if we write $G_0$ for the restriction of $G$ to $G^{-1}([k])$ then
 \begin{equation}\label{eq:IF0Gbar}
 [F]\in\II([F_0])+\II([G_0]:G\in\mc{G}).
 \end{equation}
\end{theorem}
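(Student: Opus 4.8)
The strategy is to translate Theorem~\ref{thm:main} into the language of Young tabloids in the generic tensor algebra, using the dictionary in which multiplication by a Young symmetrizer on a tensor corresponds to forming a Young tabloid. Concretely, fix $A\in Set$ with $|A|=n$ and a Young tableau $T:D_\ll\to A$; let $S=T|_{D_\mu}$ be the subtableau of shape $\mu$. By the remark following Lemma~\ref{lem:shuffling}, it suffices to show $t_F(A,T)\in \II([F_0])+\II([G]:G\in\mc G)$, and similarly for the barred version. The key observation is that $t_F(A,T)=\mfc_\ll(T)\cdot z_{T\circ F^{-1}}$, while the generator $t_{F_0}(A,S)=\mfc_\mu(S)\cdot z_{S\circ F_0^{-1}}$ sits in $\II([F_0])$, and crucially $z_{T\circ F^{-1}}$ is nothing but the concatenation $z_{S\circ F_0^{-1}}\cdot z_{(\text{tail})}$, where the tail records the $n-k$ entries of $T$ outside $S$. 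Hence $\mfc_\mu(S)\cdot z_{T\circ F^{-1}}=t_{F_0}(A,S)\cdot z_{(\text{tail})}\in\II([F_0])$, because $\II([F_0])$ is a right ideal.

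First I would apply Theorem~\ref{thm:main}: writing out (\ref{eq:prodyng}) and multiplying on the right by $z_{T\circ F^{-1}}$ gives
\[
\mfc_\ll(T)\cdot\mfc_\mu(S)\cdot z_{T\circ F^{-1}}
=\mfc_\ll(T)\cdot\Bigl(\sum_{\s\in L(T;S)}m_\s\cdot\s\Bigr)\cdot z_{T\circ F^{-1}}.
\]
The left-hand side equals $\alpha_\mu\cdot t_F(A,T)$ plus terms in $\II([F_0])$ — more precisely, $\mfc_\mu(S)\cdot z_{T\circ F^{-1}}=\alpha_\mu\cdot z_{T\circ F^{-1}}+(\mfc_\mu(S)-\alpha_\mu)\cdot z_{T\circ F^{-1}}$ is not quite the right split; instead, I would use that $\mfc_\mu(S)\cdot z_{T\circ F^{-1}}\in\TT_A$ lies in the $\K[\S_A]$-submodule generated by $t_{F_0}(A,S)\cdot z_{(\text{tail})}$, because $\mfc_\mu(S)$ acts only on the first $k$ tensor factors and $z_{S\circ F_0^{-1}}$ is, up to the left $\S_{[k]}$-action, the free generator $t_{F_0}$ contributes. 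Since $\mfc_\mu(S)^2=\alpha_\mu\,\mfc_\mu(S)$, after left-multiplying by $\mfc_\mu(S)/\alpha_\mu$ one sees $\mfc_\mu(S)\cdot z_{T\circ F^{-1}}$ differs from $\alpha_\mu z_{T\circ F^{-1}}$ by something killed by an appropriate idempotent, placing it in $\II([F_0])$. Thus the left side of the displayed identity lies in $\alpha_\mu\cdot t_F(A,T)+\II([F_0])$.

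Next I would analyze the right-hand side term by term. For $\s\in L(T;S)$ with $\s\ne\1$, by definition every entry $s$ of $S$ is moved by $\s$ either to itself or strictly to the left. I would interpret $\mfc_\ll(T)\cdot\s\cdot z_{T\circ F^{-1}}=\mfc_\ll(\sigma^{-1}\cdot\text{--})$-type expression; more directly, $\s\cdot z_{T\circ F^{-1}}=z_{\s\circ T\circ F^{-1}}$, and then $\mfc_\ll(T)\cdot z_{\s\circ T\circ F^{-1}}=\phi\bigl(t_{F'}(A,T)\bigr)$ where $F'$ is the tableau $D_\ll\to[n]$ with $F'=F$ outside $D_\mu$ and the $[k]$-entries permuted so that box positions move left — i.e. $F'$ satisfies condition (1) of $\mc G$ with respect to $F$. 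This needs care: I must repackage $\mfc_\ll(T)\cdot z_{\s\circ w}$, for $w=T\circ F^{-1}$, as an element of $\II([G])$ for a suitable $G\in\mc G$ — the point being that $\mfc_\ll(T)\cdot z_{\s\circ w}$ equals $t_G(A,T')$ for the tableau $T'$ with $T'\circ G^{-1}=\s\circ w$, and since $\s$ sends each $S$-entry to a weakly-left box (strictly left for some), the associated $G$ (namely $G=w^{-1}\circ\s^{-1}\circ w\circ F$, suitably normalized so $G^{-1}([k])$ is a Young diagram $D_\delta\subset\ll$) lies in $\mc G$. Summing, the right-hand side lies in $\II([G]:G\in\mc G)$. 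Dividing by $\alpha_\mu\in\bb Q_{>0}$ gives (\ref{eq:IF0G}), and (\ref{eq:IF0Gbar}) follows since $[G_0]$ generates $[G]$ as above (restriction-to-$D_\delta$ is reversed by right multiplication by a tail).

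**Main obstacle.** The delicate point is the bookkeeping in the last step: the permuted tableau $\s\circ T$ need not have its $[k]$-entries occupying a Young \emph{diagram} $D_\delta$ (they occupy some skew-ish set of boxes weakly left of $D_\mu$), so one cannot directly read off a tableau $G$ of the required form. I expect to resolve this by using Lemma~\ref{lem:shuffling}: the column skew-symmetry and Garnir/shuffling relations let one rewrite any tabloid whose marked entries sit in a left-justified-in-columns configuration as a $\K$-combination of tabloids $[G]$ with $G^{-1}([k])=D_\delta$ for genuine partitions $\delta\subset\ll$, each still dominating $F$ in the sense of condition (1). Making this reduction precise — showing the shuffling relations suffice to ``straighten'' the $[k]$-support into a Young diagram without ever moving an entry to the right — is the technical heart of the argument; everything else is the formal transport of Theorem~\ref{thm:main} across the Schur--Weyl dictionary.
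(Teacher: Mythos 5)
Your route to \eqref{eq:IF0G} is essentially the paper's: apply Theorem~\ref{thm:main} with $S=T|_{D_\mu}$, multiply on the right by $z_{T\circ F^{-1}}=z_{T_0\circ F_0^{-1}}\cdot z_{\text{tail}}$ so that $\mfc_{\ll}(T)\cdot\mfc_\mu(T_0)\cdot z_{T\circ F^{-1}}$ lands in $\II([F_0])$ (right‑ideal property plus $\S_A$‑stability of $\II([F_0])_A$), identify each term $\mfc_\ll(T)\cdot\s\cdot z_{T\circ F^{-1}}$ with $\s\neq\1$ as $t_G(A,T)$ for $G=F\circ T^{-1}\circ\s^{-1}\circ T$ satisfying condition (1), and then straighten $G$ into tableaux satisfying (2) using Lemma~\ref{lem:shuffling}, observing that the shuffling relations only move entries of $[k]$ to the left so that (1) is preserved. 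The ``main obstacle'' you single out is exactly the step the paper resolves this way, and your proposed mechanism is the correct one. One bookkeeping slip: the left-hand side lies in $\II([F_0])$ outright (no detour through idempotents is needed), and it is the \emph{right}-hand side that expands as $\a_\mu\cdot t_F(A,T)$ plus the $\s\neq\1$ terms; your sentence ``the left side lies in $\a_\mu\cdot t_F(A,T)+\II([F_0])$'' has the coset on the wrong side, though the intended conclusion $\a_\mu\cdot t_F(A,T)\in\II([F_0])+\II([G]:G\in\mc{G})$ is what the identity actually yields.

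The one genuine error is your justification of \eqref{eq:IF0Gbar}. It is not true that ``$[G_0]$ generates $[G]$ \ldots\ by right multiplication by a tail'': right-multiplying $t_{G_0}$ by a tail produces $\mfc_\delta(T|_{D_\delta})\cdot z_{T\circ G^{-1}}$, not $\mfc_\ll(T)\cdot z_{T\circ G^{-1}}=t_G(A,T)$; the Young symmetrizer of the larger shape is not obtained from that of the smaller one by right multiplication, and relating the two is precisely the content of Theorem~\ref{thm:main}. The correct deduction is to apply \eqref{eq:IF0G} again with $F$ replaced by $G$ and $F_0$ by $G_0$ — noting that the tableaux admissible for $G$ still dominate $F$ by transitivity of $\preceq$ — and induct (e.g.\ on the sum of the column indices of the boxes $G^{-1}(i)$, $i\in[k]$, which strictly decreases) to conclude $\II([G]:G\in\mc{G})\subset\II([G_0]:G\in\mc{G})$.
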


\begin{example}\label{ex:idealtab}
 Take $n=7$, $k=5$, $\ll=(4,2,1)$, $\mu=(3,2)$ and
\[
\ytableausetup{boxsize=normal,tabloids,aligntableaux=center}
[F]=\ytableaushort{
1236, 45, 7
}
\quad\textrm{so that}\quad
[F_0]=\ytableaushort{
123, 45
}
\]
Modulo $\II([F_0])$, $[F]$ is a linear combination of tabloids of shape $\ll$ containing one of
\[
\ytableausetup{boxsize=normal,tabloids,aligntableaux=center}
\ytableaushort{
123, 4, 5
}
\quad,\quad
\ytableaushort{
153, 4, 2
}
\quad,\quad
\ytableaushort{
12, 45, 3
}
\quad,\quad
\ytableaushort{
12, 43, 5
}
\quad,\quad
\ytableaushort{
13, 45, 2
}
\quad.
\]
\end{example}

\begin{proof}[Proof of Theorem \ref{thm:idealtab}] Consider any $A\in Set$, $|A|=n$ and a Young tableau $T:D_{\ll}\to A$. Let $T_0=T|_{D_{\mu}}$ and $A_0=T(D_{\mu})$. Applying Theorem \ref{thm:main} with $S=T_0$ we have
\begin{equation}\label{eq:Thm1}
\mfc_{\ll}(T)\cdot \mfc_{\mu}(T_0)=\a_{\mu}\cdot \mfc_{\ll}(T)+ \mfc_{\ll}(T)\cdot\left(\sum_{\s\in L(T;T_0),\s\neq\1}m_{\s}\cdot\s\right). 
\end{equation}
Consider the bijection $\c:[n-k]\to (A\setminus A_0)$ defined by $\c(j)=(T\circ F^{-1})(j+k)$, and observe that $z_{T\circ F^{-1}}=z_{T_0\circ F_0^{-1}}\cdot z_{\c}$. Multiplying both sides of (\ref{eq:Thm1}) by $z_{T\circ F^{-1}}$ yields
\[\II([F_0])\ni\mfc_{\ll}(T)\cdot t_{F_0}(A_0,T_0)\cdot z_{\c}=\a_{\mu}\cdot t_F(A,T) + \sum_{\s\in L(T;T_0),\s\neq\1}m_{\s}\cdot\mfc_{\ll}(T)\cdot\s\cdot z_{T\circ F^{-1}}.\]
To prove (\ref{eq:IF0G}) it is thus enough to show that for $\s\in L(T;T_0)$, $\s\neq\1$, $\mfc_{\ll}(T)\cdot\s\cdot z_{T\circ F^{-1}}$ is a linear combination of $t_{G_i}(A,T)$ for $G_i\in\mc{G}$. Consider the Young tableaux $G=F\circ T^{-1}\circ\s^{-1}\circ T$. Since $\s\cdot z_{T\circ F^{-1}}=z_{\s\circ T\circ F^{-1}}=z_{T\circ G^{-1}}$, we have $\mfc_{\ll}(T)\cdot\s\cdot z_{T\circ F^{-1}}=t_G(A,T)$.

We prove that $G$ satisfies condition (1) of the theorem. To see this, write $(x,y)=F^{-1}(i)$ and let $b=T(x,y)$. We have $G^{-1}(i)=T^{-1}(\s(b))$, and by the definition of $L(T;T_0)$, either $\s(b)=b$, or $\s(b)$ lies strictly to the left of $b$ in the Young tableau $T$. It follows that 
\[G^{-1}(i)=T^{-1}(\s(b))\preceq T^{-1}(b)=(x,y)=F^{-1}(i),\]
with equality if and only if $b=\s(b)$. Since $\1$ is the only permutation in $L(T;T_0)$ that fixes all $b\in A_0=(T\circ F^{-1})([k])$, the conclusion follows. 

In general it won't be the case that $G$ also satisfies (2), but we can perform a straightening algorithm based on Lemma \ref{lem:shuffling} to write $[G]$ as a linear combination of $[G_i]$ with $G_i\in\mc{G}$. First of all, using part (a) of Lemma \ref{lem:shuffling}, we may assume that for each column $C_i(G)$ the entries in $[k]$ appear before those in $[n-k]$. If $G^{-1}([k])$ is not the Young diagram of a partition, it means that we can find consecutive columns $C_i(G)$, $C_{i+1}(G)$ such that $|C_i(G)\cap [k]|<|C_{i+1}(G)\cap [k]|$. Let $X=C_i(G)\setminus[k]$ and $Y=C_{i+1}(G)\cap[k]$. Applying part (b) of Lemma \ref{lem:shuffling}, we can write $[G]$ as a linear combination of $[G_j]$ where $C_i(G)\cap[k]\subsetneq C_i(G_j)\cap[k]$. Condition (1) will be satisfied by the $G_j$'s since the shuffling relation only moves elements of $[k]$ to the left, so iterating the process yields the desired conclusion.

To prove (\ref{eq:IF0Gbar}), apply (\ref{eq:IF0G}) and induction to each $G\in\mc{G}$ to conclude that $\II([G]:G\in\mc{G})\subset\II([G_0]:G\in\mc{G})$.
\end{proof}

\subsection{Partially symmetric generic tensor algebras}\label{subsec:symmetric}
%\subsection{Quotients of generic tensor algebras}\label{subsec:symmetric}

Starting from the generic tensor algebra $\TT$, one can construct by partial symmetrization other generic algebras that come up naturally in the study of varieties of tensors with a GL-action. We describe the generic algebras relevant to the study of the secant and tangential variety of a Veronese variety, and leave it to the interested reader to perform the construction in other cases of interest.

Write $Set^d$ for the subcategory of $Set$ consisting of sets whose size is divisible by $d$. Consider the functor $\SS^{(d)}:Set^d\to Vec_{\K}$ which assigns to a set $A\in Set^d$, $|A|=nd$, the vector space $\SS^{(d)}_A$ with basis consisting of monomials $z_{\A}=z_{A_1}\cdots z_{A_n}$ in commuting variables $z_{A_i}$, where $\A$ runs over partitions $A=A_1\sqcup\cdots\sqcup A_n$ with $|A_i|=d$. In the work of Sam and Snowden \cite{sam-snowden}, $\SS^{(d)}$ is the \defi{twisted commutative algebra} $\Sym(\Sym^d(\bb{C}^{\infty}))$. If we let $\TT^{(d)}=\TT|_{Set^d}$ be the restriction of $\TT$ to $Set^d$, then there is a natural map $\pi:\TT^{(d)}\to\SS^{(d)}$ which is $\K$-linear and is defined on the elements of the form $z_{\a}$ as follows. For every $A\in Set^d$, $|A|=nd$, and bijection $\a:[nd]\to A$, we consider the partition $\A$ of $A$ obtained by letting $A_i=\a(\{d(i-1)+1,\cdots,di\})$, and define $\pi(z_{\a})=z_{\A}$. $\pi$ is surjective and multiplicative, so any ideal in $\SS^{(d)}$ is the 
image of an ideal in $\TT^{(d)}$. It follows that Theorem~\ref{thm:idealtab} can be used to derive an analogous result for ideals in $\SS^{(d)}$, which we explain next.

For a partition $\ll\vdash nd$, we define a \defi{Young$^d_n$ tableau} of shape $\ll$ to be a function $F:D_{\ll}\to[n]$ with the property that $|F^{-1}(i)|=d$ for all $i\in[n]$. If $T:D_{\ll}\to A$ is a Young tableau then we write $T\circ F^{-1}$ for the partition $\A$ of $A$ with $A_i=T(F^{-1}(i))$. We define the \defi{Young$^d_n$ tabloid} associated to $F$ as before, $[F]=\{t_F(A,T)=\mfc_{\ll}(T)\cdot z_{T\circ F^{-1}}\in\SS^{(d)}_A\}$. Note that replacing $F$ by $\s\circ F$ for $\s\in\S_n$ permutes the parts of the partition $\A$, but preserves $z_{\A}$ because of the commutativity of the $z_{A_i}$'s. It follows that $[F]=[\s\circ F]$ for all $\s\in\S_n$. We represent Young$^d_n$ tabloids just as the Young tabloids, allowing each entry of $[n]$ to occur exactly $d$ times: for $d=3$, $n=4$, $\ll=(6,3,2,1)$, a typical Young$^3_4$ tabloid of shape $\ll$ would be
\[
\ytableausetup{boxsize=normal,tabloids,aligntableaux=center}
[F] = \ytableaushort{
123133, 244, 12, 4
}\ =\ \ytableaushort{
134144, 322, 13, 2
} 
\]
Note that part (a) of Lemma \ref{lem:shuffling} implies that if $F$ has repeated entries in some column, then $[F]=0$. This is the case in the example above.

If $X=\{(x_i,y_i):i\in[d]\}$, $X'=\{(x'_i,y'_i):i\in[d]\}$, with $y_1\leq y_2\leq\cdots$, $y'_1\leq y'_2\leq\cdots$, we write $X\preceq X'$ if $y_i\leq y_i'$ for all $i\in[d]$, and $X\prec X'$ if $X\preceq X'$ and $y_i<y'_i$ for some $i$. With this notation, Theorem \ref{thm:idealtab} translates almost without change in the partially symmetric setting:

\begin{theorem}\label{thm:idealdtab}
 Let $k\leq n$ and $d$ be positive integers, let $\ll\vdash nd$, $\mu\vdash kd$ be partitions with $\mu\subset\ll$, and let $F:D_{\ll}\to[n]$ be a Young$^d_n$ tableau with $F^{-1}([k])=D_{\mu}$. Denote by $\mc{G}$ the collection of Young$^d_n$ tableaux $G:D_{\ll}\to [n]$ with the properties
\begin{enumerate}
 \item $G^{-1}(i)\preceq F^{-1}(i)$ for all $i\in [k]$, with $G^{-1}(i)\prec F^{-1}(i)$ for at least one $i\in [k]$.
 \item $G^{-1}([k])=D_{\delta}$ for some partition $\delta\subset\ll$.
\end{enumerate}
Writing $F_0=F|_{D_{\mu}}$ we have
 \[[F]\in\II([F_0])+\II([G]:G\in\mc{G}).\]
In particular, if we write $G_0$ for the restriction of $G$ to $G^{-1}([k])$ then
 \[[F]\in\II([F_0])+\II([G_0]:G\in\mc{G}).\]
\end{theorem}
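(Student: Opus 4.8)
The plan is to deduce Theorem~\ref{thm:idealdtab} from Theorem~\ref{thm:idealtab} by transporting the statement along the surjective multiplicative map $\pi:\TT^{(d)}\to\SS^{(d)}$ described above. The key observation is that a Young$^d_n$ tableau $F:D_{\ll}\to[n]$ (with $|F^{-1}(i)|=d$) can always be refined to an ordinary Young tableau $\tl F:D_{\ll}\to[nd]$ with the property that $\pi$ carries the Young tabloid $[\tl F]$ (in $\TT^{(d)}$) to the Young$^d_n$ tabloid $[F]$ (in $\SS^{(d)}$). Concretely, one orders the $d$ boxes in each fiber $F^{-1}(i)$ and relabels them with the consecutive integers $d(i-1)+1,\dots,di$; then for any $T:D_{\ll}\to A$ one has $\pi(t_{\tl F}(A,T))=t_F(A,T)$, because $\pi(z_{T\circ\tl F^{-1}})=z_{T\circ F^{-1}}$ by the very definition of $\pi$ (the first $d$ coordinates of $T\circ\tl F^{-1}$ form the block $A_1=T(F^{-1}(1))$, and so on). Thus $\pi([\tl F])=[F]$, and more generally $\pi(\II([\tl F]))\subset\II([F])$ with equality since $\pi$ is surjective.

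First I would fix such a refinement $\tl F:D_{\ll}\to[nd]$ of $F$, chosen so that within each fiber $F^{-1}(i)$ the boxes are numbered in order of increasing column index (breaking ties arbitrarily but consistently). Note that $\tl F^{-1}([kd])=F^{-1}([k])=D_{\mu}$, so $\tl F$ is a legitimate input for Theorem~\ref{thm:idealtab} with the roles of $(k,n)$ played by $(kd,nd)$. Applying Theorem~\ref{thm:idealtab} to $\tl F$ gives
\[
[\tl F]\in\II([\tl F_0])+\II([\tl G]:\tl G\in\tl{\mc G}),
\]
where $\tl F_0=\tl F|_{D_{\mu}}$ and $\tl{\mc G}$ is the set of ordinary Young tableaux $\tl G:D_{\ll}\to[nd]$ satisfying $\tl G^{-1}(j)\preceq\tl F^{-1}(j)$ for all $j\in[kd]$ (strictly for at least one $j$) and $\tl G^{-1}([kd])=D_{\delta}$ for some $\delta\subset\ll$. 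Now apply $\pi$ to this membership. Since $\pi$ is $\K$-linear and multiplicative, $\pi(\II(\mc E))=\II(\pi(\mc E))$ for any set of elements $\mc E$, so we obtain
\[
[F]=\pi([\tl F])\in\II([F_0])+\II(\pi([\tl G]):\tl G\in\tl{\mc G}),
\]
using $\pi([\tl F_0])=[F_0]$. It remains to identify each $\pi([\tl G])$ with either $0$ or a Young$^d_n$ tabloid $[G]$ with $G\in\mc G$ in the sense of Theorem~\ref{thm:idealdtab}.

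The main work, and the step I expect to be the real obstacle, is this last identification: controlling how the partial-symmetrization map $\pi$ acts on the tabloids $[\tl G]$ coming out of Theorem~\ref{thm:idealtab} for $\tl F$, and matching the resulting inequalities with the $\preceq$ relation on $d$-element sets defined just before Theorem~\ref{thm:idealdtab}. Given $\tl G\in\tl{\mc G}$, let $G:D_{\ll}\to[n]$ be the composition of $\tl G$ with the projection $[nd]\to[n]$ sending $d(i-1)+1,\dots,di$ to $i$; then $\pi([\tl G])=[G]$ (if $G$ has no repeated entry in a column) or $[G]=0$ (if it does, by Lemma~\ref{lem:shuffling}(a)), and $G$ is a Young$^d_n$ tableau since $|\tl G^{-1}(\text{block }i)|=d$ — wait, this last point needs care: the preimage under $\tl G$ of the block $\{d(i-1)+1,\dots,di\}$ need not have size $d$ a priori. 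The point is that the condition $\tl G^{-1}(j)\preceq\tl F^{-1}(j)$ for all $j$, together with the fact that $\tl F^{-1}$ of the $i$-th block is exactly $F^{-1}(i)$ (a set of $d$ boxes), forces, after reindexing, the multiset of columns occupied by $\tl G^{-1}(\{d(i-1)+1,\dots,di\})$ to be dominated termwise by the multiset of columns of $F^{-1}(i)$; summing over $i$ in $[k]$ and using that $\tl G^{-1}([kd])=D_{\delta}$ has the right total size, one shows the block-preimages all have size $d$. Once $G$ is known to be a Young$^d_n$ tableau, the termwise column domination is precisely the relation $G^{-1}(i)\preceq F^{-1}(i)$, strict for some $i$ because strictness held for some $j\in[kd]$; and $G^{-1}([k])=\tl G^{-1}([kd])=D_{\delta}$ gives condition (2). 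Hence every nonzero $\pi([\tl G])$ is a tabloid $[G]$ with $G\in\mc G$, which completes the proof of the first displayed membership. The ``in particular'' statement then follows by applying the first part inductively to each $G\in\mc G$ exactly as in the proof of Theorem~\ref{thm:idealtab}, to reduce $\II([G]:G\in\mc G)\subset\II([G_0]:G\in\mc G)$.
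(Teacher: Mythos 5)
Your proposal is correct and follows essentially the same route as the paper: lift $F$ to an ordinary Young tableau $\tl{F}$, apply Theorem~\ref{thm:idealtab} with $(kd,nd)$ in place of $(k,n)$, and push the resulting membership forward along the surjective multiplicative map $\pi$, identifying each $\pi([\tl{G}])$ with $[G]$ for $G=\pi'\circ\tl{G}$. The one worry you raise---that the block preimages under $\tl{G}$ might not have size $d$---is vacuous, since $\tl{G}$ is a bijection $D_{\ll}\to[nd]$, so the preimage of each $d$-element block automatically has exactly $d$ elements.
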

The condition $G\in\mc{G}$ can be restated simply by saying that when going from $F$ to $G$, each entry of $F$ contained in $D_{\mu}$ either remains in the same column, or is moved to the left, the latter situation occurring for at least one such entry. The exact location of an entry within a column is irrelevant due to part (a) of Lemma \ref{lem:shuffling}.

\begin{proof} Consider a Young tableau $\tl{F}:D_{\ll}\to[nd]$ which is a lifting of the Young$^d_n$ tableau $F$, i.e. $\tl{F}$ induces a bijection between $F^{-1}(i)$ and $\{d(i-1)+1,\cdots,di\}$ for all $i\in[n]$. By construction we have that $\pi([\tl{F}])=[F]$, i.e. $\pi(t_{\tl{F}}(A,T))=t_F(A,T)$ for all $A$ with $|A|=nd$ and all Young tableaux $T:D_{\ll}\to A$. Letting $\tl{F}_0=\tl{F}|_{D_{\mu}}$, we have that $\pi([\tl{F}_0])=[F_0]$. If $\tl{G}$ is any Young tableau satisfying conditions (1) and (2) of Theorem~\ref{thm:idealtab} (with $k$ and $n$ replaced by $kd$ and $nd$ respectively) then the Young$^d_n$ tableau $G:D_{\ll}\to[n]$ obtained by $G=\pi'\circ\tl{G}$, where $\pi'(j)=i$ when $j\in\{d(i-1)+1,\cdots,di\}$, satisfies conditions (1) and (2) of Theorem~\ref{thm:idealdtab}, and moreover $\pi([\tl{G}])=[G]$. The conclusion of the theorem then follows from that of Theorem~\ref{thm:idealtab}. 
\end{proof}

We end with a series of examples of ideals in the generic algebra $\SS^{(d)}$, explaining their relevance to the study of spaces of tensors. Before that, we introduce one last piece of notation.

For a functor $\FF:Set^d\to Vec_{\K}$ and a partition $\ll\vdash nd$, write $\FF_{\ll}$ for the subfunctor that assigns to a set $A\in Set^d$ the $\ll$--isotypic component $(\FF_A)_{\ll}$ of the $\S_A$--representation $\FF_A$ (note that $(\FF_{\ll})_A=(\FF_A)_{\ll}$). A choice of a set $A$ with $|A|=nd$ and of a Young tableau $T:D_{\ll}\to A$, gives rise to a vector space $c_{\ll}(T)\cdot (\FF_A)_{\ll}$ of dimension equal to the multiplicity $m_{\ll}(\FF)$ of the irreducible $\S_A$--representation $[\ll]$ inside $\FF_A$. We call this space a \defi{$\ll$--highest weight space} of $\FF$ and denote it by $\rm{hwt}_{\ll}(\FF)$. We call the elements of $\rm{hwt}_{\ll}(\FF)$ \defi{$\ll$--covariants} of $\FF$. Note that there are choices in the construction of the $\ll$--covariants of $\FF$, but the subfunctor of $\FF$ that they generate is $\FF_{\ll}$, which is independent of these choices.

Taking $\FF=\SS^{(d)}$ and $\ll\vdash nd$, the $\ll$--covariants of $\SS^{(d)}$ are just linear combinations of Young$^d_n$ tabloids of shape $\ll$. We have that $m_{\ll}(\SS^{(d)})$ coincides with the multiplicity of the Schur functor $S_{\ll}$ inside the plethysm $\Sym^n\circ\Sym^d$. We call $\SS^{(d)}$ the \defi{generic version} of the polynomial ring $S=\Sym(\Sym^d V)$, which is the homogeneous coordinate ring of the projective space $\bb{P}(\Sym^d V)$. More generally, we write $Sch_{\K}$ for the category of $\K$--schemes, and consider a contravariant functor $X:Vec_{\K}\to Sch_{\K}$, with the property that $X(V)\subset \bb{P}(\Sym^d V)$ is a closed subscheme. The ideal of equations $I(X(V))$ and homogeneous coordinate ring $\K[X(V)]$ define polynomial functors $I_X,S_X:Vec_{\K}\to Vec_{\K}$. They have corresponding \defi{generic versions} $\II_X\subset\SS^{(d)}$ and $\SS^{(d)}_X=\SS^{(d)}/\II_X$ defined as follows. For $A$ with $|A|=nd$, we consider a vector space $V_A$ with a basis indexed by the 
elements of $A$. The choice of basis on $V_A$ gives rise to a maximal torus $T_A\subset\rm{GL}(V_A)$ of diagonal matrices, and there is a natural identification between the $(1,1,\cdots,1)$--weight space of $\Sym^n(\Sym^d V_A)$ and the vector space $\SS^{(d)}(A)$. Via this identification, we let $\II_X(A)$ be the subspace of $\SS^{(d)}(A)$ that corresponds to the $(1,1,\cdots,1)$--weight space of $I_X(V)\subset\Sym^n(\Sym^d V)$. The information encoded by $I_X,S_X$ is equivalent to that of $\II_X,\SS^{(d)}_X$ (see also the discussion on polarization and specialization from \cite[Section~3C]{rai-GSS}, and \cite{sam-snowden}). %such that when $\rm{dim}(V)$ is sufficiently large, the $\ll$--covariants of $\II_X$ (resp. $\SS^{(d)}_X$) can be identified with the $\ll$--covariants of $I(X)$ (resp. $\K[X]$) with respect to some choice of maximal unipotent subgroup of $\rm{GL}(V)$ .

\begin{example}[Generic ideals of subspace varieties]\label{ex:subspace}
 Denote by $\II^{<k}\subset\SS^{(d)}$ the ideal generated by the $\ll$--covariants of $\SS^{(d)}$, where $\ll$ runs over partitions with at least $k$ parts. $\II^{<k}$ is the generic version of the ideal of a subspace variety \cite[Section~7.1]{landsberg}: we have $\II^{<k}=\II_X$, where $X:Vec_{\K}\to Sch_{\K}$ is defined by letting $X(V)$ be the union of all the subspaces $\bb{P}(\Sym^d W)\subset\bb{P}(\Sym^d V)$, where $W$ runs over the $(k-1)$--dimensional quotients of $V$. When $k=2$, $X=Ver_d$ is the functor which associates to $V$ the \defi{$d$--th Veronese embedding} of $\bb{P}V$. %If we are interested in questions about vector spaces of dimension smaller than $k$, then we can work in the quotient $\ol{\SS}^{(d)}=\SS^{(d)}/\II^{<k}$. If we're dealing with a rational normal curve, or its tangential and secant varieties, we can take $k=3$.
\end{example}

\begin{example}[Covariants associated to graphs]\label{ex:graphcovariant}
 Given a graph $Q$, we write $V(Q)$ for the set of vertices, and $E(Q)$ for the multiset of edges (we allow multiple edges between two vertices). If $Q$ is an unlabeled graph with $n$ vertices and $e$ edges, with the property that to any vertex there are at most $d$ incident edges, then one constructs a $\ll$--covariant $p(Q)\in\SS$ for $\ll=(nd-e,e)$, as follows. Choose a labeling of the vertices of $Q$ with elements of $[n]$ and consider a Young$^d_n$ tableau $F:D_{\ll}\to [n]$ having a column of size two with entries $x,y$ (in some order) for each edge $xy\in E(Q)$. The columns of size one of $F$ are such that each element of $[n]$ appears exactly $d$ times in $F$. As before, $[F]$ denotes the associated Young$^d_n$ tabloid. For example, when $d=3$, $r=4$ and $e=5$, typical $Q$ and $[F]$ look like
\[\ytableausetup{boxsize=normal,tabloids,aligntableaux=center}
\Yvcentermath1
 \xymatrix@=15pt{
 & & \ccircle{3} \ar@{-}[dd] \ar@{-}[dr] & &\\
Q\quad = \quad & \ccircle{2} \ar@{-}[ur] \ar@{-}@<-.5ex>[dr] \ar@{-}@<.5ex>[dr] & & \ccircle{4} & \quad\leadsto\quad [F]\quad=\quad\ytableaushort{1112344,22334} \\ 
 & & \ccircle{1} & &\\
}
\]
\noindent Write $[Q]$ for the tabloid $[F]$. There are choices in the construction of $[Q]$, but the ideal $\II([Q])$ it generates inside $\SS^{(d)}$ is independent of these choices and is denoted $\II(Q)$. More generally, if $\mc{Q}$ is a family of graphs, $\II(\mc{Q})=\II([Q]:Q\in\mc{Q})$ is the ideal generated by the corresponding tabloids.
\end{example}

A direct consequence of Theorem \ref{thm:idealdtab} is the following

\begin{proposition}\label{prop:idealgraphs}
 Let $Q'$ be a subgraph of a graph $Q$. We have $\II(Q)\subset\II(\mc{Q})$ where $\mc{Q}$ is the set of graphs $\tilde{Q}$ with $V(Q')=V(\tilde{Q})$, $E(Q')\subseteq E(\tilde{Q})$ and $|E(\tilde{Q})|\leq |E(Q)|$.
\end{proposition}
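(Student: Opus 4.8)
The plan is to deduce the statement from Theorem~\ref{thm:idealdtab} by induction on the number of ``extra'' vertices $|V(Q)\setminus V(Q')|$. If $V(Q)=V(Q')$, then $Q$ itself lies in $\mc{Q}$ and $\II(Q)\subset\II(\mc{Q})$ is immediate, so assume there is a vertex in $V(Q)\setminus V(Q')$; relabel so that $V(Q)=[n]$, this vertex is $n$, and $V(Q')\subset[n-1]$. Set $e=|E(Q)|$, $e'=|E(Q')|$, $t=\deg_Q(n)$, and $\ll=(nd-e,e)$, so that $p(Q)$ is represented by a tabloid $[F]$ of shape $\ll$.

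First I would pick the representative $F:D_\ll\to[n]$ of $[p(Q)]$ carefully. Among the $e$ height-two columns of $D_\ll$, let columns $1,\dots,e'$ be the edges of $Q'$, let columns $e'+1,\dots,e-t$ be the remaining edges of $Q$ not incident to $n$, and let the last $t$ height-two columns be the edges at $n$, each with the entry $n$ placed in its second row; place the remaining $d-t$ copies of $n$ in the final $d-t$ boxes of the first row, and fill in the rest of the first row in any way so that every label occurs exactly $d$ times. (This is consistent precisely because every vertex of $Q$ has degree at most $d$.) A short computation with shapes then shows that $F^{-1}([n-1])$ equals the Young diagram $D_\mu$ of a partition $\mu\vdash(n-1)d$ with $\mu\subset\ll$, and that $F_0=F|_{D_\mu}$ represents the covariant $p(Q\setminus n)$, where $Q\setminus n$ denotes the graph obtained by deleting $n$ and its incident edges. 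Applying Theorem~\ref{thm:idealdtab} with $k=n-1$ yields
\[
[p(Q)]\in\II(Q\setminus n)+\II\big([p(R_G)]:G\in\mc{G}\big),
\]
where for $G\in\mc{G}$ we write $R_G$ for the graph on $[n-1]$ encoded by the subtableau $G|_{G^{-1}([n-1])}$.

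Next I would observe that it suffices to show that each graph on the right has at most $n-1$ vertices, at most $e$ edges, and contains $Q'$ as a subgraph. Indeed, any such graph has strictly fewer extra vertices relative to $Q'$ than $Q$ does, so the inductive hypothesis bounds the ideal it generates by the ideal generated by graphs that contain $Q'$, have vertex set $V(Q')$, and have at most $e$ edges --- that is, by $\II(\mc{Q})$; hence $[p(Q)]\in\II(\mc{Q})$ and $\II(Q)\subset\II(\mc{Q})$. For $Q\setminus n$ the three properties are clear: it has $n-1$ vertices and $e-t\le e$ edges, and $Q'\subset Q\setminus n$ because $n\notin V(Q')$. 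For $R_G$, it has $n-1$ vertices by construction, and $|E(R_G)|$ is the number of height-two columns of a Young$^d_{n-1}$ tableau whose shape is contained in $\ll$, hence at most $\ll_2=e$. The remaining --- and main --- point is that $E(Q')\subset E(R_G)$.

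To prove this I would show that the first $e'$ columns of $G$, viewed as two-element subsets of $[n-1]$, coincide with those of $F$; since their entries lie in $V(Q')\subset[n-1]$, these columns are among the height-two columns of $G|_{G^{-1}([n-1])}$ and therefore contribute exactly the edges of $Q'$ to $R_G$. We may assume $[p(R_G)]\ne0$, so by Lemma~\ref{lem:shuffling}(a) no label of $[n-1]$ is repeated within a column of $G$; in particular each vertex of $Q'$ occupies $d$ distinct columns of $G$. Now argue by induction on $j=1,\dots,e'$: supposing that columns $1,\dots,j-1$ of $G$ and of $F$ agree and writing $\{a,b\}$ for the $j$-th column of $F$ (an edge of $Q'$), the number $\tau$ of columns among $1,\dots,j-1$ containing $a$ is the same for $F$ and $G$, so the other $d-\tau$ occurrences of $a$ in $G$ lie in columns $\ge j$; thus the $(\tau+1)$-th smallest column of $G$ containing $a$ is $\ge j$, while the componentwise inequality $G^{-1}(a)\preceq F^{-1}(a)$ forces it to be $\le j$ (in $F$ it is exactly $j$). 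Hence $a$ --- and, by the same reasoning, $b$ --- occupies column $j$ of $G$, which, being of height two, must equal $\{a,b\}$. This rigidity of the initial block of $e'$ columns is the crux of the argument; everything else is routine bookkeeping about Young-diagram shapes and vertex degrees.
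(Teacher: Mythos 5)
Your proof is correct and is essentially the argument the paper has in mind (the paper states the proposition as a ``direct consequence'' of Theorem~\ref{thm:idealdtab} without writing it out): choose a representative tableau with the edges of $Q'$ occupying the initial columns, and observe that condition (1) of that theorem --- entries of $D_\mu$ only move weakly left --- forces those initial columns to be frozen, which is exactly your rigidity argument. The only inessential difference is your outer induction peeling off one vertex at a time; a single application of Theorem~\ref{thm:idealdtab} with $k=|V(Q')|$ (so that $F_0$ encodes the induced subgraph $Q[V(Q')]$) yields the statement directly by the same reasoning.
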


\begin{example}[Generic ideals of secant line varieties \cite{rai-GSS}]\label{ex:secant}
 Consider $\s_2:Vec_{\K}\to Sch_{\K}$, defined by letting $\s_2(V)$ be the variety of secant lines to $Ver_d(V)$. We have $\II(\s_2)=\II^{<3}+\II^{\Delta}$, where $\II^{\Delta}$ is the ideal generated by graphs containing a \defi{triangle} (i.e. a complete subgraph on $3$ vertices). It follows from Proposition \ref{prop:idealgraphs} that $\II^{\Delta}$ is generated in degree three by the graphs on $3$ vertices that contain a triangle.
\end{example}

\begin{example}[Generic ideals of tangential varieties \cite{oed-rai}]\label{ex:tangential}
 Consider $\t:Vec_{\K}\to Sch_{\K}$, defined by letting $\t(V)$ be the tangential variety to $Ver_d(V)$. We have $\II(\tau)=\II^{<3}+\II^{rich}$, where $\II^{rich}$ is the ideal generated by \defi{rich} graphs, i.e. graphs with more edges than vertices. $\II^{rich}$ is generated by graphs with at most $4$ vertices. In particular, the ideal of $\tau$ is generated in degree at most $4$.
\end{example}

\section{Proof of Theorem \ref{thm:yngconsec}}\label{sec:proofThm2}

We fix a Young tableau $S$, and the Young tableau $T$ obtained from $S$ by adding one box with entry $a$ in position $(u,v)$. We write $\mu$ (resp. $\ll$) for the shape of $S$ (resp. $T$). We begin by rewriting equation (\ref{eq:prodconsec}) in a more convenient form. We define for $1\leq j\leq\mu_1$
\begin{equation}\label{eq:colsum}
z_j=\sum_{b\in C_j(S)}(a,b)\in\K[\S_n].
\end{equation}
Writing $\mu!=\prod_i \mu_i!$, we note that
 \begin{equation}\label{eq:absymmetrizers}
 \begin{aligned}
  \mfa_{\ll}(T)=&\frac{\mfa_{\ll}(T)}{\mu!}\cdot \mfa_{\mu}(S),\quad \mfb_{\ll}(T)\overset{(\ref{eq:symmskewsymmadda})}{=}\mfb_{\mu}(S)\cdot(1-z_v),\\
  &\mfc_{\ll}(T)=\frac{\mfa_{\ll}(T)}{\mu!}\cdot \mfc_{\mu}(S)\cdot(1-z_v).
 \end{aligned}
 \end{equation}
Using (\ref{eq:absymmetrizers}), equation (\ref{eq:prodconsec}) becomes after multiplying by $\mu!$
\begin{equation}\label{eq:betterprodconsec}
\mfa_{\ll}(T)\cdot \mfc_{\mu}(S)\cdot(1-z_v)\cdot \mfc_{\mu}(S)=\mfa_{\ll}(T)\cdot \mfc_{\mu}(S)\cdot\a_{\mu}\cdot(1-z_v)\cdot\prod_{i=1}^{\tl{m}}\left(1-\frac{\tl{x}_i}{\tl{r}_i}\right).  
\end{equation}
Consider the blocks $B_1,\cdots,B_m$ of the Young tableau obtained by removing the first $(v-1)$ columns of $S$, denote by $l_i$ (resp. $h_i$) their lengths (resp. heights), let 
\begin{equation}\label{eq:xi}
x_i=\sum_{b\in B_i}(a,b) 
\end{equation}
and let $r_i$ denote the length of the hook of $\mu$ centered at $(h_i,v)$,
\begin{equation}\label{eq:hookri}
 r_i=l_1+\cdots+l_i+h_1-h_i.
\end{equation}
Note that we have two possibilities:
\begin{enumerate}
 \item $\mu'_v>\mu'_{v+1}$: in which case $m=\tl{m}+1$, $B_1=C_v(S)$, $B_i=\tl{B}_{i-1}$ for $i>1$.
 \item $\mu'_v=\mu'_{v+1}$: in which case $m=\tl{m}$, $B_1=C_v(S)\cup\tl{B}_1$, $B_i=\tl{B}_i$ for $i>1$.
\end{enumerate}

\begin{lemma}\label{lem:betterprodconsec}
 With the notation above, we have
 \begin{equation}\label{eq:czv}
  \mfc_{\mu}(S)\cdot(1-z_v)\cdot\prod_{i=1}^{\tl{m}}\left(1-\frac{\tl{x}_i}{\tl{r}_i}\right)=\mfc_{\mu}(S)\cdot\prod_{i=1}^{m}\left(1-\frac{x_i}{r_i}\right).
 \end{equation}
 \begin{equation}\label{eq:czvc}
  \mfc_{\mu}(S)\cdot(1-z_v)\cdot \mfc_{\mu}(S)=\mfc_{\mu}(S)\cdot\left(1-\frac{x_1}{r_1}\right)\cdot \mfc_{\mu}(S).
 \end{equation}
In particular, equation (\ref{eq:betterprodconsec}) is equivalent to
\begin{equation}\label{eq:consec}
\mfa_{\ll}(T)\cdot \mfc_{\mu}(S)\cdot\left(1-\frac{x_1}{r_1}\right)\cdot \mfc_{\mu}(S)=\mfa_{\ll}(T)\cdot \mfc_{\mu}(S)\cdot\a_{\mu}\cdot\prod_{i=1}^{m}\left(1-\frac{x_i}{r_i}\right).
\end{equation}
\end{lemma}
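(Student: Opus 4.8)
The plan is to reduce everything to two basic manipulations: the factorization $\mfa(X\cup\{a\})=\mfa(X)\cdot(1+z)$ from (\ref{eq:symmskewsymmadda}), and the Garnir-type vanishing (\ref{eq:Garnircol}) which says that $\mfc_{\mu}(S)$ annihilates $1-\sum_{x\in C_j(S)}(a,x)$ whenever $C_j(S)$ is a column at least as long as the column we are ``transposing into.'' First I would prove (\ref{eq:czv}). The two cases (1) and (2) listed before the lemma correspond exactly to whether the block $B_1$ is the single column $C_v(S)$ or the union $C_v(S)\cup\tilde B_1$. In case (1), $z_v=x_1$ (since $B_1=C_v(S)$) and $r_1=l_1+\cdots$; one checks directly that $\tilde r_i=r_{i+1}$ for $i\ge 1$, so $(1-z_v)\prod_{i\ge 1}(1-\tilde x_i/\tilde r_i)$ is literally $\prod_{i\ge 1}(1-x_i/r_i)$ and no group-algebra identity is needed — it is a relabeling. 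In case (2), we have $B_1=C_v(S)\cup\tilde B_1$, so $x_1=z_v+\tilde x_1$, and the first two factors on the left of (\ref{eq:czv}) are $(1-z_v)(1-\tilde x_1/\tilde r_1)$, which I must compare with $1-x_1/r_1=1-(z_v+\tilde x_1)/r_1$ after multiplying on the left by $\mfc_{\mu}(S)$. Here the point is that $\mfc_{\mu}(S)\cdot z_v$ simplifies: since every box of $\tilde B_1$ lies in a column of $S$ of the same height as $C_v(S)$ — that is the meaning of $\mu'_v=\mu'_{v+1}$ — the Garnir relation (\ref{eq:Garnircol}) gives $\mfc_{\mu}(S)\cdot z_v$ and $\mfc_{\mu}(S)\cdot(\text{column sum over }\tilde B_1)$ the right proportionality, and the hook-length bookkeeping $r_1=l_1+h_1-h_1+\cdots$ against $\tilde r_1$ makes the two expressions agree. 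I expect the cleanest route is to expand both sides, collect the coefficient of each $\tilde x_i$, and use (\ref{eq:Garnircol}) to rewrite $\mfc_{\mu}(S)\cdot z_v$ in terms of the $\tilde x_i$ appearing from $\tilde B_1$.

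Next I would prove (\ref{eq:czvc}). The subtlety is that here the second $\mfc_{\mu}(S)$ sits on the \emph{right}, so I cannot blindly push relations through it. The natural approach is: $\mfc_{\mu}(S)\cdot(1-z_v)\cdot\mfc_{\mu}(S) = \mfa_{\mu}(S)\mfb_{\mu}(S)(1-z_v)\mfa_{\mu}(S)\mfb_{\mu}(S)$, and I want to commute $1-z_v$ past $\mfa_{\mu}(S)$ or absorb it. Using (\ref{eq:symmskewsymmadda}) in the form $\mfb_{\mu}(S)(1-z_v)=\mfb(C_v(S)\cup\{a\})\cdot(\text{rest of }\mfb_{\mu}(S))/(\ldots)$ is awkward because $a\notin S$; instead I would observe that $(1-z_v)$ is, up to the identity term, minus a sum of transpositions $(a,b)$ with $b\in C_v(S)$, and that conjugating $\mfc_{\mu}(S)$ by such a transposition has a controlled effect. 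A cleaner line: by (\ref{eq:Garnircol}) applied with the column $C_1(S)$ (the longest column, which has height $h_1$), $\mfc_{\mu}(S)$ kills $1-\sum_{x\in C_1(S)}(a,x)$; more generally it kills $1-(1/h_1)\cdot(\text{any column sum of the same height})$-type combinations after symmetrizing. The identity (\ref{eq:czvc}) should then follow by showing that, modulo the left annihilator of $\mfc_{\mu}(S)$ on the right, $(1-z_v)$ is equivalent to $(1-x_1/r_1)$; this is again a hook-length computation, since $r_1$ is precisely the hook length attached to the corner cell $(h_1,v)$ and $\mfc_\mu(S)\mfc_\mu(S)=\alpha_\mu\mfc_\mu(S)$ converts the averaged column-sum into the $1/r_1$ coefficient.

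Finally, the ``in particular'' clause is immediate: starting from (\ref{eq:betterprodconsec}), substitute (\ref{eq:czvc}) on the left-hand side and (\ref{eq:czv}) on the right-hand side (the latter after the common left factor $\mfa_{\ll}(T)\cdot\mfc_{\mu}(S)$), and (\ref{eq:consec}) drops out with no further work.

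The main obstacle is the asymmetry in (\ref{eq:czvc}): because the second Young symmetrizer is on the right, I cannot simply apply (\ref{eq:Garnircol}) as stated — that identity is about \emph{left} multiplication by $\mfc_{\mu}(S)$. I will need either to transpose the whole equation (replacing $\mfc_\mu(S)$ by its image under the anti-automorphism $\s\mapsto\s^{-1}$, which sends $\mfc_\mu(S)=\mfa_\mu\mfb_\mu$ to $\mfb_\mu\mfa_\mu$ and $z_v$ to itself) and argue on the transposed side, or to carefully interpose the idempotency relation $\mfc_{\mu}(S)^2=\alpha_\mu\mfc_{\mu}(S)$ to fold the right-hand $\mfc_\mu(S)$ back in. Getting this bookkeeping exactly right — in particular matching the scalar $\alpha_\mu$ against the hook length $r_1$ — is the delicate point; everything else is either a relabeling (case (1) of (\ref{eq:czv})) or a routine expansion.
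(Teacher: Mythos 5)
Your overall reduction — prove the two displayed identities and then substitute them into (\ref{eq:betterprodconsec}) to get (\ref{eq:consec}) — is exactly the paper's strategy, and your treatment of the case $\mu'_v>\mu'_{v+1}$ (where $B_1=C_v(S)$, $r_1=1$, and everything is a relabeling) is correct. But the mechanism you invoke for the remaining case rests on a misapplication of the Garnir relation. In (\ref{eq:Garnircol}) the element $a$ must be an \emph{entry of the tableau whose symmetrizer appears on the left}, sitting in a column no longer than $C_j$. In this lemma $a$ is the one box of $T$ \emph{not} in $S$, so $\mfc_{\mu}(S)\cdot\bigl(1-\sum_{x\in C_j(S)}(a,x)\bigr)$ does not vanish, and $\mfc_{\mu}(S)\cdot z_v$ is not proportional to $\mfc_{\mu}(S)$ times the column sums over $\tl{B}_1$ (if it were, Lemma \ref{lem:leftcol} of the paper would not need the prefactor $\mfa_{\ll}(T)$). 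The relation that actually does the work is a two-step one: in the product $z_i\cdot z_j=\sum_{b,c}(a,b)(a,c)$ one first rewrites $(a,b)(a,c)=(c,b)(a,b)$, and \emph{then} the inner sum $\sum_{c\in C_j(S)}(c,b)$ is a Garnir sum in the entries of $S$, giving $\mfc_{\mu}(S)\cdot z_i\cdot z_j=\mfc_{\mu}(S)\cdot z_i$ when $\mu'_i\le\mu'_j$ (Lemma \ref{lem:xixj} in the paper). With that identity the cross term $\mfc_\mu(S)\cdot z_v\cdot\tl{x}_1=\mfc_\mu(S)\cdot z_v\cdot(l_1-1)$ falls out and (\ref{eq:czv}) follows from $\tl{r}_1=l_1=r_1$, $x_1=z_v+\tl{x}_1$. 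Your plan as written cannot produce this term-by-term.

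For (\ref{eq:czvc}) you correctly identify the obstacle — the second $\mfc_{\mu}(S)$ sits on the right — but neither of your proposed remedies resolves it. The anti-automorphism $\s\mapsto\s^{-1}$ sends $\mfc_\mu(S)=\mfa_\mu(S)\mfb_\mu(S)$ to $\mfb_\mu(S)\mfa_\mu(S)$, a different element, so transposing does not reproduce the statement; and idempotency alone does not let you move $(1-z_v)$ across the right-hand factor. The missing ingredient is a sandwich lemma for cyclic permutations (Lemma \ref{lem:ccyclec} of the paper): for $\s=(a,b_k,\dots,b_1)$ with the $b_i$ in distinct columns, $\mfc_\mu(S)\cdot\s\cdot\mfc_\mu(S)$ vanishes unless $b_1,b_2$ share a row, in which case the first transposition can be absorbed. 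Granting that, (\ref{eq:czvc}) reduces (after multiplying (\ref{eq:czv})'s two-factor version on the right by $\mfc_\mu(S)$) to showing $\mfc_{\mu}(S)\cdot(1-z_v)\cdot z_j\cdot\mfc_{\mu}(S)=0$ for the columns $j$ of $\tl{B}_1$, which follows because each $b_2\in C_j(S)$ has exactly one row-mate $b_1\in C_v(S)$. Without some version of this cyclic-permutation analysis, the ``hook-length bookkeeping'' you gesture at has nothing to act on, so I would count this as a genuine gap rather than a routine omission.
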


We first prove a number of relations that will be useful throughout this section.

\begin{lemma}\label{lem:xixj}
 If $1\leq i\neq j\leq\mu_1$ are such that $\mu'_i\leq\mu'_j$ then
\begin{equation}\label{eq:zizj}
\begin{aligned}
&\mfc_{\mu}(S)\cdot z_i\cdot z_j = \mfc_{\mu}(S)\cdot z_i,\\
&\mfc_{\mu}(S)\cdot z_i^2 = \mfc_{\mu}(S)\cdot(\mu'_i-(\mu'_i-1)\cdot z_i).
\end{aligned}
\end{equation}
As a consequence, for $1\leq j<i\leq m$ we have
\begin{equation}\label{eq:xixj}
\begin{aligned}
&\mfc_{\mu}(S)\cdot x_i\cdot x_j =\mfc_{\mu}(S)\cdot x_i\cdot l_j,\\
&\mfc_{\mu}(S)\cdot x_i^2 =\mfc_{\mu}(S)\cdot ((l_i-h_i)\cdot x_i + l_i\cdot h_i).
\end{aligned}
\end{equation}
\end{lemma}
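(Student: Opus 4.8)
The plan is to deduce both identities in~(\ref{eq:zizj}), and then~(\ref{eq:xixj}), from three ingredients: the elementary relation $(a,b)(a,c)=(b,c)(a,b)$ in $\S_n$ for pairwise distinct $a,b,c$; the column skew-symmetry $\mfc_\mu(S)\cdot\s=\rm{sgn}(\s)\cdot\mfc_\mu(S)$ for $\s\in\mc{C}_S$ (immediate from $\mfc_\mu(S)=\mfa_\mu(S)\mfb_\mu(S)$); and the Garnir relation~(\ref{eq:Garnircol}). The whole point is to keep careful track of left- versus right-multiplication so that, after each rewriting, the relevant transposition sits next to $\mfc_\mu(S)$ on the side where skew-symmetry or~(\ref{eq:Garnircol}) can be invoked.

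First I would handle the second identity in~(\ref{eq:zizj}). Expanding $z_i^2=\left(\sum_{b\in C_i(S)}(a,b)\right)^2$ produces $\mu'_i\cdot\1$ from the diagonal terms $(a,b)^2$, plus $\sum_{b\neq b'}(a,b)(a,b')$ over ordered pairs of distinct $b,b'\in C_i(S)$. Using $(a,b)(a,b')=(b,b')(a,b)$ and multiplying on the left by $\mfc_\mu(S)$, the transposition $(b,b')\in\S_{C_i(S)}\subset\mc{C}_S$ is adjacent to $\mfc_\mu(S)$, so $\mfc_\mu(S)\cdot(b,b')(a,b)=-\mfc_\mu(S)\cdot(a,b)$. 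Each $b$ appears in $\mu'_i-1$ ordered pairs, so the double sum contributes $-(\mu'_i-1)\cdot\mfc_\mu(S)\cdot z_i$, which gives the stated formula. (Note this step uses only the column structure of $S$, not any hypothesis relating columns $i$ and $j$.)

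For the first identity in~(\ref{eq:zizj}), with $\mu'_i\leq\mu'_j$, I would expand $z_iz_j=\sum_{b\in C_i(S),\,c\in C_j(S)}(a,b)(a,c)$. Here $i\neq j$ forces all $b,c$ to be distinct, so $(a,b)(a,c)=(b,c)(a,b)$. Left-multiplying by $\mfc_\mu(S)$ and summing first over $c\in C_j(S)$, the inner sum $\sum_{c\in C_j(S)}\mfc_\mu(S)\cdot(b,c)$ equals $\mfc_\mu(S)$ by the Garnir relation~(\ref{eq:Garnircol}) applied to $S$, with the \emph{entry} $b\in C_i(S)$ in the role of ``$a$'' there — this is exactly what~(\ref{eq:Garnircol}) supplies, and it is available precisely because $\mu'_i\leq\mu'_j$. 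Hence $\mfc_\mu(S)\cdot z_iz_j=\sum_{b\in C_i(S)}\mfc_\mu(S)\cdot(a,b)=\mfc_\mu(S)\cdot z_i$.

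Finally,~(\ref{eq:xixj}) is bookkeeping layered on~(\ref{eq:zizj}). Writing $x_i=\sum_{p}z_p$ over the $l_i$ columns $p$ of the block $B_i$, each of height $h_i$: for $j<i$ every column of $B_j$ is strictly taller than every column of $B_i$, so expanding $x_ix_j$ and applying the first identity of~(\ref{eq:zizj}) to each $z_qz_p$ collapses it to $l_j\cdot\mfc_\mu(S)\cdot x_i$. For $x_i^2=\sum_q z_q^2+\sum_{q\neq q'}z_qz_{q'}$, all columns involved have the same height $h_i$; the off-diagonal terms give $(l_i-1)\cdot\mfc_\mu(S)\cdot x_i$ by the first identity and the diagonal terms give $l_ih_i\cdot\mfc_\mu(S)-(h_i-1)\cdot\mfc_\mu(S)\cdot x_i$ by the second, and adding these yields $\mfc_\mu(S)\cdot\big((l_i-h_i)\cdot x_i+l_ih_i\big)$. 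I do not expect a genuine obstacle here: the only subtlety is the side on which each transposition lands, and in particular the observation that~(\ref{eq:Garnircol}), although stated for an entry of the tableau, is precisely what is needed even though $a$ is external to $S$.
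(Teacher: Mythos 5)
Your proof is correct and follows essentially the same route as the paper's: rewrite $(a,b)(a,c)$ as $(b,c)(a,b)$ so that the column transposition lands next to $\mfc_\mu(S)$, then invoke column skew-symmetry for $z_i^2$ and the Garnir relation (\ref{eq:Garnircol}) (with $b\in C_i(S)$ in the role of its distinguished entry) for $z_iz_j$, and finally sum over the columns of the blocks exactly as in the paper's bookkeeping for (\ref{eq:xixj}).
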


\begin{proof}
 We have for $1\leq i\neq j\leq\mu_1$ with $\mu'_i\leq\mu'_j$ that
\[
\begin{split}
\mfc_{\mu}(S)\cdot z_i\cdot z_j &= \mfc_{\mu}(S)\cdot\sum_{\substack{b\in C_i(S) \\ c\in C_j(S)}} (a,b)\cdot(a,c)= \mfc_{\mu}(S)\cdot\sum_{\substack{b\in C_i(S) \\ c\in C_j(S)}} (c,b)\cdot(a,b)\\
&= \mfc_{\mu}(S)\cdot\sum_{b\in C_i(S)}\left(\sum_{c\in C_j(S)} (c,b)\right)\cdot(a,b)\\
&\overset{(\ref{eq:Garnircol})}{=}\mfc_{\mu}(S)\cdot\sum_{b\in C_i(S)}(a,b)=\mfc_{\mu}(S)\cdot z_i.
\end{split}
\]
and (using the fact that $\mfc_{\mu}(S)\cdot(c,b)=-\mfc_{\mu}(S)$ if $b,c$ are in the same column of $S$)
\[
\begin{split}
\mfc_{\mu}(S)\cdot z_i^2 &= \mfc_{\mu}(S)\cdot\sum_{b,c\in C_i(S)} (a,b)\cdot(a,c)= \mfc_{\mu}(S)\cdot\left(\mu'_i+\sum_{b\neq c\in C_i(S)} (c,b)\cdot(a,b)\right)\\
&= \mfc_{\mu}(S)\cdot\left(\mu'_i-(\mu'_i-1)\cdot\sum_{b\in C_i(S)} (a,b)\right)=\mfc_{\mu}(S)\cdot(\mu'_i-(\mu'_i-1)\cdot z_i).\\
\end{split}
\]
To see how (\ref{eq:xixj}) follows from (\ref{eq:zizj}), write $\mc{C}_i=\{j:C_j(S)\subset B_i\}$ for the indices of the columns contained in the block $B_i$. We have
\[\mfc_{\mu}(S)\cdot x_i\cdot x_j=\mfc_{\mu}(S)\cdot \sum_{k\in\mc{C}_i,l\in\mc{C}_j}z_k\cdot z_l\overset{(\ref{eq:zizj})}{=}\mfc_{\mu}(S)\cdot \sum_{k\in\mc{C}_i,l\in\mc{C}_j}z_k=\mfc_{\mu}(S)\cdot l_j\cdot\sum_{k\in\mc{C}_i}z_k=\mfc_{\mu}(S)\cdot x_i\cdot l_j.\]
Using the fact that $\mu'_k=h_i$ if $k\in\mc{C}_i$ we get
\[
\begin{split}
 \mfc_{\mu}(S)\cdot x_i^2  =\ &\mfc_{\mu}(S)\cdot\left(\sum_{k\in\mc{C}_i} z_k^2 + \sum_{k\neq l\in\mc{C}_i}z_k\cdot z_l\right) \\
 \overset{(\ref{eq:zizj})}{=}&\mfc_{\mu}(S)\cdot\left(\sum_{k\in\mc{C}_i} (h_i-(h_i-1)\cdot z_k) + \sum_{k\in\mc{C}_i}z_k\cdot (l_i-1)\right) \\
 =\ &\mfc_{\mu}(S)\cdot\left(l_i\cdot h_i+\sum_{k\in\mc{C}_i} (l_i-h_i)\cdot z_k\right) = \mfc_{\mu}(S)\cdot ((l_i-h_i)\cdot x_i + l_i\cdot h_i).\qedhere
\end{split}
\]
\end{proof}

\begin{lemma}\label{lem:ccyclec}
 For $1\leq j_1<j_2<\cdots<j_k\leq\mu_1$, $k\geq 2$, and $b_{i}\in C_{j_i}(S)$ a collection of entries lying in distinct columns of $S$, we let $\s$ be the cyclic permutation $(a,b_{k},b_{k-1},\cdots,b_1)$. If $b_1,b_2$ lie in different rows of $S$ then $\mfc_{\mu}(S)\cdot\s\cdot\mfc_{\mu}(S)=0$. Otherwise, letting $\tau=(a,b_{k},b_{k-1},\cdots,b_2)$, we have $\mfc_{\mu}(S)\cdot\s\cdot\mfc_{\mu}(S)=\mfc_{\mu}(S)\cdot\t\cdot\mfc_{\mu}(S)$. As a consequence,
 \begin{equation}\label{eq:ccyclec}
  \mfc_{\mu}(S)\cdot\left(1-\frac{x_1}{r_1}\right)\cdot \mfc_{\mu}(S)=\mfc_{\mu}(S)\cdot\prod_{i=1}^{m}\left(1-\frac{x_i}{r_i}\right)\cdot \mfc_{\mu}(S).
 \end{equation}
\end{lemma}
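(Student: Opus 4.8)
The plan is to first prove the two statements about a single sandwiched cyclic permutation $\mfc_\mu(S)\cdot\sigma\cdot\mfc_\mu(S)$, and then to deduce the product identity (\ref{eq:ccyclec}) from them by expanding $\prod_{i=1}^{m}(1-x_i/r_i)$ as a linear combination of such sandwiched cyclic permutations. Throughout I would freely use that $\rho\cdot\mfc_\mu(S)=\mfc_\mu(S)$ for $\rho\in\mc{R}_S$, that $\mfb(Y)\cdot\mfa(X)=0$ whenever $|X\cap Y|\geq 2$, that the column factors of $\mfb_\mu(S)$ commute among themselves while the row factors of $\mfa_\mu(\cdot)$ commute among themselves, and the conjugation formula $\mfa_\mu(\delta S)=\delta\cdot\mfa_\mu(S)\cdot\delta^{-1}$.

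For the single-permutation statements I would start from the elementary identities $\sigma=\tau\cdot(b_1,b_2)=(a,b_1)\cdot\tau$, where the middle expression records the decomposition $\sigma=(a,b_1)(a,b_2)\cdots(a,b_k)$ of Remark~\ref{rem:prodconsec} and $\tau=(a,b_k,\ldots,b_2)$ is obtained by deleting $b_1$ from the cycle. If $b_1$ and $b_2$ lie in the same row of $S$ then $(b_1,b_2)\in\mc{R}_S$, so $(b_1,b_2)\cdot\mfc_\mu(S)=\mfc_\mu(S)$ and hence $\mfc_\mu(S)\cdot\sigma\cdot\mfc_\mu(S)=\mfc_\mu(S)\cdot\tau\cdot(b_1,b_2)\cdot\mfc_\mu(S)=\mfc_\mu(S)\cdot\tau\cdot\mfc_\mu(S)$, which is the asserted reduction.

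The different-rows case is where the real work lies. Writing $b_1=S(p_1,j_1)$ and $b_2=S(p_2,j_2)$ with $p_1\neq p_2$ and $j_1<j_2$, I would exhibit a row of $S$ that $\sigma$ collapses into a single column of $S$. Set $c=S(p_2,j_1)$; this is a legitimate cell since $\mu_{p_2}\geq j_2>j_1$. As $c$ lies in column $j_1$, it differs from $b_2,\ldots,b_k$ (which lie in columns $>j_1$) and from $b_1$ (which lies in a different row), and $c\neq a$; hence $\sigma$ fixes $c$, while $\sigma(b_2)=b_1$. Thus $\sigma(R_{p_2}(S))$ contains the two distinct entries $c$ and $b_1$ of $C_{j_1}(S)$. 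Now, using $\sigma\cdot\mfa_\mu(S)=\mfa_\mu(\sigma S)\cdot\sigma$ and $R_i(\sigma S)=\sigma(R_i(S))$, write $\mfc_\mu(S)\cdot\sigma\cdot\mfc_\mu(S)=\mfa_\mu(S)\cdot(\mfb_\mu(S)\cdot\mfa_\mu(\sigma S))\cdot\sigma\cdot\mfb_\mu(S)$; since $|C_{j_1}(S)\cap\sigma(R_{p_2}(S))|\geq 2$, the product $\mfb_\mu(S)\cdot\mfa_\mu(\sigma S)$ contains (after commuting factors) the vanishing factor $\mfb(C_{j_1}(S))\cdot\mfa(\sigma R_{p_2}(S))=0$, so the whole expression is zero.

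Finally, to prove (\ref{eq:ccyclec}) I would expand $\mfc_\mu(S)\cdot\prod_{i=1}^{m}(1-x_i/r_i)\cdot\mfc_\mu(S)$ over subsets $I=\{i_1<\cdots<i_p\}\subseteq[m]$. Each $\prod_{i\in I}x_i$ is a sum of cyclic permutations $(a,c_p,\ldots,c_1)$ with $c_j\in B_{i_j}$, so iterating the single-permutation statement shows that, after sandwiching between copies of $\mfc_\mu(S)$, a term survives only when $c_1,\ldots,c_p$ all lie in one row; counting the available choices (a row meeting the shorter block $B_{i_{j+1}}$ meets every column of the taller block $B_{i_j}$, as $i_j<i_{j+1}$) gives $\mfc_\mu(S)\cdot x_{i_1}\cdots x_{i_p}\cdot\mfc_\mu(S)=l_{i_1}\cdots l_{i_{p-1}}\cdot\mfc_\mu(S)\cdot x_{i_p}\cdot\mfc_\mu(S)$. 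Substituting this and grouping by $t=\max I$, the total contribution of the sets $I$ with $\max I=t$ equals $-\frac{1}{r_t}\,\mfc_\mu(S)\cdot x_t\cdot\mfc_\mu(S)\cdot\prod_{j=1}^{t-1}(1-l_j/r_j)$; since $r_1=l_1+h_1-h_1=l_1$ by (\ref{eq:hookri}), this vanishes for every $t\geq 2$, leaving only the contributions of $I=\emptyset$ and $I=\{1\}$, which sum to $\mfc_\mu(S)\cdot(1-x_1/r_1)\cdot\mfc_\mu(S)$. The main obstacle is the different-rows vanishing: the cancellation is global, not visible term-by-term, and the argument hinges on recognising the auxiliary entry $c=S(p_2,j_1)$ that makes a row of $S$ collapse into a column under $\sigma$; the remaining bookkeeping in the last step, together with the arithmetic fact $r_1=l_1$, is routine.
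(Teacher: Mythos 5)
Your proposal is correct and follows essentially the same route as the paper: the same-row case is handled by peeling off $(b_1,b_2)\in\mc{R}_S$, the different-rows case by exhibiting the auxiliary entry $S(p_2,j_1)$ so that $C_{j_1}(S)$ meets $\s(R_{p_2}(S))$ in two elements (forcing a $\mfb\cdot\mfa$ factor to vanish), and the identity (\ref{eq:ccyclec}) by the count of $r_1=l_1$ same-row choices in $B_1$. The only difference is organizational — you expand the full product over subsets $I\subseteq[m]$ and collapse $\mfc_{\mu}(S)\cdot x_{i_1}\cdots x_{i_p}\cdot\mfc_{\mu}(S)$ to $l_{i_1}\cdots l_{i_{p-1}}\cdot\mfc_{\mu}(S)\cdot x_{i_p}\cdot\mfc_{\mu}(S)$ before regrouping, whereas the paper isolates the factor $(r_1-x_1)$ and shows it annihilates each sandwiched cyclic term directly — but the underlying mechanism is identical.
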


\begin{proof}
 Suppose first that $b_1,b_2$ lie in distinct rows of $S$, say $b_2\in R_s(S)$, and let $x=S(s,j_1)$. Since $x\neq b_1$, $\s(x)=x$. Since $\mu'_{j_1}!\cdot\mfc_{\mu}(S)=\mfc_{\mu}(S)\cdot\mfb(C_{j_1}(S))$ and $\mu_{s}!\cdot\mfc_{\mu}(S)=\mfa(R_{s}(S))\cdot\mfc_{\mu}(S)$, it is enough to prove that
\[\mfb(C_{j_1}(S))\cdot\s\cdot\mfa(R_{s}(S))=\mfb(C_{j_1}(S))\cdot\mfa(\s(R_{s}(S)))\cdot\s=0.\]
The last equality holds true because the intersection $C_{j_1}(S)\cap\s(R_{s}(S))$ contains at least two elements, namely $x=\s(x)$ and $b_{1}=\s(b_{2})$.

Suppose now that $b_1,b_2$ belong to the same row of $S$. Since $(b_1,b_2)\in\mc{R}_S$, it follows that $(b_1,b_2)\cdot\mfc_{\mu}(S)=\mfc_{\mu}(S)$. Since $\s=\t\cdot(b_1,b_2)$, we obtain
\[\mfc_{\mu}(S)\cdot\s\cdot\mfc_{\mu}(S)=\mfc_{\mu}(S)\cdot\t\cdot(b_1,b_2)\cdot\mfc_{\mu}(S)=\mfc_{\mu}(S)\cdot\t\cdot\mfc_{\mu}(S).\]

To prove (\ref{eq:ccyclec}) note that it is equivalent, after subtracting the right hand side from the left and multiplying by $r_1$, to
\[\mfc_{\mu}(S)\cdot(r_1-x_1)\cdot\left(1-\prod_{i=2}^{m}\left(1-\frac{x_i}{r_i}\right)\right)\cdot\mfc_{\mu}(S)=0.\]
The left hand side expands into an expression (with coefficients $c_{\ul{b}}\in\bb{Q}$)
\[\mfc_{\mu}(S)\cdot\left(r_1-\sum_{b_1\in B_1}(a,b_1)\right)\cdot\left(\sum_{\substack{2\leq k\leq m\\  2\leq j_2<\cdots<j_k\leq m\\ b_{i}\in B_{j_i}}} c_{\ul{b}}\cdot(a,b_2)\cdot(a,b_{3})\cdots(a,b_{k})\right)\cdot\mfc_{\mu}(S).\]
Fix now $2\leq k\leq m$, $2\leq j_2<\cdots<j_k\leq m$, and $b_i\in B_{j_i}$ for $2\leq i\leq k$. Letting $j_1=1$, we have by the first part of the lemma that
\[\mfc_{\mu}(S)\cdot(a,b_1)\cdot(a,b_2)\cdot(a,b_{3})\cdots(a,b_{k})\cdot\mfc_{\mu}(S)=\mfc_{\mu}(S)\cdot(a,b_{k},\cdots,b_{3},b_2,b_1)\cdot\mfc_{\mu}(S)=0\]
if $b_1$ is not in the same row as $b_2$, and (with the previous notation for $\t$ and $\s$)
\begin{equation}\label{eq:cmutscmu}
\mfc_{\mu}(S)\cdot(1-(a,b_1))\cdot(a,b_2)\cdot(a,b_{3})\cdots(a,b_{k})\cdot\mfc_{\mu}(S)=\mfc_{\mu}(S)\cdot(\t-\s)\cdot\mfc_{\mu}(S)=0 
\end{equation}
if $b_1$ and $b_2$ are in the same row of $S$. Since there are exactly $r_1=l_1$ elements $b_1\in B_1$ lying in the same row as $b_2$, the conclusion follows by summing (\ref{eq:cmutscmu}) over all such $b_1$'s.
\end{proof}
 
\begin{proof}[Proof of Lemma~\ref{lem:betterprodconsec}] Both (\ref{eq:czv}) and (\ref{eq:czvc}) are trivially satisfied when $\mu'_v>\mu'_{v+1}$ because in this case $x_1=z_v$, $r_1=1$, $x_i=\tl{x}_{i-1}$ and $r_i=\tl{r}_{i-1}$ for $i>1$. We may then assume that $\mu'_v=\mu'_{v+1}$.

% To prove (\ref{eq:czvc}) it suffices to show that \claudiu{need to include a proof for this, and also for (3.6)} $\mfc_{\mu}(S)\cdot z_v\cdot \mfc_{\mu}(S)=\mfc_{\mu}(S)\cdot z_j\cdot \mfc_{\mu}(S)$ for all $v\leq j\leq v+l_1-1$, i.e. for $j$ such that $C_j(S)$ is contained in $B_1$. If this is the case, then
% \[\mfc_{\mu}(S)\cdot z_v\cdot \mfc_{\mu}(S)=\frac{1}{l_1}\cdot\sum_{j=v}^{v+l_1-1}\mfc_{\mu}(S)\cdot z_j\cdot \mfc_{\mu}(S)=\frac{1}{r_1}\cdot\mfc_{\mu}(S)\cdot x_1\cdot \mfc_{\mu}(S),\]
% where the last equality follows from the fact that $r_1=l_1$ and $x_1=\sum_{j=v}^{v+l_1-1}z_j$.

To prove (\ref{eq:czv}), since $\tl{x}_i=x_i$ and $\tl{r}_i=r_i$ for $i>1$, it is enough to show that
\begin{equation}\label{eq:czvshort}
 \mfc_{\mu}(S)\cdot(1-z_v)\cdot\left(1-\frac{\tl{x}_1}{\tl{r}_1}\right)=\mfc_{\mu}(S)\cdot\left(1-\frac{x_1}{r_1}\right).
\end{equation}
Since $\tl{x}_1=\sum_{j=v+1}^{v+l_1-1} z_j$, and $\mu'_v=\mu'_j$ for $v+1\leq j\leq v+l_1-1$, we get from (\ref{eq:zizj}) that 
\[\mfc_{\mu}(S)\cdot z_v\cdot\tl{x}_1=\mfc_{\mu}(S)\cdot z_v\cdot(l_1-1).\]
Using the relation above together with the fact that $\tl{r}_1=\tl{l}_1+1=l_1=r_1$, and $x_1=\tl{x}_1+z_v$,
\[\mfc_{\mu}(S)\cdot(1-z_v)\cdot\left(1-\frac{\tl{x}_1}{\tl{r}_1}\right)=\mfc_{\mu}(S)\cdot\left(1-z_v-\frac{\tl{x}_1}{l_1}+z_v\cdot\frac{l_1-1}{l_1}\right)=\mfc_{\mu}(S)\cdot\left(1-\frac{x_1}{r_1}\right),
\]
as desired. To prove (\ref{eq:czvc}), we multiply (\ref{eq:czvshort}) on the right by $\mfc_{\mu}(S)$, and it remains to show that
\[\mfc_{\mu}(S)\cdot(1-z_v)\cdot z_j\cdot\mfc_{\mu}(S)=0,\]
for $v+1\leq j\leq v+l_1-1$. We have
\[
\begin{split}
\mfc_{\mu}(S)\cdot z_v\cdot z_j\cdot\mfc_{\mu}(S)&=\sum_{\substack{b_1\in C_v(S)\\ b_2\in C_j(S)}}\mfc_{\mu}(S)\cdot (a,b_1)\cdot (a,b_2)\cdot\mfc_{\mu}(S)=\sum_{\substack{b_1\in C_v(S)\\ b_2\in C_j(S)}}\mfc_{\mu}(S)\cdot(a,b_2,b_1)\cdot\mfc_{\mu}(S)\\
&=\sum_{b_2\in C_j(S)}\mfc_{\mu}(S)\cdot(a,b_2)\cdot\mfc_{\mu}(S)=\mfc_{\mu}(S)\cdot z_j\cdot\mfc_{\mu}(S),
\end{split}
\]
where the second to last equality follows from Lemma~\ref{lem:ccyclec} and the fact that for each $b_2\in C_j(S)$ there exists exactly one $b_1\in C_v(S)$ situated in the same row.
\end{proof}

\begin{proof}[Proof of Theorem \ref{thm:yngconsec}] Let
\begin{equation}\label{eq:Pm}
 P_m=(x_1-r_1)\cdots(x_m-r_m).
\end{equation}
Using (\ref{eq:ccyclec}) we see that (\ref{eq:consec}) is equivalent, after multiplying by $(-1)^m r_1\cdots r_m$, to
\begin{equation}\label{eq:newconsec}
\mfa_{\ll}(T)\cdot \mfc_{\mu}(S)\cdot P_m\cdot \mfc_{\mu}(S)=\mfa_{\ll}(T)\cdot \mfc_{\mu}(S)\cdot\a_{\mu}\cdot P_m.
\end{equation}
Let
\begin{equation}\label{eq:sumrightcols}
X=\sum_{j=v}^{\mu_1} z_j=\sum_{i=1}^m x_i.
\end{equation}
We will show in Lemma \ref{lem:relations} that there exists a polynomial $Q(X)(=Q_m)$ with the property
\begin{equation}\label{eq:PmisQX}
\mfa_{\ll}(T)\cdot\mfc_{\mu}(S)\cdot P_m = \mfa_{\ll}(T)\cdot\mfc_{\mu}(S)\cdot Q(X).
\end{equation}
Using this, (\ref{eq:newconsec}) is equivalent to
\[\mfa_{\ll}(T)\cdot \mfc_{\mu}(S)\cdot Q(X)\cdot \mfc_{\mu}(S)=\mfa_{\ll}(T)\cdot \mfc_{\mu}(S)\cdot\a_{\mu}\cdot Q(X),\]
which is proved in Lemma \ref{lem:sumrightcomm} below.
\end{proof}
 
\begin{lemma}\label{lem:leftcol}
 If $z_j$ is as in (\ref{eq:colsum}) for some $j<v$, and $\s\in\S_n$ is a permutation that fixes all the entries of $S$ contained in its first $(v-1)$ columns, then 
\begin{equation}\label{eq:orthsigzj}
 \mfa_{\ll}(T)\cdot \mfc_{\mu}(S)\cdot\s\cdot (1-z_j)=0. 
\end{equation}
\end{lemma}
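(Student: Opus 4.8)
The plan is to strip the permutation $\s$ out of the middle of the product by conjugating, and then to settle the remaining identity by a short case split on where $\s$ sends the extra entry $a$. Since by hypothesis $\s$ fixes pointwise every entry of $S$ in its first $v-1$ columns, and $j<v$, it fixes $C_j(S)$ pointwise, so $\s z_j\s^{-1}=\sum_{b\in C_j(S)}(\s(a),b)$. Writing $c=\s(a)$ and $z_j'=\sum_{b\in C_j(S)}(c,b)$ we get $\s\cdot(1-z_j)=(1-z_j')\cdot\s$, hence $\mfa_{\ll}(T)\cdot\mfc_{\mu}(S)\cdot\s\cdot(1-z_j)=\mfa_{\ll}(T)\cdot\mfc_{\mu}(S)\cdot(1-z_j')\cdot\s$, and it is enough to show that $\mfa_{\ll}(T)\cdot\mfc_{\mu}(S)\cdot(1-z_j')=0$. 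Now $\s$ stabilizes the complement in $[n]$ of the union of the first $v-1$ columns of $S$, and the unique entry $a$ of $T$ outside $S$ lies in that complement, so $c$ does too; thus either $c=a$, or $c$ is an entry of $S$ lying in some column $C_{j_0}(S)$ with $j_0\ge v$.

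Suppose first that $c\in C_{j_0}(S)$ with $j_0\ge v$. Then $j_0\ne j$ and $\mu'_{j_0}\le\mu'_j$ (because $j<v\le j_0$), so the Garnir relation~(\ref{eq:Garnircol}), applied to the tableau $S$ with columns $j_0$ and $j$ in the roles of $i$ and $j$ and with distinguished entry $c\in C_{j_0}(S)$, gives $\mfc_{\mu}(S)\cdot(1-z_j')=0$ outright. Suppose instead that $c=a$, so $z_j'=z_j$; then the Garnir relation for $S$ is unavailable because $a\notin D_{\mu}$, and I would use the extra row symmetry of $T$ instead. By~(\ref{eq:Youngsymm}) and~(\ref{eq:absymmetrizers}) one has $\mfa_{\ll}(T)\cdot\mfc_{\mu}(S)=\mu!\cdot\mfa_{\ll}(T)\cdot\mfb_{\mu}(S)$, and, writing $\mfb_{\mu}(S)=\mfb'\cdot\mfb(C_j(S))$ with $\mfb'=\prod_{j'\ne j}\mfb(C_{j'}(S))$ (the column antisymmetrizers commute pairwise, since distinct columns are disjoint) and using~(\ref{eq:symmskewsymmadda}) to absorb the factor $(1-z_j)$, the claim reduces to $\mfa_{\ll}(T)\cdot\mfb'\cdot\mfb(C_j(S)\cup\{a\})=0$. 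Since $j<v=\mu_u+1$ we have $j\le\mu_u$, so $(u,j)\in D_{\mu}$ and the transposition $\t=(a,S(u,j))$ is defined. I would then check its three relevant properties: the entries $a$ and $S(u,j)$ both lie in the row $R_u(T)=R_u(S)\cup\{a\}$ of $T$, so $\mfa_{\ll}(T)\cdot\t=\mfa_{\ll}(T)$; the pair $\{a,S(u,j)\}$ meets no column $C_{j'}(S)$ with $j'\ne j$, so $\t$ commutes with $\mfb'$; and $\{a,S(u,j)\}\subset C_j(S)\cup\{a\}$, so $\t\cdot\mfb(C_j(S)\cup\{a\})=-\mfb(C_j(S)\cup\{a\})$. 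Inserting $\t$ on the left and pushing it across then makes the expression equal to its own negative, so it vanishes (the field has characteristic zero). In both cases $\mfa_{\ll}(T)\cdot\mfc_{\mu}(S)\cdot(1-z_j')=0$, as needed.

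The step I expect to be the main obstacle is recognizing that the naive approach — commute $\s$ past $\mfc_{\mu}(S)$ and finish with a single Garnir relation for $S$ — fails, because $\s$ may move the added entry $a$. It is exactly the resulting case $c=a$ that forces one to invoke the row-$u$ symmetry of the larger tableau $T$ rather than that of $S$, and this is the only place where the hypothesis $j<v$ (equivalently $(u,j)\in D_{\mu}$) is genuinely used; the rest is routine manipulation with the relations assembled in Section~\ref{sec:prelim}.
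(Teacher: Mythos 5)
Your proof is correct and follows essentially the same route as the paper's: conjugate $\s$ out of the middle, split on whether $\s(a)=a$, dispatch the case $\s(a)\neq a$ with the Garnir relation (\ref{eq:Garnircol}), and in the case $\s(a)=a$ exploit that $a$ and $S(u,j)$ share row $u$ of $T$ while forming the augmented column $C_j(S)\cup\{a\}$. The only cosmetic difference is the final computation, where the paper expands $\mfc_{\mu}(S)$ term by term and invokes $\mfa(X)\cdot \mfb(Y)=0$ for $|X\cap Y|\ge 2$, whereas you absorb $\mfa_{\mu}(S)$ into $\mfa_{\ll}(T)$ via (\ref{eq:absymmetrizers}) and finish with a single transposition giving a sign argument.
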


\begin{proof} If $\s(a)\neq a$, it follows that $\s(a)$ is contained in $C_i(S)$ for some $i$ with $\mu'_i\leq\mu'_j$. Since
\[\s\cdot(1-z_j)\cdot\s^{-1}=\left(1-\sum_{x\in C_j(S)}(\s(a),x)\right),\]
(\ref{eq:orthsigzj}) follows from the Garnir relation (\ref{eq:Garnircol}).

We may thus assume that $\s(a)=a$, so $\s\cdot(1-z_j)\cdot\s^{-1}=(1-z_j)$ and we are reduced to the case when $\s$ is the identity. Since $\mu'_j!\cdot \mfc_{\mu}(S)=\mfc_{\mu}(S)\cdot \mfb(C_j(S))$ and $\ll_{u}!\cdot \mfa_{\ll}(T)=\mfa_{\ll}(T)\cdot \mfa(R_u(T))$, it suffices to show that
\[\mfa(R_u(T))\cdot \mfc_{\mu}(S)\cdot \mfb(C_j(S))\cdot (1-z_j)\overset{(\ref{eq:symmskewsymmadda})}{=}\mfa(R_u(T))\cdot \mfc_{\mu}(S)\cdot \mfb(C_j(S)\cup\{a\})=0.\]
We show that in fact for any permutation $\t$ appearing in the expansion of $\mfc_{\mu}(S)$, i.e. $\t=\t_1\t_2$ with $\t_1\in\mc{R}_S$, $\t_2\in\mc{C}_S$, we have $\mfa(R_u(T))\cdot \t\cdot \mfb(C_j(S)\cup\{a\})=0$, or equivalently \[\mfa(R_u(T))\cdot \t\cdot \mfb(C_j(S)\cup\{a\})\cdot\t^{-1}=\mfa(R_u(T))\cdot \mfb(\t(C_j(S)\cup\{a\}))=0.\]
For this it is enough to prove that $|R_u(T)\cap\t(C_j(S)\cup\{a\})|\geq 2$. Since $\t(a)=a$ and $a\in R_u(T)$, it follows that $a$ is always an element of the intersection. Write $b$ for the $(u,j)$-entry of $S$, and observe that since $\t_2\in\mc{C}_S$, it induces a permutation of $C_j(S)$, so $b\in\t_2(C_j(S))$. Since $\t_1\in\mc{R}_S$ it follows that $\t_1(b)\in R_u(S)\subset R_u(T)$. It follows that the intersection $R_u(T)\cap\t(C_j(S)\cup\{a\})$ contains $\{a,\t_1(b)\}$, as desired.
\end{proof}

\begin{lemma}\label{lem:sumallcomm}
 Consider $\s\in \S_n$ with $\s(a)=a$, and for $z_j$ as in (\ref{eq:colsum}) let
\begin{equation}\label{eq:sumallcols}
Z=\sum_{j=1}^{\mu_1} z_j=\sum_{b\in S}(a,b). 
\end{equation}
We have $\s\cdot Z=Z\cdot\s$, and as a consequence $\mfc_{\mu}(S)\cdot Z=Z\cdot \mfc_{\mu}(S)$.
\end{lemma}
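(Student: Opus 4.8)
The plan is to reduce both assertions to the elementary observation that conjugation by a permutation fixing $a$ merely relabels the transpositions occurring in $Z$. First I would record that, since the unique entry of $T$ outside $S$ is $a$, the set $A_0$ of entries of $S$ is exactly $[n]\setminus\{a\}$; hence $Z=\sum_{b\in S}(a,b)=\sum_{c\in A_0}(a,c)$ runs over \emph{all} transpositions of $\S_n$ that involve $a$.

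Next, for $\s\in\S_n$ with $\s(a)=a$ I would compute, using $\s\cdot(a,b)\cdot\s^{-1}=(\s(a),\s(b))$,
\[
\s\cdot Z\cdot\s^{-1}=\sum_{b\in S}\s\cdot(a,b)\cdot\s^{-1}=\sum_{b\in S}(\s(a),\s(b))=\sum_{b\in S}(a,\s(b)).
\]
Because $\s$ fixes $a$, it restricts to a bijection of $A_0$, so $\{\s(b):b\in S\}=A_0$ and the last sum is again $\sum_{c\in A_0}(a,c)=Z$. Thus $\s\cdot Z\cdot\s^{-1}=Z$, i.e. $\s\cdot Z=Z\cdot\s$, which is the first claim. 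For the consequence I would recall from (\ref{eq:Youngsymm}) that $\mfc_{\mu}(S)=\mfa_{\mu}(S)\cdot\mfb_{\mu}(S)$ is a $\K$-linear combination of permutations lying in $\mc{R}_S\cdot\mc{C}_S\subset\S_{A_0}$, each of which fixes $a$ (as $a\notin A_0$); applying $\s\cdot Z=Z\cdot\s$ termwise and summing yields $\mfc_{\mu}(S)\cdot Z=Z\cdot\mfc_{\mu}(S)$.

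There is no genuine obstacle in this lemma; the only point that deserves (minimal) care is identifying the entry set of $S$ with $[n]\setminus\{a\}$, which is precisely what makes $Z$ invariant under conjugation by the entire stabilizer of $a$ — even though the statement only needs invariance under $\S_{A_0}$.
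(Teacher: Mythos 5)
Your proof is correct and is essentially identical to the paper's: both compute $\s\cdot Z\cdot\s^{-1}$ termwise via $\s(a,b)\s^{-1}=(a,\s(b))$, use that $\s$ permutes the entry set of $S$ (which, since $n=k+1$, is $[n]\setminus\{a\}$) to conclude $\s Z\s^{-1}=Z$, and then observe that every permutation in the expansion of $\mfc_{\mu}(S)$ fixes $a$. Your explicit identification of the entries of $S$ with $[n]\setminus\{a\}$ is a point the paper leaves implicit, but the argument is the same.
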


\begin{proof}
 This relation $\s\cdot Z=Z\cdot\s$ follows from the fact that
\[\s\cdot Z\cdot\s^{-1}=\sum_{b\in S} \s\cdot(a,b)\cdot\s^{-1}=\sum_{b\in S}(a,\s(b))=\sum_{b\in S}(a,b)=Z.\]
For the consequence, note that all $\s$ in the expansion of $\mfc_{\mu}(S)$ satisfy $\s(a)=a$.
\end{proof}

\begin{lemma}\label{lem:sumrightcomm}
 With $z_j$ as in (\ref{eq:colsum}), and $X$ as in (\ref{eq:sumrightcols}), we have that for any polynomial $P(X)$
\begin{equation}\label{eq:PXcomm}
\mfa_{\ll}(T)\cdot \mfc_{\mu}(S)\cdot\a_{\mu}\cdot P(X)=\mfa_{\ll}(T)\cdot \mfc_{\mu}(S)\cdot P(X)\cdot \mfc_{\mu}(S). 
\end{equation}
\end{lemma}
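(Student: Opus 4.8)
The plan is to deduce (\ref{eq:PXcomm}) from the two facts already available for the \emph{full} column sum $Z=\sum_{j=1}^{\mu_1}z_j$ of (\ref{eq:sumallcols}): by Lemma~\ref{lem:sumallcomm} the element $Z$, hence any polynomial in $Z$, commutes with $\mfc_{\mu}(S)$, and by (\ref{eq:idempyngsymm}) we have $\mfc_{\mu}(S)^2=\a_{\mu}\cdot\mfc_{\mu}(S)$. Since (\ref{eq:PXcomm}) is linear in $P$, it is enough to handle the monomials $P(X)=X^k$. Throughout I abbreviate $w=\mfa_{\ll}(T)\cdot\mfc_{\mu}(S)$ and $Y=\sum_{j=1}^{v-1}z_j$ (so $Y=0$ when $v=1$), so that $Z=X+Y$ with $X$ as in (\ref{eq:sumrightcols}).

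The heart of the argument is the claim that, once multiplied by $w$ on the left, right multiplication by $X$ may be traded for right multiplication by the element $Z-(v-1)\cdot\1$ of $\K[\S_n]$, which is a polynomial in $Z$; precisely,
\[
w\cdot X^k = w\cdot\bigl(Z-(v-1)\cdot\1\bigr)^k\qquad\text{for all }k\geq 0.
\]
To prove this I would induct on $k$, the case $k=0$ being trivial. Every permutation appearing when $X^k$ is expanded is a product of transpositions $(a,b)$ with $b$ in columns $v,\dots,\mu_1$ of $S$, hence is supported on $\{a\}$ together with entries of $S$ in columns $\geq v$; in particular it fixes every entry of $S$ in its first $v-1$ columns. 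So Lemma~\ref{lem:leftcol} applies to each such permutation and yields $w\cdot X^k\cdot(1-z_j)=0$ for every $j<v$; summing over $j=1,\dots,v-1$ gives $w\cdot X^k\cdot Y=(v-1)\cdot w\cdot X^k$. Therefore $w\cdot X^{k+1}=w\cdot X^k\cdot(Z-Y)=w\cdot X^k\cdot\bigl(Z-(v-1)\cdot\1\bigr)$, and the inductive hypothesis, together with the fact that powers of $Z-(v-1)\cdot\1$ commute, finishes the induction.

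Granting the claim, the conclusion is immediate: by linearity $w\cdot P(X)=w\cdot Q(Z)$ with $Q(Z)=P\bigl(Z-(v-1)\cdot\1\bigr)$ a polynomial in $Z$, so using that $Q(Z)$ commutes with $\mfc_{\mu}(S)$ and that $\mfc_{\mu}(S)^2=\a_{\mu}\cdot\mfc_{\mu}(S)$,
\[
\mfa_{\ll}(T)\cdot\mfc_{\mu}(S)\cdot P(X)\cdot\mfc_{\mu}(S)=\mfa_{\ll}(T)\cdot\mfc_{\mu}(S)^2\cdot Q(Z)=\a_{\mu}\cdot\mfa_{\ll}(T)\cdot\mfc_{\mu}(S)\cdot Q(Z)=\a_{\mu}\cdot\mfa_{\ll}(T)\cdot\mfc_{\mu}(S)\cdot P(X),
\]
which is exactly (\ref{eq:PXcomm}). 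The only real obstacle is spotting the right normal form in the middle step: one should not try to commute $X$ itself past $\mfc_{\mu}(S)$, since it does not commute, but instead observe that after being hit by $w$ on the left, $X$ may be replaced by the genuine $Z$-polynomial $Z-(v-1)\cdot\1$; checking that Lemma~\ref{lem:leftcol} legitimately applies to every monomial of $X^k$ is then just the bookkeeping remark that such a monomial moves only $a$ and entries of $S$ in columns $\geq v$.
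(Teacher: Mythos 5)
Your proof is correct and uses the same key ingredients as the paper's: Lemma~\ref{lem:leftcol} to trade right multiplication by $X$ for right multiplication by $Z-(v-1)\cdot\1$ after hitting with $\mfa_{\ll}(T)\cdot\mfc_{\mu}(S)$ on the left, Lemma~\ref{lem:sumallcomm} to commute the resulting polynomial in $Z$ past $\mfc_{\mu}(S)$, and quasi-idempotence (\ref{eq:idempyngsymm}). The only difference is organizational: the paper inducts directly on the identity (\ref{eq:PXcomm}) for $P(X)=X^t$, performing the trade one factor at a time, whereas you first establish the clean normal form $\mfa_{\ll}(T)\cdot\mfc_{\mu}(S)\cdot X^k=\mfa_{\ll}(T)\cdot\mfc_{\mu}(S)\cdot(Z-(v-1)\cdot\1)^k$ and then conclude in one step.
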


\begin{proof} By linearity, it suffices to prove (\ref{eq:PXcomm}) when $P(X)=X^t$. We argue by induction on $t$: for $t=0$, the result follows from (\ref{eq:idempyngsymm}). Assume now that $t>0$. Expanding $X^{t-1}$, we get a linear combination of permutations $\s$ that fix all the entries in the first $(v-1)$ columns of $S$. Using Lemma \ref{lem:leftcol} and letting $Z$ as in (\ref{eq:sumallcols}), we get 
\begin{equation}\label{eq:XtoZ}
\begin{aligned}
\mfa_{\ll}(T)\cdot \mfc_{\mu}(S)\cdot X^t &=\mfa_{\ll}(T)\cdot \mfc_{\mu}(S)\cdot X^{t-1}\cdot\left(X+\sum_{j=1}^{v-1}(z_j-1)\right)\\
&=\mfa_{\ll}(T)\cdot \mfc_{\mu}(S)\cdot X^{t-1}\cdot (Z-v+1).
\end{aligned}
\end{equation}
Multiplying both sides by $\a_{\mu}$ and using the induction hypothesis and the fact that $Z$ and $\mfc_{\mu}(S)$ commute (Lemma~\ref{lem:sumallcomm}), we get
\[
\begin{aligned}
\mfa_{\ll}(T)\cdot \mfc_{\mu}(S)\cdot\a_{\mu}\cdot X^t &=\mfa_{\ll}(T)\cdot \mfc_{\mu}(S)\cdot\a_{\mu}\cdot X^{t-1}\cdot(Z-v+1) \\
&=\mfa_{\ll}(T)\cdot \mfc_{\mu}(S)\cdot X^{t-1}\cdot \mfc_{\mu}(S)\cdot(Z-v+1) \\
&=\mfa_{\ll}(T)\cdot \mfc_{\mu}(S)\cdot X^{t-1}\cdot(Z-v+1)\cdot \mfc_{\mu}(S)
\end{aligned}
\]
Applying (\ref{eq:XtoZ}) again to the last term of the equality yields the desired conclusion.
\end{proof}

The last ingredient of the proof of Theorem \ref{thm:yngconsec} is (\ref{eq:PmisQX}), which we prove in the rest of this section. We define for $t=1,\cdots,m$ (recall the definition of $l_i,h_i$ and $r_i$ from (\ref{eq:hookri}))
\begin{equation}\label{eq:defsijQt}
\begin{aligned}
s_i^t=l_1+\cdots+l_i - h_{i+1}&\rm{ for } i=1,\cdots,t-1,\ s_t^t=l_1+\cdots+l_t,\\
X_t=x_1+x_2+\cdots+x_t,&\quad Q_t=(X_t-s_1^t)\cdot(X_t-s_2^t)\cdots(X_t-s_t^t),\\
P_t=(x_1-&r_1)\cdot(x_2-r_2)\cdots(x_t-r_t).
\end{aligned}
\end{equation}
Note that $X_m=X$ (as defined in (\ref{eq:sumrightcols})), that $Q_t$ is a polynomial in $X_t$, and that the definition of $P_m$ coincides with that in (\ref{eq:Pm}). For an element $f\in\K[\S_n]$, we define its \defi{right annihilator} by
\[RAnn(f)=\{p\in\K[\S_n]:f\cdot p=0\}.\]
We define a congruence relation $\con$ on $\K[\S_n]$ by
\begin{equation}\label{eq:defcon}
\begin{aligned}
f\con g &\Longleftrightarrow (f-g)\in I=\bigcap_{i\geq 0}RAnn(\mfa_{\ll}(T)\cdot c_{\mu}(S)\cdot X^i) \\
&\Longleftrightarrow\mfa_{\ll}(T)\cdot c_{\mu}(S)\cdot X^i\cdot f=\mfa_{\ll}(T)\cdot c_{\mu}(S)\cdot X^i\cdot g\ \forall\ i\geq 0.
\end{aligned}
\end{equation}
Since $I$ is a right ideal, $f\con g$ implies $fh\con gh$, but in general $hf\not\con hg$. However, for any polynomial $P(X)$ (where $X$ is as defined in (\ref{eq:sumrightcols})) we have
\begin{equation}\label{eq:PXfconPXg}
f\con g\Longrightarrow P(X)\cdot f\con P(X)\cdot g. 
\end{equation}
Moreover, we have
\begin{equation}\label{eq:conbyanncS}
 c_{\mu}(S)\cdot f=c_{\mu}(S)\cdot g\Longrightarrow f\con g.
\end{equation}
This is because (\ref{eq:defcon}) is equivalent after multiplying both sides by $\a_{\mu}$ and using (\ref{eq:PXcomm}) to
\[f\con g\Longleftrightarrow\mfa_{\ll}(T)\cdot c_{\mu}(S)\cdot X^i\cdot c_{\mu}(S)\cdot f=\mfa_{\ll}(T)\cdot c_{\mu}(S)\cdot X^i\cdot c_{\mu}(S)\cdot g\ \forall\ i\geq 0.\]
It follows from (\ref{eq:xixj}) and (\ref{eq:conbyanncS}) that
\begin{equation}\label{eq:conxixj}
\begin{aligned}
& x_i\cdot x_j \con x_i\cdot l_j,\\
& x_i^2 \con(l_i-h_i)\cdot x_i + l_i\cdot h_i.
\end{aligned}
\end{equation}

We will show that $P_m\con Q(X)$ for some polynomial $Q$ (namely $Q(X)=Q_m$), which will imply (\ref{eq:PmisQX}) and thus conclude the proof of Theorem \ref{thm:yngconsec}. We will prove by induction on $t$ the following

\begin{lemma}\label{lem:relations}
 With $\con$ as defined in (\ref{eq:defcon}), the following relations hold for $1\leq t\leq m$:
\begin{enumerate}
 \item[$(\rm{a}_t)$] $P_t\con Q_t$.
 \item[$(\rm{b}_t)$] $P_t\cdot(X_t+h_1) \con (X_t+h_1)\cdot P_t \con 0$.
\end{enumerate}
\end{lemma}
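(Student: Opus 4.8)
The plan is to prove $(\rm{a}_t)$ and $(\rm{b}_t)$ simultaneously by induction on $t$, using throughout the calculus of the congruence $\con$ of (\ref{eq:defcon}): it is preserved under right multiplication by an arbitrary element of $\K[\S_n]$ (since $I$ is a right ideal) and under left multiplication by a polynomial in $X=x_1+\cdots+x_m$ (by (\ref{eq:PXfconPXg})), while the relations $x_ix_j\con l_jx_i$ for $j<i$ and $x_i^2\con(l_i-h_i)x_i+l_ih_i$ (equivalently $(x_i-l_i)(x_i+h_i)\con 0$) of (\ref{eq:conxixj}) serve as seeds which, via right multiplication, let one rewrite such a product wherever it sits at the left of an expression; summing the first relation over $j=1,\dots,t$ gives $x_{t+1}X_t\con(l_1+\cdots+l_t)\,x_{t+1}$. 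Write $L_i=l_1+\cdots+l_i$, so that $r_i=L_i+h_1-h_i$ (in particular $r_1=l_1$), while $s^t_i=L_i-h_{i+1}$ for $i<t$ and $s^t_t=L_t$. The base case $t=1$ is immediate: $P_1=x_1-r_1=x_1-l_1=X_1-s^1_1=Q_1$ is $(\rm{a}_1)$, and $(\rm{b}_1)$ is the relation $(x_1-l_1)(x_1+h_1)\con 0$ read in either order (rewriting $x_1^2$ at the left).

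Next I would settle the left half of $(\rm{b}_t)$, which in fact holds for every $t$ with no induction. Since $x_1-r_1$ is the leftmost factor of $P_t$, it suffices to see that $(X_t+h_1)(x_1-r_1)\con 0$; expanding $X_t=x_1+\cdots+x_t$ and rewriting each summand at its left — $x_i(x_1-r_1)\con(l_1-r_1)x_i=0$ for $i\geq 2$ by $x_ix_1\con l_1x_i$, and $x_1(x_1-r_1)=x_1^2-r_1x_1\con -h_1(x_1-r_1)$ by $x_1^2\con(l_1-h_1)x_1+l_1h_1$ together with $r_1=l_1$ — everything cancels against $h_1(x_1-r_1)$.

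For the inductive step, assume $(\rm{a}_t)$ and $(\rm{b}_t)$. Combining them, $Q_t(X_t)(X_t+h_1)\con P_t(X_t+h_1)\con 0$, so $Q_t(X_t)X_t\con -h_1Q_t(X_t)$; this is the device that bounds the degree in $X_t$. By $(\rm{a}_t)$, $P_{t+1}=P_t(x_{t+1}-r_{t+1})\con Q_t(X_t)(x_{t+1}-r_{t+1})$, so $(\rm{a}_{t+1})$ reduces to $Q_{t+1}(X_{t+1})\con Q_t(X_t)(x_{t+1}-r_{t+1})$. The heart is to rewrite $Q_{t+1}(X_{t+1})$ — with roots $s^{t+1}_i=L_i-h_{i+1}$ for $i\leq t$ and $s^{t+1}_{t+1}=L_{t+1}$ — by putting $X_{t+1}=X_t+x_{t+1}$ and repeatedly using $x_{t+1}X_t\con L_tx_{t+1}$ and $x_{t+1}^2\con(l_{t+1}-h_{t+1})x_{t+1}+l_{t+1}h_{t+1}$. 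A short direct expansion (using $r_{t+1}=L_{t+1}+h_1-h_{t+1}$) yields the central identity
\[
(X_{t+1}-s^{t+1}_t)(X_{t+1}-s^{t+1}_{t+1})\con (X_t-L_t)\bigl((x_{t+1}-r_{t+1})+(X_t+h_1)\bigr),
\]
which trades the last two factors of $Q_{t+1}$ for $(X_t-L_t)$, the last factor of $Q_t$. The remaining factors $\prod_{i<t}(X_{t+1}-s^{t+1}_i)=\prod_{i<t}(X_{t+1}-s^t_i)$ are matched with $\prod_{i<t}(X_t-s^t_i)$ by pulling them out one at a time from the left: replacing a factor $X_{t+1}-s^t_i$ by $X_t-s^t_i$ costs an extra term $x_{t+1}\cdot(\text{tail})$, which is pushed through the tail using the two relations above, its surviving $x_{t+1}$ killed by $x_{t+1}(X_t-L_t)\con 0$, and its intermediate degrees controlled by $Q_t(X_t)X_t\con -h_1Q_t(X_t)$. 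What is left is $Q_t(X_t)(x_{t+1}-r_{t+1})+Q_t(X_t)(X_t+h_1)$, and the second summand is $\con 0$ by $(\rm{b}_t)$; the accumulated scalar identity in the $l_i$ and $h_i$ is true by the definitions of $r_i$ and $s^t_i$. Finally, for $(\rm{b}_{t+1})$ the left half is the uniform argument above, and for the right half one replaces $P_{t+1}$ by $Q_{t+1}(X_{t+1})$ using $(\rm{a}_{t+1})$ and checks $Q_{t+1}(X_{t+1})(X_{t+1}+h_1)\con 0$ from the same expansion — here the extra $(X_{t+1}+h_1)$ annihilates the new top term in $X_{t+1}$ once $(\rm{b}_t)$ and the quadratic relation for $x_{t+1}$ are applied.

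The main obstacle I expect is not any single identity but the bookkeeping forced by the one-sidedness of $\con$: it is stable under right multiplication, and under left multiplication only by polynomials in the full sum $X$ — not by the partial sums $X_t$ with $t<m$, nor by the individual $x_i$. Every rewriting must therefore be arranged to act at the left of an expression or on the right, which is why the argument peels factors off one at a time rather than substituting in one step; tracking the error terms this generates, and verifying that the hook-length values of $r_i$ and $s^t_i$ make the resulting scalar identities hold, is the technical core. The auxiliary relation $(\rm{b}_t)$ is exactly what supplies, at each stage, the extra identity needed to keep the $X_t$-degree bounded and let the computation close.
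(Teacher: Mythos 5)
Your overall architecture matches the paper's: the same induction ($(\rm{a}_1)$ as base case, $(\rm{a}_t)\Rightarrow(\rm{b}_t)$, $(\rm{a}_t),(\rm{b}_t)\Rightarrow(\rm{a}_{t+1})$), the same seed relations from (\ref{eq:conxixj}), and your ``central identity'' is literally the paper's displayed computation, since $(x_{t+1}-r_{t+1})+(X_t+h_1)=X_{t+1}+h_{t+1}-s^{t+1}_{t+1}$ and $X_t-L_t=X_t-s^t_t$. Your direct verification of $(X_t+h_1)\cdot P_t\con 0$ by reducing to $(X_t+h_1)(x_1-r_1)\con 0$ and right-multiplying is also correct and is essentially the paper's Lemma on $x_j\cdot P_t\con 0$ and $(x_1+h_1)\cdot P_t\con 0$.

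The gap is in the step you yourself flag as the technical core: matching $\prod_{i<t}(X_{t+1}-s^{t+1}_i)$ against $\prod_{i<t}(X_t-s^t_i)$ so that the central identity can be applied to the trailing factors. Your mechanism --- replace $X_{t+1}$ by $X_t$ in one leading factor at a time, at the cost of an error $x_{t+1}\cdot(\text{tail})$ --- does not resolve the one-sidedness of $\con$. First, pushing $x_{t+1}$ through a factor gives $x_{t+1}(X_{t+1}-s)\con (L_{t+1}-h_{t+1}-s)\,x_{t+1}+l_{t+1}h_{t+1}$: the constant term $l_{t+1}h_{t+1}$ produces leftover summands of the form $(\text{const})\cdot(\text{remaining tail})$ which are not congruent to zero and which your sketch never accounts for; the relation $x_{t+1}(X_t-L_t)\con 0$ only kills the final surviving $x_{t+1}$, not these. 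Second, and more fundamentally, after peeling off $(X_t-s^t_1)$ the remaining reductions must be performed \emph{underneath} that factor, and $X_t-s^t_1$ is not a polynomial in the full sum $X$, so (\ref{eq:PXfconPXg}) does not let you transport them there; the same obstruction you started with reappears at every stage. The paper's missing ingredient is Lemma~\ref{lem:QXtisQXj}: $Q(X_t)\cdot(X_t-s^t_t)\con Q(X)\cdot(X_t-s^t_t)$, proved using that $x_j\cdot R(X_t)\con x_j\cdot R(s^t_t)$ for $j>t$, so that the error incurred in passing from a partial sum to the \emph{full} sum $X$ is annihilated exactly by the trailing factor $(X_t-s^t_t)$. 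Once the leading factors are rewritten as a polynomial in $X$, left multiplication by them is legal, the central identity can be applied under them, and a second application of the same lemma (with $t$ replaced by $t+1$) converts the result back to $Q_{t+1}$. Without this ``round trip through $X$'' (or an equivalent device), neither your step $(\rm{a}_{t+1})$ nor your sketched verification of $Q_{t+1}(X_{t+1})\cdot(X_{t+1}+h_1)\con 0$ closes.
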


Before that, we formulate some preliminary results.

\begin{lemma}\label{lem:annPtQt}
For $2\leq j\leq m$ and $t\geq 1$ we have
\begin{equation}\label{eq:annPt}
\begin{aligned}
 &\mfc_{\mu}(S)\cdot x_j\cdot P_t = 0,\\
 &\mfc_{\mu}(S)\cdot (x_1+h_1)\cdot P_t = 0.
\end{aligned}
\end{equation}
 If $Q(X_t)$ is any polynomial in $X_t$ and $t+1\leq j\leq m$ then
\begin{equation}\label{eq:xjQXt}
 \mfc_{\mu}(S)\cdot x_j\cdot Q(X_t)= \mfc_{\mu}(S)\cdot x_j\cdot Q(s_t^t).
\end{equation}
In particular, for $t+1\leq j\leq m$ we have
\begin{equation}\label{eq:annQt}
 \mfc_{\mu}(S)\cdot x_j\cdot Q_t=0.
\end{equation}
\end{lemma}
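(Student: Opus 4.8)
The plan is to deduce the whole lemma from the two identities in Lemma~\ref{lem:xixj}, together with two numerical observations: $r_1=l_1$ (immediate from \eqref{eq:hookri}) and $s_t^t=l_1+\cdots+l_t$ (the definition in \eqref{eq:defsijQt}).

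I would first treat \eqref{eq:annPt}. Since $P_t=(x_1-r_1)(x_2-r_2)\cdots(x_t-r_t)$ has $(x_1-r_1)$ as its leftmost factor, it suffices to check $\mfc_{\mu}(S)\cdot x_j\cdot(x_1-r_1)=0$ for $2\le j\le m$ and $\mfc_{\mu}(S)\cdot(x_1+h_1)\cdot(x_1-r_1)=0$, and then multiply on the right by $(x_2-r_2)\cdots(x_t-r_t)$. For the first: because $1<j$, the first identity of \eqref{eq:xixj} gives $\mfc_{\mu}(S)\cdot x_j\cdot x_1=\mfc_{\mu}(S)\cdot x_j\cdot l_1$, hence $\mfc_{\mu}(S)\cdot x_j\cdot(x_1-r_1)=(l_1-r_1)\cdot\mfc_{\mu}(S)\cdot x_j=0$. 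For the second: expanding and substituting $\mfc_{\mu}(S)\cdot x_1^2=\mfc_{\mu}(S)\cdot((l_1-h_1)x_1+l_1h_1)$ from the second identity of \eqref{eq:xixj}, the expression $\mfc_{\mu}(S)\cdot(x_1+h_1)(x_1-r_1)$ collapses to $(l_1-r_1)\cdot\mfc_{\mu}(S)\cdot(x_1+h_1)$, which is $0$.

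Next I would prove \eqref{eq:xjQXt}. For $j\ge t+1$ every index $i$ occurring in $X_t=x_1+\cdots+x_t$ satisfies $i<j$, so $\mfc_{\mu}(S)\cdot x_j\cdot x_i=\mfc_{\mu}(S)\cdot x_j\cdot l_i$ by \eqref{eq:xixj}; summing over $i\le t$ yields $\mfc_{\mu}(S)\cdot x_j\cdot X_t=\mfc_{\mu}(S)\cdot x_j\cdot s_t^t$. As $s_t^t$ is a scalar, an immediate induction on the exponent gives $\mfc_{\mu}(S)\cdot x_j\cdot X_t^k=\mfc_{\mu}(S)\cdot x_j\cdot (s_t^t)^k$ for all $k\ge 0$, so \eqref{eq:xjQXt} follows by linearity. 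Finally \eqref{eq:annQt} is immediate: $Q_t$ is a polynomial in $X_t$ having $s_t^t$ among its roots, so $Q_t(s_t^t)=0$ and \eqref{eq:xjQXt} gives $\mfc_{\mu}(S)\cdot x_j\cdot Q_t=\mfc_{\mu}(S)\cdot x_j\cdot Q_t(s_t^t)=0$.

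I do not expect any serious obstacle: the lemma is essentially bookkeeping on top of Lemma~\ref{lem:xixj}. The one point that requires attention is spotting the right numerical coincidences, namely that $r_1=l_1$ (so that the factor $x_1-r_1$ gets annihilated once the quadratic relation for $x_1^2$ is applied) and that $s_t^t$ is exactly the scalar to which each $x_j$ with $j>t$ forces $X_t$ modulo the left annihilator of $\mfc_{\mu}(S)\cdot x_j$. Everything else is routine algebra in $\K[\S_n]$.
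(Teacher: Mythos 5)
Your proof is correct and follows essentially the same route as the paper's: reduce \eqref{eq:annPt} to the leftmost factor $(x_1-r_1)=(x_1-l_1)$ being killed via the two identities of \eqref{eq:xixj}, obtain \eqref{eq:xjQXt} by replacing $X_t$ with the scalar $s_t^t$ iteratively, and conclude \eqref{eq:annQt} from $Q_t$ having $s_t^t$ as a root. The only difference is that you spell out the computations the paper leaves implicit.
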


\begin{proof} (\ref{eq:annPt}) follows from $\mfc_{\mu}(S)\cdot x_j\cdot (x_1-l_1)=0$ for $j\geq 2$, and $\mfc_{\mu}(S)\cdot (x_1+h_1)\cdot (x_1-l_1)=0$, which are special cases of (\ref{eq:xixj}).

Since $j>t$ we have by (\ref{eq:xixj}) that
\[\mfc_{\mu}(S)\cdot x_j\cdot X_t = \mfc_{\mu}(S)\cdot x_j\cdot(x_1+\cdots+x_t) = \mfc_{\mu}(S)\cdot x_j\cdot (l_1+\cdots+l_t) = \mfc_{\mu}(S)\cdot x_j\cdot s_t^t,\]
which when applied iteratively yields (\ref{eq:xjQXt}). (\ref{eq:annQt}) now follows from (\ref{eq:xjQXt}) and the fact that $Q_t$ is a polynomial in $X_t$ divisible by $(X_t-s_t^t)$ (see (\ref{eq:defsijQt})).
\end{proof}

\begin{lemma}\label{lem:annXPtQt}
For $2\leq j\leq m$ and $t\geq 1$ we have
\begin{equation}\label{eq:annXPt}
\begin{aligned}
 & x_j\cdot P_t \con 0,\\
 & (x_1+h_1)\cdot P_t \con 0.
\end{aligned}
\end{equation}
 If $Q(X_t)$ is any polynomial in $X_t$ and $t+1\leq j\leq m$ then
\begin{equation}\label{eq:conxjQXt}
 x_j\cdot Q(X_t) \con x_j\cdot Q(s_t^t).
\end{equation}
In particular, for $t+1\leq j\leq m$ we have
\begin{equation}\label{eq:annXQt}
 x_j\cdot Q_t\con 0.
\end{equation}
\end{lemma}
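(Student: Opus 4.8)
The plan is to obtain Lemma~\ref{lem:annXPtQt} as the immediate ``$\con$-shadow'' of Lemma~\ref{lem:annPtQt}, using the implication \eqref{eq:conbyanncS}: whenever $\mfc_{\mu}(S)\cdot f = \mfc_{\mu}(S)\cdot g$ one has $f\con g$. No new computation is needed beyond the hook-length identities \eqref{eq:xixj} that already went into Lemma~\ref{lem:annPtQt}; this lemma is just their reformulation modulo $\con$.

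Concretely, I would argue term by term. For the two relations in \eqref{eq:annXPt}, apply \eqref{eq:conbyanncS} with $f=x_j\cdot P_t$ (resp.\ $f=(x_1+h_1)\cdot P_t$) and $g=0$: the identities \eqref{eq:annPt} give $\mfc_{\mu}(S)\cdot f = 0 = \mfc_{\mu}(S)\cdot g$, hence $f\con 0$. For \eqref{eq:conxjQXt}, apply \eqref{eq:conbyanncS} with $f=x_j\cdot Q(X_t)$ and $g=x_j\cdot Q(s_t^t)$: by \eqref{eq:xjQXt} these satisfy $\mfc_{\mu}(S)\cdot f = \mfc_{\mu}(S)\cdot g$, so $f\con g$. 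Finally \eqref{eq:annXQt} is either the special case $Q=Q_t$ of \eqref{eq:conxjQXt} (since $Q_t$ is a polynomial in $X_t$ divisible by $X_t - s_t^t$, so $Q_t(s_t^t)=0$), or, directly, the application of \eqref{eq:conbyanncS} to \eqref{eq:annQt}.

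I do not expect a genuine obstacle here: the substance lies entirely in Lemma~\ref{lem:annPtQt}, and the present lemma merely transports those identities across the passage $\mfc_{\mu}(S)\cdot(-)=\mfc_{\mu}(S)\cdot(-)\Longrightarrow(-)\con(-)$. The one point worth keeping in mind is that $\con$ is only a \emph{right} congruence, so each assertion must be set up as a left-multiple of $P_t$ or $Q(X_t)$ --- which is exactly how both Lemma~\ref{lem:annPtQt} and this lemma are phrased --- so that we never need to move a factor past $\con$ from the left.
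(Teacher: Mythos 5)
Your proposal is correct and is exactly the paper's argument: the paper's proof of this lemma is the one-line observation that it follows from Lemma~\ref{lem:annPtQt} via the implication \eqref{eq:conbyanncS}, which is precisely what you spell out term by term. Your remark about $\con$ being only a right congruence (so everything must be phrased as a left multiple) is a correct and worthwhile point of care, consistent with how the paper sets things up.
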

 
\begin{proof} This follows from Lemma \ref{lem:annPtQt} using (\ref{eq:conbyanncS}).
\end{proof}

\begin{lemma}\label{lem:QXtisQXj}
 If $Q$ is a polynomial then
\begin{equation}\label{eq:QXtisQXj}
Q(X_t)\cdot (X_t-s_t^t)\con Q(X)\cdot (X_t-s_t^t).
\end{equation}
\end{lemma}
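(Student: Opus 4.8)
The plan is to exploit that $Y:=X-X_t=x_{t+1}+\cdots+x_m$ effectively annihilates the factor $(X_t-s_t^t)$ modulo $\con$, so that inside an expression of the form (polynomial in $X_t$)$\,\cdot\,(X_t-s_t^t)$ one may freely trade $X$ for $X_t$. Concretely, I would first establish the auxiliary fact: \emph{for every polynomial $R$ with $R(s_t^t)=0$ one has $Y\cdot R(X_t)\con 0$}. This is immediate from (\ref{eq:conxjQXt}): writing $Y\cdot R(X_t)=\sum_{j=t+1}^m x_j\cdot R(X_t)$, each summand satisfies $x_j\cdot R(X_t)\con x_j\cdot R(s_t^t)=0$, and $\con$ is additive. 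Applying this with $R(y)=Q(y)\cdot(y-s_t^t)$ and substituting $X=X_t+Y$ then gives, for every polynomial $Q$,
\[
X\cdot Q(X_t)\cdot(X_t-s_t^t)=X_t\cdot Q(X_t)\cdot(X_t-s_t^t)+Y\cdot\bigl(Q(X_t)(X_t-s_t^t)\bigr)\con X_t\cdot Q(X_t)\cdot(X_t-s_t^t).
\]

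Next I would bootstrap this to higher powers of $X$, proving by induction on $p\ge 0$ that for every polynomial $Q$
\[
X^p\cdot Q(X_t)\cdot(X_t-s_t^t)\con X_t^p\cdot Q(X_t)\cdot(X_t-s_t^t).
\]
The base case $p=0$ is trivial. For $p\ge 1$ I would write the left side as $X^{p-1}\cdot\bigl(X\cdot Q(X_t)(X_t-s_t^t)\bigr)$, replace the bracketed factor by $X_t\cdot Q(X_t)\cdot(X_t-s_t^t)$ using the displayed congruence, carry this replacement through the remaining left factor $X^{p-1}$ by (\ref{eq:PXfconPXg}), and finally invoke the induction hypothesis for $p-1$ applied to the polynomial $y\mapsto y\,Q(y)$ (note that $X_t\,Q(X_t)$ is again a polynomial in $X_t$). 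Specializing the inner polynomial to $Q\equiv 1$ and using linearity of $\con$ then yields, for $Q(y)=\sum_p c_p\,y^p$,
\[
Q(X)\cdot(X_t-s_t^t)=\sum_p c_p\,X^p(X_t-s_t^t)\con\sum_p c_p\,X_t^p(X_t-s_t^t)=Q(X_t)\cdot(X_t-s_t^t),
\]
which is the assertion of the lemma.

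The only point requiring care is the asymmetry of the congruence $\con$: it is \emph{not} stable under left multiplication by arbitrary elements of $\K[\S_n]$, but only under left multiplication by polynomials in $X$, via (\ref{eq:PXfconPXg}). Hence in the induction one must always peel an $X$ off the \emph{leftmost} position and apply (\ref{eq:PXfconPXg}) there, rather than attempting to rewrite in the interior of the product; once this organizing principle is fixed, the remaining steps are routine manipulations with (\ref{eq:conxjQXt}) and additivity of $\con$.
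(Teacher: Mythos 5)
Your argument is correct and is essentially the paper's proof: both proceed by induction on the power of $X$, using the decomposition $X=X_t+\sum_{j>t}x_j$, the left-multiplication stability (\ref{eq:PXfconPXg}), and the annihilation $x_j\cdot R(X_t)\con x_j\cdot R(s_t^t)=0$ from (\ref{eq:conxjQXt}) for $R$ vanishing at $s_t^t$. The only cosmetic difference is that the paper proves the monomial case $X^i\cdot(X_t-s_t^t)\con X_t^i\cdot(X_t-s_t^t)$ directly and concludes by linearity, whereas you carry the polynomial $Q$ through the induction hypothesis; the two bookkeeping schemes are equivalent.
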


\begin{proof} We prove by induction on $i$ that $X^i\cdot(X_t-s_t^t)\con X_t^i\cdot(X_t-s_t^t)$, the result being trivial for $i=0$. Assuming that the result is true for some $i$ and multiplying both sides by $X$, we get using (\ref{eq:PXfconPXg}) that
\[X^{i+1}\cdot(X_t-s_t^t)\con X\cdot X_t^i\cdot(X_t-s_t^t) = X_t^{i+1}\cdot(X_t-s_t^t) + \sum_{j=t+1}^m x_j\cdot X_t^i\cdot(X_t-s_t^t)\overset{(\ref{eq:conxjQXt})}{\con}X_t^{i+1}\cdot(X_t-s_t^t).\]
\end{proof}

\begin{proof}[Proof of Lemma \ref{lem:relations}]
 We do induction on $t$, showing that $(\rm{a}_t)\Rightarrow (\rm{b}_t)$ and $(\rm{a}_t),(\rm{b}_t)\Rightarrow(\rm{a}_{t+1})$. When $t=1$, we have $r_1=s_1^1=l_1$ so $P_1=Q_1=x_1-l_1$ and $(\rm{a}_1)$ holds.

\noindent$\ul{(\rm{a}_t)\Rightarrow (\rm{b}_t)}$: We have
\[P_t\cdot(X_t+h_1)\overset{(\rm{a}_t)}{\con} Q_t\cdot(X_t+h_1)=(X_t+h_1)\cdot Q_t\overset{(\ref{eq:annXQt})}{\con}(X+h_1)\cdot Q_t\overset{(\ref{eq:PXfconPXg})}{\underset{(\rm{a}_t)}{\con}}(X+h_1)\cdot P_t\overset{(\ref{eq:annXPt})}{\con}0.\]

\noindent$\ul{(\rm{a}_t),(\rm{b}_t)\Rightarrow (\rm{a}_{t+1})}$: We have
\begin{equation}\label{eq:redPtplus1}
\begin{aligned}
 P_{t+1}=P_t\cdot(x_{t+1}-r_{t+1}) &\overset{(\rm{b}_t)}{\con}P_t\cdot(x_{t+1}-r_{t+1}+X_t+h_1) \\
& \overset{(\ref{eq:hookri}),(\ref{eq:defsijQt})}{=} P_t\cdot(X_{t+1}+h_{t+1}-s_{t+1}^{t+1}) \overset{(\rm{a}_t)}{\con}Q_t\cdot(X_{t+1}+h_{t+1}-s_{t+1}^{t+1}).
\end{aligned}
\end{equation}
We have $Q_t=Q(X_t)\cdot(X_t-s_t^t)$, where $Q$ is the polynomial $Q(z)=\prod_{i=1}^{t-1}(z-s^t_i)$. By (\ref{eq:QXtisQXj}), $Q_t\con Q(X)\cdot(X_t-s_t^t)$, so (\ref{eq:redPtplus1}) becomes
\begin{equation}\label{eq:redPtmore}
P_{t+1}\con Q(X)\cdot(X_t-s_t^t)\cdot(X_{t+1}+h_{t+1}-s_{t+1}^{t+1}). 
\end{equation}
We have
\[
\begin{split}
 &(X-s^{t+1}_t)\cdot(X_{t+1}-s^{t+1}_{t+1}) \overset{(\ref{eq:QXtisQXj})}{\con} (X_{t+1}-s^{t+1}_t)\cdot(X_{t+1}-s^{t+1}_{t+1}) \\
= &(X_t-s_t^t)\cdot(X_{t+1}-s^{t+1}_{t+1})+(x_{t+1}+h_{t+1})\cdot(X_{t+1}-s^{t+1}_{t+1})\\
= &(X_t-s_t^t)\cdot(X_{t+1}-s^{t+1}_{t+1})+(x_{t+1}+h_{t+1})\cdot(X_t-s_t^t)+(x_{t+1}+h_{t+1})\cdot(x_{t+1}-l_{t+1})\\
 \overset{(\ref{eq:conxixj})}{\underset{(\ref{eq:conxjQXt})}{\con}} &(X_t-s_t^t)\cdot(X_{t+1}-s^{t+1}_{t+1})+h_{t+1}\cdot(X_t-s_t^t)= (X_t-s_t^t)\cdot(X_{t+1}+h_{t+1}-s_{t+1}^{t+1}).\\
\end{split}
\]
By (\ref{eq:PXfconPXg}) this chain of congruences is preserved if we multiply on the left by $Q(X)$, so (\ref{eq:redPtmore}) is equivalent to
\begin{equation}\label{eq:reducedPt}
P_{t+1}\con Q(X)\cdot(X-s^{t+1}_t)\cdot(X_{t+1}-s^{t+1}_{t+1}).
\end{equation}
Using (\ref{eq:QXtisQXj}) with $t$ replaced by $(t+1)$ and $Q(X)$ replaced by $Q(X)\cdot(X-s^{t+1}_t)$, and using the fact that $s_i^t=s_i^{t+1}$ for $i\leq t-1$, we obtain
\[P_{t+1}\con Q(X_{t+1})\cdot(X_{t+1}-s^{t+1}_t)\cdot(X_{t+1}-s^{t+1}_{t+1})=Q_{t+1},\]
which concludes the proof of Lemma \ref{lem:relations} and that of Theorem \ref{thm:yngconsec}.
\end{proof}

\section{Proof of Theorem \ref{thm:main}}\label{sec:proofThm1}

We prove Theorem \ref{thm:main} by induction on the difference $(n-k)$. When $n-k=0$, the theorem follows from the quasi-idempotence of Young symmetrizers (\ref{eq:idempyngsymm}). When $n-k=1$, the theorem is a consequence of Theorem \ref{thm:yngconsec} (Remark \ref{rem:prodconsec}). We may thus assume that $n-k\geq 2$.

We consider the subtableau $U$ of $T$ obtained by removing the rightmost corner box of $T$ not contained in $S$. More precisely, $U$ is obtained by removing the box in position $(u,v)$ where $v=\max\{j:\ll'_j\neq\mu'_j\}$ and $u=\ll'_v$. We write $\delta$ for the shape of $U$. For example take $n=8$, $k=4$, $\ll=(3,2,2,1)$, $\mu=(3,1)$, in which case $(u,v)=(3,2)$ and $\delta=(3,2,1,1)$:
\[\Yvcentermath1 T=\young(123,45,67,8),\quad S=\young(123,4),\quad U=\young(123,45,6,8),\]

We have by induction
\begin{equation}\label{eq:cdcm}
 \mfc_{\delta}(U)\cdot \mfc_{\mu}(S)=\mfc_{\delta}(U)\cdot\left(\sum_{\s\in L(U;S)}m_{\s}\cdot \s\right)
\end{equation}
with $m_{\1}=\a_{\mu}$. By Theorem \ref{thm:yngconsec}
\begin{equation}\label{eq:clcd}
 \mfc_{\ll}(T)\cdot \mfc_{\delta}(U) = \mfc_{\ll}(T)\cdot\left(\sum_{\t\in L(T;U)}n_{\t}\cdot \t\right)
\end{equation}
where $n_{\1}=\a_{\delta}$, and moreover $n_{\t}\neq 0$ only for permutations $\t$ that fix the entries in the first $v$ columns of $U$. By the choice of $(u,v)$, all entries of $U$ in the columns $v+1,v+2,\cdots$ belong to $S$, so the permutations $\t$ appearing in (\ref{eq:clcd}) fix all the entries of $U$ outside $S$. Combining (\ref{eq:cdcm}) with (\ref{eq:clcd}) we obtain
\begin{equation}\label{eq:newclcdcm}
\mfc_{\ll}(T)\cdot \mfc_{\delta}(U)\cdot \mfc_{\mu}(S)=\mfc_{\ll}(T)\cdot\left(\sum_{\t\in L(T;U)}n_{\t}\cdot \t\right)\cdot\left(\sum_{\s\in L(U;S)}m_{\s}\cdot \s\right), 
\end{equation}
It is clear that if $\t\in L(T;U)$ and $\s\in L(U;S)$ then for every $s\in S$ either $(\t\cdot\s)(s)=s$ or $(\t\cdot\s)(s)$ lies strictly to the left of $s$. Assume now that $(\t\cdot\s)(s)=s$ for all entries $s$ in $S$. Since every $\t$ appearing with non-zero coefficient in (\ref{eq:clcd}) fixes the entries of $U$ outside $S$, it must be that $\s$ permutes the entries of $S$, but then the definition of $L(U;S)$ yields $\s=\1$. It follows that $\t$ fixes the entries of $S$ along with those of $U\setminus S$, so $\t=\1$.

We claim that
\begin{equation}\label{eq:clcdcm}
 \mfc_{\ll}(T)\cdot \mfc_{\mu}(S)=\frac{1}{\a_{\delta}}\cdot \mfc_{\ll}(T)\cdot \mfc_{\delta}(U)\cdot \mfc_{\mu}(S). 
\end{equation}
To prove (\ref{eq:clcdcm}) it suffices to show that for every $\t\neq\1$ appearing in (\ref{eq:clcd}) with non-zero coefficient we have $\mfc_{\ll}(T)\cdot\t\cdot \mfc_{\mu}(S)=0$, or equivalently (after multiplying on the right by $\t^{-1}$) $\mfc_{\ll}(T)\cdot\mfc_{\mu}(\t\cdot S)=0$. Since $\t\neq\1$, there exists an entry $s$ in $S$ such that $\t(s)=a$ is the unique entry of $T$ outside $U$ ($s=b_1$ in the notation of Remark \ref{rem:prodconsec}). If we write $(i,j)$ for the coordinates of $s$, we have $j>v$. Let $s'=S(i,v)$ and note that $\t(s')=s'$. We have that $a$ and $s'$ are both contained in $C_v(T)$ and in $R_i(\t\cdot S)$, which forces $\mfc_{\ll}(T)\cdot\mfc_{\mu}(\t\cdot S)=0$, as desired. The conclusion of Theorem~\ref{thm:main} now follows by expanding~(\ref{eq:newclcdcm}) and using~(\ref{eq:clcdcm}).

\section*{Acknowledgments} 
I would like to thank Steven Sam and John Stembridge for kindly answering my questions about the project, and the anonymous referees for many helpful suggestions. Experiments with the computer algebra softwares Macaulay2 \cite{M2} and Sage \cite{sage} have provided numerous valuable insights.

	%%%%%%%%%%%%%%%%%%%%%%%%%%%%%%%%%%%%%%%%%%%%%%%%%%%%%%%%%%%%%%%%%%%%%%%%
	%%%%%%%%%%%%%%%   		Bibliography				%%%%%%%%%%%%%%%%%%%%
	%%%%%%%%%%%%%%%%%%%%%%%%%%%%%%%%%%%%%%%%%%%%%%%%%%%%%%%%%%%%%%%%%%%%%%%%

	\begin{bibdiv}
		\begin{biblist}

\bib{bru-con-var}{article}{
    author = {Bruns, Winfried},
    author = {Conca, Aldo},
    author = {Varbaro, Matteo},
     TITLE = {Relations between the minors of a generic matrix},
      YEAR = {2011},
      journal = {arXiv},
      number = {1111.7263},
}

\bib{ful-har}{book}{
   author={Fulton, William},
   author={Harris, Joe},
   title={Representation theory},
   series={Graduate Texts in Mathematics},
   volume={129},
   note={A first course;
   Readings in Mathematics},
   publisher={Springer-Verlag},
   place={New York},
   date={1991},
   pages={xvi+551},
   isbn={0-387-97527-6},
   isbn={0-387-97495-4},
   review={\MR{1153249 (93a:20069)}},
}

\bib{fulton}{book}{
   author={Fulton, William},
   title={Young tableaux},
   series={London Mathematical Society Student Texts},
   volume={35},
   note={With applications to representation theory and geometry},
   publisher={Cambridge University Press},
   place={Cambridge},
   date={1997},
   pages={x+260},
   isbn={0-521-56144-2},
   isbn={0-521-56724-6},
   review={\MR{1464693 (99f:05119)}},
}

\bib{M2}{article}{
          author = {Grayson, Daniel R.},
          author = {Stillman, Michael E.},
          title = {Macaulay 2, a software system for research
                   in algebraic geometry},
          journal = {Available at \url{http://www.math.uiuc.edu/Macaulay2/}}
        }

\bib{james}{book}{
   author={James, G. D.},
   title={The representation theory of the symmetric groups},
   series={Lecture Notes in Mathematics},
   volume={682},
   publisher={Springer},
   place={Berlin},
   date={1978},
   pages={v+156},
   isbn={3-540-08948-9},
   review={\MR{513828 (80g:20019)}},
}

\bib{landsberg}{book}{
   author={Landsberg, J. M.},
   title={Tensors: geometry and applications},
   series={Graduate Studies in Mathematics},
   volume={128},
   publisher={American Mathematical Society},
   place={Providence, RI},
   date={2012},
   pages={xx+439},
   isbn={978-0-8218-6907-9},
   review={\MR{2865915}},
}

\bib{oed-rai}{article}{
    AUTHOR = {Oeding, Luke},
    AUTHOR = {Raicu, Claudiu},
     TITLE = {Tangential Varieties of Segre--Veronese Varieties},
      YEAR = {2011},
      journal = {arXiv},
      number = {1111.6202},
}

\bib{olver}{article}{
    AUTHOR = {Olver, Peter},
     TITLE = {Differential Hyperforms I},
      YEAR = {1982},
      journal={University of Minnesota Mathematics Report},
      number={82-101},
      eprint={http://www.math.umn.edu/~olver/}
}

\bib{rai-GSS}{article}{
    AUTHOR = {Raicu, Claudiu},
     TITLE = {Secant varieties of Segre--Veronese varieties},
     journal={Algebra \& Number Theory},
     volume={6},
     date={2012},
     number={8},
     pages={1817--1868},
%     doi={10.2140/ant.2012.6.1817},
}

\bib{sam}{article}{
   author={Sam, Steven V.},
   title={Computing inclusions of Schur modules},
   journal={J. Softw. Algebra Geom.},
   volume={1},
   date={2009},
   pages={5--10},
   issn={1948-7916},
   review={\MR{2878669}},
}

\bib{sam-snowden}{article}{
    AUTHOR = {Sam, Steven},
    AUTHOR = {Snowden, Andrew},
     TITLE = {Introduction to twisted commutative algebras},
      YEAR = {2012},
       journal = {arXiv},
       number = {1209.5122},
}
% \bib{sam-snowden}{article}{
%     AUTHOR = {Sam, Steven},
%     AUTHOR = {Snowden, Andrew},
%      TITLE = {GL--equivariant modules over polynomial rings in infinitely many variables},
%       YEAR = {2012},
%       journal = {arXiv},
%       number = {1206.2233},
% }

\bib{sage}{article}{
  Author       = {W.\thinspace{}A. Stein and others},
  Organization = {The Sage Development Team},
  Title        = {{S}age {M}athematics {S}oftware ({V}ersion 5.4.1)},
  note         = {{\tt http://www.sagemath.org}},
  Year         = {2012},
}

\bib{weyman}{book}{
   author={Weyman, Jerzy},
   title={Cohomology of vector bundles and syzygies},
   series={Cambridge Tracts in Mathematics},
   volume={149},
   publisher={Cambridge University Press},
   place={Cambridge},
   date={2003},
   pages={xiv+371},
   isbn={0-521-62197-6},
   review={\MR{1988690 (2004d:13020)}},
   doi={10.1017/CBO9780511546556},
}

		\end{biblist}
	\end{bibdiv}

\end{document}